 \DeclareMathOperator{\acl}{acl}
\DeclareMathOperator{\dcl}{dcl}
 \DeclareMathOperator{\tp}{tp}
\DeclareMathOperator{\stp}{stp}
\newtheorem*{nthm}{Theorem}
\newtheorem{theorem}{Theorem}[section]
\newtheorem{claim}[theorem]{Claim}
\newtheorem{conjecture}[theorem]{Conjecture}
\newtheorem{corollary}[theorem]{Corollary}
\newtheorem{fact}[theorem]{Fact}
\newtheorem{lemma}[theorem]{Lemma}
\newtheorem{proposition}[theorem]{Proposition}
\theoremstyle{definition}
\newtheorem{definition}[theorem]{Definition}
\newtheorem{remark}[theorem]{Remark}
\newtheorem{question}[theorem]{Question}
\newcommand{\Qq}{{\mathds{Q}}}
\newcommand{\CN}{{\mathcal N}}
\newcommand{\CM}{{\mathcal M}}
\newcommand{\CF}{{\mathcal F}}
\newcommand{\x}{\bar x}
\newcommand{\0}{\emptyset}
\newcommand{\vphi}{\varphi}
\renewcommand{\phi}{\varphi}
\renewcommand{\a}{\bar a}
\renewcommand{\b}{\bar b}
\renewcommand{\x}{\bar x}
\long\def\symbolfootnote[#1]#2{\begingroup%
\def\thefootnote{\fnsymbol{footnote}}\footnote[#1]{#2}\endgroup}
\def\Ind#1#2{#1\setbox0=\hbox{$#1x$}\kern\wd0\hbox to 0pt{\hss$#1\mid$\hss}
\lower.9\ht0\hbox to 0pt{\hss$#1\smile$\hss}\kern\wd0}
\def\ind{\mathop{\mathpalette\Ind{}}}
\def\Notind#1#2{#1\setbox0=\hbox{$#1x$}\kern\wd0\hbox to 0pt{\mathchardef
\nn=12854\hss$#1\nn$\kern1.4\wd0\hss}\hbox to
0pt{\hss$#1\mid$\hss}\lower.9\ht0 \hbox to
0pt{\hss$#1\smile$\hss}\kern\wd0}
\def\nind{\mathop{\mathpalette\Notind{}}}
\def\thind{\mathop{\mathpalette\Ind{}}^{\text{\th}} }
\def\nthind{\mathop{\mathpalette\Notind{}}^{\text{\th}} }
\def\uth{{\rm{U}^{\text{\th}}}}
\def\ur{{\rm{U}}}
\def\tho{\text{\th}}
\DeclareMathOperator{\Tho}{\tho}
\title{Minimal types in super-dependent theories}
\author[A. HASSON]{Assaf Hasson$^*$}
\thanks{$^*$Supported by the EPSRC grant no. EP C52800X 1}
\address{$^*$Mathematical Institute\\
Oxford University\\
Oxford\\
UK} \email{hasson@maths.ox.ac.uk} \urladdr{http://www.maths.ox.ac.uk/\textasciitilde hasson/}
\date{\today}
\author[A. Onshuus]{Alf Onshuus}
\address{Universidad de los Andes,
Departemento de Matem\'aticas, Cra. 1 No 18A-10, Bogot\'{a},
Colombia} \email{onshuus@gmail.com}
\begin{document}

\begin{abstract}
We give necessary and sufficient geometric conditions for a theory
definable in an o-minimal structure to interpret a real closed
field. The proof goes through an analysis of \th-minimal types in
super-rosy dependent theories of finite rank. We prove that such
theories are coordinatised by \th-minimal types and that such a
type is unstable if an only if every non-algebraic extension
thereof is. We conclude that a type is stable if and only if it
admits a coordinatisation in \th-minimal stable types. We also
show that non-trivial \th-minimal stable types extend stable sets.
\end{abstract}

\maketitle

\section{introduction}
In this paper we complete the geometric classification of unstable
structures definable in o-minimal theories started in \cite{HaOn}, giving necessary and
sufficient geometric conditions for such a theory to interpret a
real closed field. We also analyse the analogous problem in the
stable case, reducing it to Zilber's Trichotomy
conjecture for strongly minimal sets interpretable in o-minimal
theories. Our main result can be summarised by:
\begin{nthm}Let $\CM$ be a structure definable in an o-minimal theory. Then:
\begin{enumerate}
 \item  $\CM$ interprets a real closed field if and only if there is
 a minimal non-locally modular (non trivial) global type.

\item Assuming Zilber's Trichotomy conjecture $\CM$ interprets a
pure algebraically closed field if and only if it has a minimal
non-locally modular stable type. \item If $p\in S(M)$ is minimal,
non-trivial and locally modular then for some $\vphi(x)\in p$ and
$\CM$-definable equivalence relation $E$ with finite classes the
induced structure on $\vphi(x)/E$ is interpretable in an (ordered,
if $p$ is unstable) vector space over an (ordered) division ring.
\end{enumerate}
\end{nthm}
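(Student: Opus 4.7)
The plan is to reduce each clause to the coordinatisation by \th-minimal types established in the earlier sections, combined with the group configuration theorem and, where needed, the main theorem of \cite{HaOn} or Zilber's trichotomy. We use throughout that any $\CM$ definable in an o-minimal theory is super-rosy, dependent and of finite \th-rank, so that any minimal type is \th-minimal, the stability dichotomy for \th-minimal types applies, and the fact that non-trivial \th-minimal stable types extend stable sets is available.

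For clause (1), the forward direction is immediate: a real closed field interpretable in $\CM$ yields a minimal non-trivial non-locally-modular global type. For the converse, let $p$ be such a type. The stability dichotomy for \th-minimal types forces $p$ to be either stable or everywhere unstable. The stable case reduces to clause (2), after which a real closed field is recovered from the algebraically closed field so produced by passing to its o-minimal ordered hull. In the unstable case, non-local-modularity and non-triviality supply the hypotheses of the \th-version of the group configuration theorem, producing an infinite type-definable group $G$ of \th-rank $1$ internal to $p$. Since $\CM$ lies inside an o-minimal structure, $G$ inherits a definable linear order, placing it within the scope of the main theorem of \cite{HaOn}, and a real closed field is thereby interpreted.

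For clause (2), the forward direction is standard. For the converse, given $p$ minimal, non-locally-modular and stable, the theorem that non-trivial \th-minimal stable types extend stable sets (non-triviality being automatic from non-local-modularity) embeds $p$ generically into a strongly minimal set $D$ interpretable in $\CM$. Zilber's trichotomy on $D$, together with the non-local-modularity inherited by $D$, excludes the trivial and affine-module outcomes and produces a pure algebraically closed field interpretable in $\CM$. For clause (3), local modularity and non-triviality are the input to the \th-version of the group configuration theorem, yielding a type-definable abelian group $G$ internal to $p$ and a division ring $D$ of definable endomorphisms of $G$ so that $p$ is (up to finite correspondence) the generic type of a $D$-module. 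Choosing $\vphi(x)\in p$ and letting $E$ be the finite equivalence relation identifying $p$-realisations with their images in $G$ yields the claimed interpretation of the structure on $\vphi(x)/E$ in the $D$-vector space $G$. When $p$ is unstable, the ambient o-minimal order descends to a definable linear order on $G$ compatible with the module structure, giving the stated ordered vector space over an ordered division ring.

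The main obstacle is the unstable branch of clause (1). In \cite{HaOn} the field-interpretation step began with a group already assumed to be definable; here one must first manufacture this group from an abstract \th-minimal global type via a group configuration argument in the super-rosy, unstable, finite \th-rank setting, and then verify that the resulting group is sufficiently generic for the field-recovery theorem of \cite{HaOn} to apply rather than yielding only the weaker vector-space-over-division-ring structure of clause (3). A secondary concern is keeping the stable and unstable branches of clause (1) formally independent of Zilber's trichotomy, which is reserved for clause (2); in particular the reduction of the stable case of (1) to (2) must be done in the direction ``RCF from ACF plus o-minimality'' rather than by re-invoking the trichotomy inside clause (1).
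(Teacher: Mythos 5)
Your treatment of the stable branches is essentially the paper's: Theorem \ref{mainstable} (non-trivial stable minimal types are seriously stable, hence strongly minimal when non-locally modular, by Buechler) followed by Zilber's trichotomy for clause (2), and the adapted Hrushovski group recognition for the stable locally modular part of clause (3). But your unstable branches rest on a step that is not available and that the paper is expressly built to avoid: you invoke a ``\th-version of the group configuration theorem'' for an unstable \th-minimal type in a super-rosy theory. No such theorem is proved or cited in the paper; even the stable case requires a careful adaptation (Lemmas \ref{type change} and \ref{final type}, Propositions \ref{ggroup} and \ref{loc mod}) that depends essentially on stability --- definability of types, stationarity over algebraically closed sets, well-defined germs --- none of which survive in the unstable setting. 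You flag this yourself as ``the main obstacle'', but it is not an obstacle the paper overcomes; it is a route the paper does not take. The actual argument for the unstable case is: by Theorem \ref{unstable to stable} every stable extension of a minimal unstable type \th-forks, hence is algebraic, so by Theorem 2 of \cite{HaOn} the type is almost o-minimal (Proposition \ref{o-min}); Lemma \ref{embedded} then transfers richness and non-triviality to the induced o-minimal structure on the almost extension, and Remark \ref{rich} together with the Peterzil--Starchenko Trichotomy theorem delivers the real closed field (rich case) or the ordered vector space over an ordered division ring (locally modular case). No group is manufactured at any point in the unstable branch.

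A second concrete error: in the stable branch of clause (1) you propose to recover a real closed field from the algebraically closed field produced by clause (2) ``by passing to its o-minimal ordered hull''. An algebraically closed field does not interpret a real closed field, and the real closed field of which that ACF is the algebraic closure lives in the ambient o-minimal structure, not necessarily in $\CM$ (consider $\CM=(\Cc,+,\cdot)$ definable in $(\Rr,+,\cdot)$). So this reduction cannot be carried out inside $\CM$; in the paper the real closed field of clause (1) is obtained from the rich unstable case via the o-minimal trichotomy, not from the ACF of clause (2). Finally, ``the main theorem of \cite{HaOn}'' is not a field-recovery theorem for definable groups: it is the dichotomy that an unstable type has either an almost o-minimal extension or a non-algebraic stable extension, which is the input to Proposition \ref{o-min}, not a substitute for the group configuration step your argument needs.
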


It turns out that the right context for treating these questions is
that of super-rosy dependent (super-dependent, for short) theories
of finite rank. Motivated by the (by now classical) work on
superstable theories of finite rank our first goal is to develop a
theory for \th-minimal types.  This approach builds heavily on the
fact that super-dependent theories of finite rank admit a
coordinatisation in minimal types (Theorem \ref{coordinatization}).

The first task, taken care of in Section \ref{stable section}, is
to develop an appropriate notion of stability. As easy examples
demonstrate (see e.g. \cite{HaOn}) stability of definable sets is
too strong a notion to be of much use in the present setting,
and has to be identified on the level of types.
\emph{Hereditarily stable types}, introduced in \cite{HaOn}
(though in the context of dependent theories they coincide with
the stable types of Lascar and Poizat in \cite{LaPo}), seem to
give the right dividing line. The key to our analysis is the
observation that for hereditarily stable types forking coincides
with \th-forking. The technical key to the present paper is,
however:

\begin{nthm}
Let $p(x,a)\in S(Aa)$ be a stable type in a model of a dependent
rosy theory and assume that $p(x):=p(x,a)\upharpoonright_A$ is
unstable. Then $p(x,a)$ \th-forks over $A$.
\end{nthm}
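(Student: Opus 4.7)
The plan is to proceed by contraposition: assume that $p(x,a)$ does \emph{not} \th-fork over $A$, and derive that $p(x)$ must then be stable, contradicting the hypothesis.

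The first move is to upgrade ``non-\th-forking'' to ``non-forking''. Here I invoke the key observation already highlighted in the introduction: for hereditarily stable types in dependent theories, forking and \th-forking coincide. Applied to the stable type $p(x,a)$, the assumption immediately gives that $p(x,a)$ does not fork over $A$, so that $p(x,a)$ is an ordinary non-forking extension of $p(x)$.

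The main step is then to prove, inside a dependent theory, that stability in the Lascar--Poizat sense is preserved \emph{downwards} along non-forking extensions. Suppose for contradiction $p(x)$ is unstable and fix a witness: a formula $\vphi(x,y)$ over $A$ and, after Ramsey plus compactness, an $A$-indiscernible sequence $(c_i,d_i)_{i<\omega}$ with $c_i\models p(x)$ and $\vphi(c_i,d_j)\leftrightarrow i<j$. Choose $a'\equiv_A a$ forking-independent from $(c_i,d_i)_{i<\omega}$ over $A$; by NIP the sequence is still $Aa'$-indiscernible, so all $c_i$ share a common type $\pi(x)\in S(Aa')$ which is a non-forking extension of $p(x)$. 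The goal is to identify $\pi$ with $p(x,a')$, whereupon the $c_i$ realise $p(x,a')$ and $\vphi$ witnesses the order property inside the stable type $p(x,a')\equiv_A p(x,a)$---the desired contradiction.

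The crux, and the main obstacle I foresee, is exactly the identification $\pi=p(x,a')$: one has to show that the stable non-forking extension of $p(x)$ to $Aa'$ is unique (a dependent-theoretic analogue of stationarity). This is where hereditary stability of $p(x,a)$ enters essentially. The idea is to use the rigidity of non-forking extensions of stable types in NIP---controlled by Lascar strong types and by the fact, underlying the Lascar--Poizat framework, that a stable type in a dependent theory has a bounded and well-behaved class of non-forking extensions over any independent parameter set---so that any two stable non-forking extensions of $p(x)$ to $Aa'$ agree. Once this step is secured, the transfer of the order property from $p(x)$ to $p(x,a')$ is automatic and the proof is complete.
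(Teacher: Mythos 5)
Your first step is where the proof breaks down. The ``key observation'' that forking and \th-forking coincide for stable types is Lemma \ref{th-forking extensions}, and its hypothesis is that the \emph{base} type over $A$ is stable: the proof needs Claim \ref{NOP} to produce a formula $\theta(x)\in p(x)$ \emph{over $A$} so that $\theta(x)\wedge\vphi(x,y)$ has NOP and Fact \ref{5.1.1} applies. In the present situation $p(x)=p(x,a)\upharpoonright_A$ is unstable by hypothesis, so Claim \ref{NOP} applied to the stable type $p(x,a)$ only yields $\theta$ over $Aa$, which says nothing about formulas witnessing forking over $A$. This is exactly the trap flagged in the remark following Lemma \ref{non thorking definable}. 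Passing from ``does not \th-fork over $A$'' to ``does not fork over $A$'' is the real content of the theorem; the paper obtains it only indirectly, by first proving (Lemma \ref{non thorking definable}, via Fact \ref{4.1.11} and the finite-class equivalence relation $\simeq$) that a stable non-\th-forking extension is definable over $\acl(A)$, and deducing non-forking from that definability. Your proposal skips this step entirely, so the main difficulty is never addressed.

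The second half also does not close as written. Granting non-forking, what you need is precisely Lemma \ref{unstable to stable forks}: a stable extension of an unstable type forks over the base. Your route to it --- identifying $\pi=\tp(c_i/Aa')$ with $p(x,a')$ --- cannot work as described: the uniqueness you invoke is stationarity of \emph{stable} types over algebraically closed sets (Lemma \ref{definability}), but here the base $p(x)$ is unstable and has no reason to admit a unique non-forking extension; moreover $\pi$, being realized by an order-property configuration, is exactly the kind of extension one expects to be unstable, so a uniqueness statement for stable non-forking extensions would not apply to it. (A further side issue: forking is not symmetric in NIP theories, so choosing $a'$ independent from the sequence does not immediately make $\pi$ a non-forking extension.) The paper sidesteps all of this with a direct dividing argument: the stable extension must concentrate on a single bounded interval $b_n\le x\le b_{n+r}$ of the definable quasi-order witnessing instability of $p(x)$, and the translates of that interval along the indiscernible chain are $2$-inconsistent, so the stable extension divides --- and then the definability lemma upgrades this to \th-forking.
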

This theorem assures (among others) that any non-algebraic
extension of a \th-minimal unstable type is unstable. The
importance of this last fact in the present context is easier to
understand when combined with the results from \cite{HaOn}. To
have a cleaner description of these results it will be convenient
to introduce:

\begin{definition}\label{o-minimal almost extension}
Let $p\in S(A)$ and $B\supseteq A$. A type $q\in S(B)$ is an
\emph{almost extension of $p$} if there is some $p\subseteq q'\in
S(B)$ and a $B$-definable equivalence relation $\sim$ with finite
classes such that $q=q'/\sim$.
\end{definition}

Recall that a type \emph{$p$ is o-minimal} if it extends a
definable set $S$ linearly ordered by a definable relation,
$<$, and o-minimal with respect to $<$ together with the induced
structure on $S$; it is \emph{almost o-minimal} if it is
\th-minimal and has an o-minimal almost extension. In those terms,
Theorem 2 of \cite{HaOn} asserts that in a structure definable in
an o-minimal theory an unstable type has either an almost o-minimal
extension or a hereditarily stable extension. Joining these two
results together we get that a \th-minimal unstable type $p$ is
almost o-minimal. Now in order to interpret a field it suffices to
show that this o-minimal structure is rich. As could be expected,
this will happen precisely when $p$ is a \emph{rich type} (namely,
there exists a large family of \th-curves in $p^2$).

\medskip

Somewhat more complicated than the analysis of the unstable case
is the treatment of the stable case. Equipped with the ideology -
and the analogy with stable theories - that in the stable case geometric complexity is
compensated by model theoretic simplicity, we prove:
\begin{nthm}
Let $p\in S(A)$ be a stable, non-trivial, minimal type. Then $p$
extends a definable stable stably embedded set.
\end{nthm}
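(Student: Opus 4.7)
The plan is to use hereditary stability of $p$ together with non-triviality to isolate a definable local set $\vphi(x)\in p$ whose induced structure is stable, and then to use NIP of the ambient theory to upgrade stability to stable embeddedness. Throughout I would rely on the two structural consequences of hereditary stability already established in the paper: every non-algebraic extension of $p$ is itself a stable type, and forking coincides with \th-forking on $p$ and all such extensions. In particular, the non-triviality of $p$ measured via the pregeometry of \th-algebraic closure is automatically non-triviality in the stable-forking sense.

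First I would extract from non-triviality a definable family of \th-curves on $p$. By the standard rewriting of non-triviality there exist $a,b,c\models p$ which are pairwise \th-independent over $A$ but satisfy $c\in\acl(Aab)\setminus(\acl(Aa)\cup\acl(Ab))$, and any formula $\psi(x,y,z)$ witnessing this gives, for generic fixed $a$, a family of \th-rank $1$ correspondences linking generic realisations of $p$. I would then sharpen $\vphi(x)\in p$ to have \th-rank $1$ and ensure that every non-algebraic realisation of $\vphi$ realises an extension of $p$; this uses the coordinatisation in minimal types (Theorem \ref{coordinatization}) to exclude competing \th-minimal types from $\vphi$, so that the full locus of $\vphi$ is "made of realisations of $p$" up to algebraicity.

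The main step is showing that the induced structure on $\vphi$ is stable. Assuming for contradiction that some externally definable relation $R(x,y)$ has the order property on $\vphi$, I would take generic witnesses of this order and observe that each coordinate realises a non-algebraic extension of $p$, which, by hereditary stability and by Theorem 2 of the excerpt, is a stable type on which forking equals \th-forking. The family of plane curves from the previous step then allows one to transport the order down to a relation on realisations of a stable extension of $p$ itself, contradicting its stability. Non-triviality is essential here precisely to make such a transport possible: without it the induced structure on $\vphi$ could host unstable noise that the stable internal structure of $p$ has no handle on.

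For stable embeddedness I would use dependence of the ambient theory together with stationarity of the non-forking extension of $p$: any externally definable subset of $\vphi^n$ is controlled by its canonical base, which in the now-stable induced structure on $\vphi$ must be coded by a tuple from $\vphi$ itself; NIP of the ambient theory upgrades this to definability over parameters in $\vphi$. The main obstacle I expect is the third step, bridging from stability of every type realised in $\vphi$ to stability of the induced structure on $\vphi$ as a whole, and it is here that the definable family of \th-curves produced by non-triviality is indispensable.
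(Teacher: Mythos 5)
There is a genuine gap, and it is structural: your argument tacitly assumes that the non-triviality of $p$ yields a family of plane curves rich enough to ``transport'' an order property on $\vphi$ back into a stable extension of $p$, but this is precisely what fails when $p$ is locally modular. The paper splits into two cases. In the non-locally modular case it runs a Buechler-style counting argument: Remark \ref{porism} supplies a two-parameter almost normal family $\phi(x_1,x_2;y_1,y_2)$ of curves, and the content of the proof of Theorem \ref{mainstable} is a sequence of $\uth$-rank and Lascar-inequality computations showing that $\phi(\bar x,\bar b)$ has only finitely many non-algebraic completions over $\acl(B\bar b)$ for arbitrary $B$; stability of the definable set then follows because a set is stable if and only if every type extending it is, and here all such types are either algebraic or among finitely many stable ones. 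Your ``transport the order property'' step never confronts the actual difficulty, namely that witnesses of an order property on $\vphi$ need not realise extensions of $p$ at all; the paper handles this by bounding the number of completions of the formula, not by relating arbitrary types in $\vphi$ to $p$. (Your suggestion to use coordinatisation to exclude competing \th-minimal types from $\vphi$ is not a mechanism that exists: coordinatisation decomposes a single type, it does not purify a formula.)

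In the locally modular case no two-dimensional family of curves exists and the whole strategy collapses; the paper instead adapts Hrushovski's group recognition theorem (Lemmas \ref{type change} and \ref{final type}, Proposition \ref{ggroup}) to produce a minimal group $G$ with a definable embedding of a minimal stable type $s\not\perp p$, and then proves separately (Lemma \ref{generic stable}) that a rosy dependent group with a stable \th-generic type is a stable group; serious stability of $p$ is pulled back through a finite-to-finite correspondence. This half, which the introduction calls the more complicated one, is entirely absent from your plan. Two smaller points: producing a minimal formula in $p$ at all is itself a theorem (Proposition \ref{seriously minimal}, proved via the $\chi$-definition of the non-triviality configuration and rank computations, not via coordinatisation), and for stable embeddedness the paper simply invokes that stable sets in dependent theories are stably embedded, so your canonical-base discussion, while not wrong in spirit, is doing work that is already packaged in a known fact.
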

When $p$ is rich the proof is a straightforward adaptation of
Buechler's Dichotomy theorem. In the locally modular case the
proof is slightly more complicated: we adapt Hrushovski's theorem
showing that for a locally modular minimal hereditarily stable
type $p$ there exists a minimal hereditarily stable $s\not \perp
p$ and a type-definable minimal group $G_0$ acting generically on
$s$. Pursuing our generalisation of Hrushovski's theorem, we show
that $G_0$ has a definable minimal supergroup and that $G$ has a
hereditarily generic stable type. From this we conclude that $G$
is a stable stably embedded group. This work takes the better part
of Section \ref{geometry-section}.

\subsection{\th-forking preliminaries.}

Before proceeding we remind the basic \th-forking
terminology needed to define the setting we will be working in; all
definitions and facts are taken from \cite{On} and \cite{onshuusthesis}.

\begin{definition}\label{thorking}
A formula $\delta (x,a)$ \emph{strongly divides over $A$} if
$\tp(a/A)$ is non-algebraic and $\{ \delta (x,a') \}_{a'\models
\tp(a/A)}$ is $k$-inconsistent for some $k\in \mathbb{N}$. A
formula $\delta (x,a)$ \emph{\th-divides over $A$} if there is a
tuple $c$ such that $\delta (x,a)$ strongly divides over $Ac$.
\end{definition}

Standard forking terminology generalises naturally to \th-forking.
In particular, a formula \th-forks over a set $A$ if it implies a
finite disjunction of formulas \th-dividing over $A$.

What makes this notions useful
is the existence of a large class of theories, rosy theories,
where non-\th-forking
(\th-independence) gives rise to a geometric notion of
independence so, in particular, symmetric and transitive. The class of
rosy theories include, among others, all simple theories, and
more importantly for our purposes, o-minimal theories and any
theory they interpret.

In stable theories \th-independence coincides with
forking-independence. In fact, the following stronger result
(Theorem 5.1.1 of \cite{On}) that we will need is true in any rosy
theory:

\begin{fact}\label{5.1.1}
If $\phi(x,y)$ satisfies NOP and there is a $\phi$-formula
witnessing that $\tp(a/bc)$ forks over $c$, then there is a
$\phi$-formula witnessing that $\tp(a/bc)$ \th-forks over $c$.
\end{fact}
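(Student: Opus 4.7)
The plan is to convert the hypothesised $\phi$-formula witness of forking into a $\phi$-formula witness of \th-forking by producing, via NOP of $\phi$, an auxiliary parameter $c'$ over which the formula \emph{strongly} divides; then by the very definition of \th-dividing, this immediately yields \th-forking over $c$. Unpacking the hypothesis, a $\phi$-formula $\theta(x,b')\in\tp(a/bc)$ forking over $c$ means $\theta(x,b')\vdash\bigvee_{i\le n}\chi_i(x,d_i)$ with each $\chi_i(x,d_i)$ dividing over $c$. The $\chi_i$ need not themselves be $\phi$-formulas, so the first step would be to replace each by an honest $\phi$-formula in $\tp(a/bcd_i)$ still dividing over $c$; this is the local content of definability of $\phi$-types under NOP, which lets the failure of $\phi$-independence always be witnessed by a $\phi$-instance.

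Fix then a $\phi$-formula $\chi(x,d)\in\tp(a/bc)$ dividing over $c$ and extract a $c$-indiscernible sequence $(d_i)_{i<\omega}$ with $d_0=d$ along which $\{\chi(x,d_i)\}_i$ is $k$-inconsistent. Now NOP is used decisively: for stable $\phi$ a $c$-indiscernible sequence is $\phi$-indiscernible as a \emph{set}, and the $\phi$-type of $d$ over the sequence is definable over a finite ``local canonical base'' $c'$ extracted from the sequence. Setting $A:=cc'$, indiscernibility makes $\tp(d/A)$ non-algebraic, and any $d''\models\tp(d/A)$ can be slotted into an $A$-indiscernible extension of $(d_i)$ along which $k$-inconsistency of $\chi$ persists. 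Hence the family $\{\chi(x,d''):d''\models\tp(d/A)\}$ is itself $k$-inconsistent, so $\chi(x,d)$ strongly divides over $A$ and therefore \th-divides over $c$, producing the desired $\phi$-formula witnessing \th-forking in $\tp(a/bc)$.

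The main obstacle will be this last step, where $k$-inconsistency along a single indiscernible sequence has to be promoted to $k$-inconsistency across \emph{every} realisation of $\tp(d/A)$. Without NOP there is no reason for realisations of $\tp(d/A)$ to behave like the canonical sequence and consistency could creep back in; locating $c'$ precisely so that this promotion succeeds is exactly what stability of $\phi$ provides. A minor secondary point is bookkeeping: one must verify that the resulting witness \th-forks over the original base $c$ and not merely over the enlarged $A=cc'$, but this is automatic from the definition of \th-dividing once strong dividing over $A$ has been secured.
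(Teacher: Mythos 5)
The paper does not actually prove this statement; it is quoted directly from Theorem~5.1.1 of~\cite{On}, so there is no internal proof to compare against. Assessing your argument on its own merits: the high-level plan --- isolate a single dividing $\phi$-formula $\chi(x,d)$, then manufacture, via stability of $\phi$, an auxiliary parameter $c'$ over which $\chi(x,d)$ strongly divides, so that $\chi(x,d)$ \th-divides over $c$ --- is the correct shape of argument. However, the write-up contains a genuine gap precisely at what you yourself flag as the ``main obstacle,'' and nothing you say closes it.

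The unjustified inference is from ``any $d''\models\tp(d/A)$ can be slotted into an $A$-indiscernible extension of $(d_i)$ along which $k$-inconsistency of $\chi$ persists'' to ``hence the family $\{\chi(x,d''):d''\models\tp(d/A)\}$ is itself $k$-inconsistent.'' By the definition of strong dividing used in the paper, one must show that \emph{every} $k$-element set of pairwise distinct realizations $d''_1,\dots,d''_k$ of $\tp(d/A)$ makes $\bigwedge_j\chi(x,d''_j)$ inconsistent. Your argument handles one $d''$ at a time: for each $d''$ it exhibits \emph{some} $A$-indiscernible sequence containing it which is $k$-inconsistent. But arbitrary $k$-tuples of realizations of $\tp(d/A)$ need not lie along a common indiscernible sequence, so the one-at-a-time insertion does not certify the required mutual inconsistency, and the remark that ``locating $c'$ precisely so that this promotion succeeds is exactly what stability of $\phi$ provides'' names the lacuna without filling it. What is actually needed is a local-stability argument (a local $\phi$-rank drop, or uniqueness of the $\phi^{\mathrm{opp}}$-definition of the average type) that constrains \emph{all} realizations of $\tp(d/cc')$ simultaneously, not just those passing through a single indiscernible sequence. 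There is also a secondary soft spot at the outset: you replace the dividing witnesses $\chi_i(x,d_i)$ (which a priori are arbitrary formulas) by $\phi$-formulas via a brief appeal to ``definability of $\phi$-types''; this reduction is true but is itself a non-trivial piece of local stability theory, asserted here rather than argued.
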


We will make ample use of the \th-forking analogues of global
ranks defined in super simple theories: we define the \th-rank of
a formula to be the analogue of the global rank in simple
theories, so that \th$(\varphi(x,b))\geq \alpha+1$ if there is
$\psi(x, c) \vdash \varphi(x, b)$ \th-dividing over $b$ with
\th$(\psi(x,c))\geq \alpha$.  We also define the $\uth$-rank as
the foundation rank of the partial order (on complete types) with
$p<_\tho q$ defined as ``$p$ is a \th-forking extension of $q$''.
In analogy with the case of simple theories, a theory where all
types have ordinal valued $\uth$-rank is called super-rosy. Recall
(see e.g. \cite{HaOn}) that in an o-minimal theory
$\tho(\phi(x))=\dim \phi(x)$ for any formula $\phi(x)$; in
particular, $\uth(p)=\dim p$ for all $p$. It follows that if $\CN$
is interpretable in an o-minimal theory, $\CN$ is super-rosy of
finite $\uth$-rank and thus \th-independence is a geometric
independence relation.

\medskip

\noindent We can now introduce the main object of investigation in
this paper:

\begin{definition}
A type $p$ is \emph{minimal} if $\uth(p)=1$. A formula $\theta$ is
\emph{minimal} if $\Tho(\theta)=1$. A type $p$ is \emph{seriously
minimal} if it extends a minimal set.
\end{definition}

Our definition of a minimal formula is the analogue of
\emph{weakly minimal} formulas in the stable context. Since
minimal (in stability theoretic terminology) formulas are
(\th-)minimal, we have no remorse overriding this terminology.
As for minimal types, we will see that stable minimal
types (over algebraically closed sets) are stationary of
$\ur$-rank 1, and therefore minimal in the stability theoretic
sense as well.

\subsection{Coordinatisation}

The aim of this subsection is to show that in super-rosy theories
of finite \th-rank (and so, in particular those theories
interpretable in an o-minimal structure) minimal types control the
structure of the theory, justifying our focusing of the analysis
on them. We will prove that such theories are coordinatised (in
the sense of \S 4 of \cite{HaKiPi}) by \th-minimal types. Our
theorem slightly strengthens a similar unpublished result in
\cite{OnUs2}.

\medskip

\noindent For the proof we need a few easy observations, the first
of which can be found in \cite{HaOn}:

\begin{fact}\label{frank}
Let $\CN$ be definable in an o-minimal structure $\CM$, let
$\phi(x,b)$ be $\CN$-definable and let $p(x)\in S_n^{\CN}(N)$.
Then both $\tho (\phi(x,b))$ and $\uth(p(x))$ are finite.
\end{fact}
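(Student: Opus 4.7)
The plan is to reduce both statements to the fact, already recorded in the excerpt, that in an o-minimal theory $\tho(\phi)=\dim \phi$ for any formula $\phi$, and that the o-minimal dimension of a definable set is always finite. The argument has two parts: transfer the computation of $\tho$ from $\CN$ to $\CM^{eq}$, and then bound $\uth$ for types using $\tho$ of a formula in the type.

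For the first part, I would observe that since $\CN$ is (interpretable and hence) definable in $\CM$, every $\CN$-definable formula $\phi(x,b)$ corresponds, via the interpretation, to an $\CM^{eq}$-definable formula. A witness $\psi(x,c)\vdash\phi(x,b)$ that $\phi$ \th-divides in $\CN$ is, after applying the interpretation, a witness that the pulled-back formula \th-divides in $\CM^{eq}$ (the $k$-inconsistency and non-algebraicity of $\tp(c/Ab)$ are preserved since the $\CM^{eq}$-language extends that of $\CN$). Hence $\tho^{\CN}(\phi(x,b))\leq \tho^{\CM^{eq}}(\phi(x,b))$. Next I would fix an interpreting sort $S=D/E$ of $\CM^{eq}$ containing the range of $x$, and lift $\phi(x,b)$ to the $\CM$-definable formula $\phi^{*}(y,b):=\phi(\pi(y),b)$ on $D$. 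The same preservation of $k$-inconsistency under $\pi$ shows that a \th-dividing chain in $\CM^{eq}$ pulls back to a \th-dividing chain in $\CM$, so $\tho^{\CM^{eq}}(\phi(x,b))\leq \tho^{\CM}(\phi^{*}(y,b))=\dim \phi^{*}(y,b)$, which is finite since $\phi^{*}$ is an $\CM$-definable set.

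For the second part, given $p(x)\in S^{\CN}_n(N)$, I would pick any $\phi(x)\in p$ and invoke the standard inequality $\uth(p)\leq \tho(\phi)$, valid in any rosy theory and proved by a short induction on $\uth(p)$: a \th-forking extension $q$ of $p$ contains a formula $\psi$ \th-forking over the base, and $\psi\wedge\phi\in q$ then witnesses a strict drop of \th-rank. Combined with the previous paragraph, this gives $\uth(p)\leq \tho^{\CN}(\phi)<\omega$.

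The one point where some care is required is the lifting step $\tho^{\CM^{eq}}(\phi)\leq \tho^{\CM}(\phi^{*})$. The parameters of a \th-dividing witness in $\CM^{eq}$ may themselves be imaginaries, so one must replace them by $\CM$-representatives and check that non-algebraicity of their type and $k$-inconsistency of the array of conjugate instances descend to this representative. This is routine but is the only substantive calculation in the proof; everything else is formal manipulation of the definitions together with the input $\tho^{\CM}=\dim$.
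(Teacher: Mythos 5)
The paper does not actually prove this Fact: it is quoted verbatim from \cite{HaOn}, so there is no internal proof to compare against. Judged on its own terms, your second step --- that $\uth(p)\le\tho(\phi)$ for any $\phi\in p$ in a rosy theory --- is standard and correct. The gap is in the first step, at the passage from $\CN$ to $\CM^{\eq}$. You assert that a witness to \th-dividing in $\CN$ remains a witness in $\CM^{\eq}$ because ``the $k$-inconsistency and non-algebraicity of $\tp(c/Ab)$ are preserved since the $\CM^{\eq}$-language extends that of $\CN$''. For $k$-inconsistency this is right (the set of conjugates can only shrink), but for non-algebraicity the implication runs the wrong way: enlarging the language enlarges $\acl$, so $\tp^{\CN}(c/Be)$ may be non-algebraic while $\tp^{\CM^{\eq}}(c/Be)$ is algebraic, in which case $\psi(x,c)$ does not strongly divide over $Be$ in $\CM^{\eq}$ and your witness is destroyed. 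This is not a removable technicality: whether \th-dividing in a reduct yields \th-forking in the expansion (equivalently, whether rosiness and the \th-ranks are preserved under reducts) is exactly the delicate point here, and a purely formal transfer of witnesses does not settle it.

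What makes the o-minimal case work is a direct dimension computation rather than a transfer of \th-dividing. One shows $\tho^{\CN}(\phi(x,b))\le\dim\phi(x,b)$, where $\dim$ is the o-minimal dimension of the set $\phi$ defines, computed in $\CM$ via the interpretation, by induction on $\dim$: if $\psi(x,c)\vdash\phi(x,b)$ strongly divides over $be$ in $\CN$, compactness produces an infinite $\CM$-definable family containing the $\CN$-conjugates of $c$ over $be$ on which the instances of $\psi$ remain $k$-inconsistent; $k$-inconsistency forces the projection of $\{(x,c'):\ \models\psi(x,c')\}$ onto the $x$-coordinate to have fibres of size less than $k$, and additivity of o-minimal dimension then gives $\dim\psi(x,c)\le\dim\phi(x,b)-1$, so the induction closes. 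Your argument as written uses o-minimality only through the equality $\tho^{\CM}=\dim$, and the fact that it would apply equally well to an arbitrary expansion of an arbitrary rosy theory is a sign that the essential content is missing.
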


From the proof of Lemma 4.7 in \cite{HaOn} we get the following:

\begin{fact}\label{0}
Let $a,b$ be elements and $A$ be a set such that $\tp(a/Ab)$
\th-forks over $A$. Then there is some $\bar b$ such that
$a\thind_{Ab} \bar b$ and such that $\tp(a/Aa\bar b)$ \th-divides
over $A\bar b$.
\end{fact}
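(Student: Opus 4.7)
The plan is a standard ``forking-to-dividing + extension'' argument, with the key observation that the formula $x=a\in\tp(a/Aa\bar b)$ can serve as the $\tho$-dividing witness once a suitable independence $a\thind_A\bar b$ has been engineered.

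First, extract a dividing witness from the hypothesis: since $\tp(a/Ab)$ $\tho$-forks over $A$, there is $\phi(x,b)\in\tp(a/Ab)$ with $\phi(x,b)\vdash\bigvee_{i<n}\psi_i(x,d_i)$ where each $\psi_i(x,d_i)$ $\tho$-divides over $A$. Pick $i$ so that $a\models\psi_i(x,d_i)$, rename $(\psi,d):=(\psi_i,d_i)$, and unfold $\tho$-dividing over $A$ to obtain a tuple $c$ with $\psi(x,d)$ strongly dividing over $Ac$. Now apply the extension axiom for $\thind$ (valid in any rosy theory) to realize $\tp(dc/A)$ by a conjugate $(d^*,c^*)$ with $(d^*c^*)\thind_A(ab)$, and set $\bar b:=d^*c^*$. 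Symmetry, base monotonicity, and monotonicity of $\thind$ deliver both $a\thind_{Ab}\bar b$ --- the first conclusion --- and $a\thind_A\bar b$.

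For the second conclusion take the formula $x=a\in\tp(a/Aa\bar b)$. By the definition of $\tho$-dividing, $x=a$ $\tho$-divides over $A\bar b$ iff some $c''$ witnesses strong dividing of $x=a$ over $A\bar bc''$; as conjugates of an equality are automatically $2$-inconsistent, this reduces to $\tp(a/A\bar bc'')$ being non-algebraic. Take $c''=\emptyset$: we need $a\notin\acl(A\bar b)$. The $\tho$-forking of $\tp(a/Ab)$ over $A$ forces $\uth(\tp(a/A))>\uth(\tp(a/Ab))\geq 0$, hence $\uth(\tp(a/A))\geq 1$; combined with $a\thind_A\bar b$, invariance of $\uth$-rank under $\thind$-independent extensions yields $\uth(\tp(a/A\bar b))=\uth(\tp(a/A))\geq 1$, whence $a\notin\acl(A\bar b)$ as required.

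The chief obstacle is locating a formula inside $\tp(a/Aa\bar b)$ that genuinely $\tho$-divides over $A\bar b$: the natural conjugate candidate $\psi(x,d^*)$ fails because $d^*$ already lies in $A\bar b$, so the non-algebraicity clause of strong dividing collapses for any witness tuple extending $A\bar b$. The trivial formula $x=a$, which is only available because $a$ itself appears in the parameter set $Aa\bar b$, circumvents this obstruction: its $\tho$-dividing reduces purely to non-algebraicity of $a$ over $A\bar b$, and this follows immediately from the independence produced by the extension axiom together with the forking-induced non-algebraicity of $\tp(a/A)$.
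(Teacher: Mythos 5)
Your proposal proves the statement only in the degenerate form created by what is evidently a misprint: in the conclusion, ``$\tp(a/Aa\bar b)$'' should be ``$\tp(a/Ab\bar b)$'', exactly parallel to Fact \ref{1} and to the way this fact is meant to feed the coordinatisation argument (compare Claim \ref{3} inside Lemma \ref{technical coordinatisation}, where the same manoeuvre is carried out inline). Read literally, with $a$ itself among the parameters, the fact is trivial: the witness $x=a$ \th-divides over any set over which $a$ is non-algebraic, so one may simply take $\bar b=\emptyset$ (a \th-forking extension forces $a\notin\acl(A)$, as you note), and your entire construction of $\bar b$ from the dividing parameters does no work in your own verification. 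The paragraph in which you observe that ``the natural conjugate candidate $\psi(x,d^*)$ fails'' and that only the trivial equality survives is precisely the symptom of proving the wrong statement: the content of the fact is that the \th-\emph{forking} of $\tp(a/Ab)$ over $A$ can be upgraded to \th-\emph{dividing} of the extension $\tp(a/Ab\bar b)$, witnessed by an honest formula, after adjoining a tuple $\bar b$ with $a\thind_{Ab}\bar b$. (The paper does not reprove this; it extracts it from the proof of Lemma 4.7 of \cite{HaOn}.)

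Under the intended reading your construction also breaks at a concrete step. You conjugate $(d,c)$ over $A$ only, so there is no reason that $a\models\psi(x,d^*)$ still holds, i.e.\ $\psi(x,d^*)$ need not lie in $\tp(a/Ab\bar b)$ at all; the conjugation must be performed over $Ab$, where the implication $\phi(x,b)\vdash\bigvee_i\psi_i(x,d_i)$ and the \th-dividing of each disjunct over $A$ are preserved, after which extension and symmetry give $a\thind_{Ab}\bar d$ and some $\psi_i(x,d_i)\in\tp(a/Ab\bar d)$ \th-dividing over $A$ --- this is exactly the argument of Claim \ref{3}. Moreover, your fallback witness $x=a$ (and, as you yourself point out, any formula whose parameters sit inside $A\bar b$) can never witness \th-dividing over a base containing its parameters, since strong dividing requires the displayed parameter to be non-algebraic over the base. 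So the genuine content --- producing a non-trivial dividing witness for the extended type, to be combined with Fact \ref{1} to reach strong dividing --- is precisely what your proposal concedes it cannot supply; the first half of your argument (extension, symmetry and base monotonicity giving $a\thind_{Ab}\bar b$) is fine, but the second half needs to be replaced by the conjugation-over-$Ab$ argument just described.
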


\noindent and

\begin{fact}\label{1}
Let $a,b$ be elements and $A$ be a set such that $\tp(a/Ab)$
\th-divides over $A$. Then there is some $e$ such that
$a\thind_{Ab} e$ and such that $\tp(a/Abe)$ strongly divides over
$Ae$. In particular, $\uth(a/Abe)<\uth(a/Ae)$.
\end{fact}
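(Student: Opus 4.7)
The plan is to unwind the definition of \th-dividing and then use the extension axiom for \th-independence to produce the desired parameter. Since $\tp(a/Ab)$ \th-divides over $A$, there is a formula $\delta(x,b)\in \tp(a/Ab)$ and some tuple $c$ such that $\delta(x,b)$ strongly divides over $Ac$: that is, $\tp(b/Ac)$ is non-algebraic and $\{\delta(x,b') : b'\models \tp(b/Ac)\}$ is $k$-inconsistent for some $k\in\Nn$. The whole point of the auxiliary parameter $c$ in the definition of \th-dividing is that it need not be independent from $a$; the lemma asks us to replace it with one that is.

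The natural move is to take an $Ab$-conjugate of $c$ that is \th-independent from $a$ over $Ab$. By the extension axiom for \th-independence (which holds in any rosy theory), pick $e\models \tp(c/Ab)$ with $e\thind_{Ab} a$. Since $e\equiv_{Ab} c$, there is an automorphism $\sigma$ fixing $Ab$ pointwise with $\sigma(c)=e$. Applying $\sigma$ to the family witnessing strong dividing over $Ac$ yields that $\tp(b/Ae)$ is non-algebraic and that $\{\delta(x,b'):b'\models \tp(b/Ae)\}$ is $k$-inconsistent; hence $\delta(x,b)$ strongly divides over $Ae$. Since $\delta(x,b)\in \tp(a/Abe)$, the type $\tp(a/Abe)$ strongly divides over $Ae$.

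The ``in particular'' clause is then immediate: strong dividing is, by definition, a particular case of \th-dividing (take the empty auxiliary parameter), so $\tp(a/Abe)$ \th-forks over $Ae$, which in the presence of ordinal \th-ranks drops $\uth$ by at least one, giving $\uth(a/Abe)<\uth(a/Ae)$. There is no real obstacle; the only point requiring care is the automorphism argument ensuring that strong dividing persists when $c$ is replaced by its $Ab$-conjugate $e$, which boils down to the fact that strong dividing is a property of $\tp(b/Ac)$ alone, and hence is preserved under automorphisms fixing $b$.
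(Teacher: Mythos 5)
Your proof is correct, and it is the standard argument one would expect. The paper gives no proof here (it cites the proof of Lemma 4.7 of \cite{HaOn}), but your route — unwinding \th-dividing to get a witnessing formula $\delta(x,b)$ and auxiliary parameter $c$, using the extension axiom for \th-independence in rosy theories to replace $c$ by an $Ab$-conjugate $e$ with $a\thind_{Ab}e$, and transporting strong dividing along an automorphism fixing $Ab$ pointwise — is exactly the expected one, and the ``in particular'' clause follows as you say since strong dividing over $Ae$ is a special case of \th-dividing over $Ae$ and hence drops $\uth$-rank.
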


We will also need the following result which, although surprising,
follows immediately from the definition:

\begin{fact}\label{strong dividing}
Let $a,b,A$ be such that $a\not\in \acl(A)$. Then $\tp(b/Aa)$
strongly divides over $A$ if and only if  $a\in \acl(Ab')$ for all $b'\models
\tp(b/Aa)$.
\end{fact}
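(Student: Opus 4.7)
The plan is to prove each direction directly from the definition of strong dividing, with compactness doing the main work. Recall that $\tp(b/Aa)$ strongly divides over $A$ means some $\delta(y,a) \in \tp(b/Aa)$ satisfies that the family $\{\delta(y,a') : a' \models \tp(a/A)\}$ is $k$-inconsistent for some $k \in \Nn$ (non-algebraicity of $\tp(a/A)$ is given by assumption).

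For the forward direction, suppose $\delta(y,a) \in \tp(b/Aa)$ witnesses strong dividing with $k$-inconsistency. The key step is to extract a witness that $a \in \acl(Ab)$. I would consider the partial type in $k$ variables
$$\Sigma(x_1,\ldots,x_k) := \bigcup_{i=1}^{k} \tp(a/A)(x_i) \;\cup\; \{x_i \neq x_j : i < j\} \;\cup\; \{\delta(b,x_i) : i \leq k\}.$$
By $k$-inconsistency $\Sigma$ is unrealisable, so by compactness some $\phi(x) \in \tp(a/A)$ forces $\phi(x) \wedge \delta(b,x)$ to have fewer than $k$ realisations, whence $a \in \acl(Ab)$. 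Now any $b' \models \tp(b/Aa)$ is the image of $b$ under an automorphism fixing $Aa$; pushing the algebraic formula over $Ab$ through this automorphism (which fixes $a$ and $A$) yields $a \in \acl(Ab')$.

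For the converse, suppose $a \in \acl(Ab')$ for every $b' \models \tp(b/Aa)$. In particular $a \in \acl(Ab)$, so I may choose an $A$-formula $\psi(x,y)$ and an integer $n$ with $\models \psi(a,b)$ and $|\psi(x,b)| \leq n$. Define $\phi(x,y) := \psi(x,y) \wedge \exists^{\leq n} x\,\psi(x,y)$, an $A$-formula satisfied by $(a,b)$, so that $\phi(a,y) \in \tp(b/Aa)$. Then $\{\phi(a',y) : a' \models \tp(a/A)\}$ is $(n+1)$-inconsistent: any common realisation $b^*$ of $\phi(a'_i,y)$ for $n+1$ distinct $a'_i \models \tp(a/A)$ would produce $n+1$ distinct solutions of $\psi(x,b^*)$, contradicting the quantifier bound. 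Combined with non-algebraicity of $\tp(a/A)$, this shows $\phi(a,y)$ strongly divides over $A$.

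There is no genuinely hard step; the main points requiring care are the two compactness extractions, which furnish a single bounding formula (in the forward direction) and a single bounding integer (in the backward direction). It is also worth noting that conjugation by automorphisms over $Aa$ makes the conditions ``$a \in \acl(Ab)$'' and ``$a \in \acl(Ab')$ for all $b' \models \tp(b/Aa)$'' equivalent, so only algebraicity over $Ab$ is actually used in the backward direction.
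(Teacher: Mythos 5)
Your proof is correct, and it is precisely the routine compactness verification that the paper leaves implicit (the Fact is stated without proof as ``following immediately from the definition''). Both directions are sound: the finite inconsistent fragment of $\Sigma$ yields the algebraic formula $\phi(x)\wedge\delta(b,x)$ over $Ab$, the bounded formula $\psi(x,y)\wedge\exists^{\le n}x\,\psi(x,y)$ gives the $(n+1)$-inconsistent family, and your closing observation that conjugation over $Aa$ reduces the ``for all $b'$'' clause to the single instance $a\in\acl(Ab)$ is exactly right.
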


%
%
%

The following is the technical key to coordinatisation:

\begin{lemma}\label{technical coordinatisation}
Let $p(x)=\tp(a/A)$ be any type such that $\uth(p)=n$, let $b$ be
such that $\tp(a/Ab)$ \th-divides over $A$ and let $\uth(a/Ab)=m$.
Then there is some $e$ such that $a\thind_A e$, $a\thind_{Ab} e$
and $\tp(a/Abe)$ strongly divides over $Ae$.
\end{lemma}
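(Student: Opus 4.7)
The plan is induction on $n - m \geq 1$, building on a witness supplied by Fact \ref{1}. For the base case $n - m = 1$, Fact \ref{1} produces $e$ with $a\thind_{Ab}e$ and $\tp(a/Abe)$ strongly dividing over $Ae$, hence $\uth(a/Abe) < \uth(a/Ae)$. Since $\uth(a/Abe) = m = n-1$ (from $a\thind_{Ab}e$) and $\uth(a/Ae) \leq n$, equality $\uth(a/Ae) = n$ is forced, which gives $a\thind_A e$.

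For the inductive step $n - m \geq 2$, apply Fact \ref{1} to obtain $e_0$ satisfying (2) and (3). If already $a\thind_A e_0$ we are done. Otherwise set $\ell := \uth(a/Ae_0) \in (m, n)$; then $\tp(a/Ae_0)$ \th-forks over $A$, and Fact \ref{0} produces $\bar e$ with $a\thind_{Ae_0}\bar e$ and $\tp(a/A\bar e e_0)$ \th-dividing over $A\bar e$. Noting that $\uth(a/A\bar e e_0) = \ell$, the triple $(A\bar e, a, e_0)$ satisfies the lemma's hypotheses with rank gap $\uth(a/A\bar e) - \ell \leq n - \ell < n - m$. The inductive hypothesis then yields $e'$ such that $a\thind_{A\bar e}e'$, $a\thind_{A\bar e e_0}e'$, and $\tp(a/A\bar e e_0 e')$ strongly divides over $A\bar e e'$.

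The remaining and most delicate step is to assemble $\bar e, e_0, e'$ into a single witness $e$ of the three conclusions for the original triple $(A,a,b)$. The natural candidate is $e := \bar e e_0 e'$: for (3), the formula witnessing the original strong dividing produced by the first invocation of Fact \ref{1} (with parameter $b$ over $Ae_0$) remains $k$-inconsistent when viewed over the larger base $A\bar e e_0 e'$, since realizations of $\tp(b/A\bar e e_0 e')$ form a subfamily of realizations of $\tp(b/Ae_0)$; strong dividing therefore transfers provided $\tp(b/A\bar e e_0 e')$ remains non-algebraic, which holds once $\bar e$ and $e'$ are chosen \th-independent from $b$ over $Ae_0$. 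Properties (1) and (2) reduce, by transitivity of \th-independence, to $\bar e\thind_A a$ and $\bar e\thind_{Ab}a$ alongside the independences of $e'$ supplied by the IH.

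The main obstacle is therefore that Fact \ref{0} provides $\bar e$ only with $a\thind_{Ae_0}\bar e$, and the additional independences of $\bar e$ over $A$ and over $Ab$ (together with the analogous sharpening of $e'$) must be secured separately. My plan is to replace $\bar e$ by an $Ae_0$-conjugate, obtained via the extension axiom of \th-independence, that is additionally \th-independent over $A$ from $ab$; verifying that such a replacement preserves the \th-dividing witness of Fact \ref{0} --- possibly by iterating the Fact \ref{0}/Fact \ref{1} loop inside the induction and exploiting the finiteness of $\uth$-rank to ensure termination --- is where the crux of the argument lies.
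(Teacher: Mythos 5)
Your base case and the opening of the inductive step (invoking Fact~\ref{1} to obtain $e_0$, then splitting on whether $a\thind_A e_0$) agree with the paper. From there you diverge, and the divergence leads to a genuine gap.

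Your final candidate $e := \bar e e_0 e'$ cannot satisfy the first conclusion of the lemma. You are in the case $a\nthind_A e_0$, so $e_0$ \th-forks with $a$ over $A$; by monotonicity any tuple containing $e_0$ also \th-forks with $a$ over $A$, hence $a\nthind_A \bar e e_0 e'$ no matter how cleverly $\bar e$ and $e'$ are chosen. The obstruction is thus not, as you diagnose, that $\bar e$ must be moved by a conjugation to become independent from $a$; it is that $e_0$ must not appear in the witness at all. Dropping $e_0$ from the candidate does not repair the argument either: the strong dividing supplied by your inductive hypothesis, namely that $\tp(a/A\bar e e_0 e')$ strongly divides over $A\bar e e'$, has $e_0$ as the ``moving parameter,'' whereas what the lemma wants is strong dividing of $\tp(a/Ab\bar e e')$ over $A\bar e e'$ with $b$ as the parameter. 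Passing from one to the other is not a matter of restriction to a subfamily; it requires a transitivity-of-algebraicity argument via Fact~\ref{strong dividing}, which is entirely absent from your sketch.

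The paper's route is structurally different and avoids both problems. Rather than changing the base set using Fact~\ref{0}, it keeps the base $A$ and upgrades $e_0$ to $\bar e = e_0 e'$ (choosing $e'$ among the parameters witnessing \th-forking, with $ab\thind_{Ae_0} e'$) so that $\tp(a/A\bar e)$ actually \th-\emph{divides} over $A$; this is Claim~\ref{3}. It then applies the inductive hypothesis to the pair $(A,\bar e)$ --- same base, new ``$b$'' --- producing a single witness $f$ with $a\thind_A f$, $a\thind_{A\bar e} f$, and $\tp(a/A\bar e f)$ strongly dividing over $Af$. The conclusion is obtained for this $f$ alone: $a\thind_{Ab} f$ follows by transitivity after choosing $f$ with $ab\thind_{A\bar e} f$, and $\tp(a/Abf)$ is shown to strongly divide over $Af$ by chaining $b\in\acl(Aa'\bar e')$ and $\bar e'\in\acl(Aa'f)$ (for $\bar e'a'\models\tp(\bar e a/Abf)$) through Fact~\ref{strong dividing}. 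Your plan lacks both Claim~\ref{3} and this final algebraicity chain, and as written the witness you propose contradicts the independence you need to prove.
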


\begin{proof}
We proceed by induction on $n-m$. For the case $n-m=1$, Fact \ref{1}
supplies us with some $e$ such that $a\thind_{Ab} e$ and $\tp(a/Abe)$
strongly divides over $Ae$, since also $\uth(a/Abe)<\uth(a/Ae)$ we get:
\[
n-1=\uth(a/Ab)=\uth(a/Abe)<\uth(a/Ae)\le \uth(a/A)=n
\]
implying that $\uth(a/Ae)=\uth(a/A)$, and the conclusion follows.

Let $a,b,A$ be as in the statement of the lemma. Let $e$ be as in
Fact \ref{1}. If $a\thind_A e$ we have nothing more
to do, so assume $a\nthind_A e$.

\begin{claim}\label{3}
We may assume that $\tp(a/Ae)$ \th-divides over $A$.
\end{claim}

\begin{proof}
By definition of \th-forking  $\tp(a/Ae)\vdash \bigvee_{i=1}^k
\phi_i(x,e_i)$ with each $\phi_i(x,e_i)$ \th-dividing over $A$.
Let $e':=\{e_1,\dots e_k\}$; note that if $e'\equiv_{Ae} f$ then
$\tp(a/Ae)\vdash \bigvee_{i=1}^k \phi_i(x,f_i)$ and each
$\phi_i(x,f_i)$ \th-divides over $A$. Hence we may assume that
$ab\thind_{Ae} e'$ so in particular, by transitivity,
$a\thind_{Ab} ee'$.

By definition $p(x,b,e):=\tp(a/Abe)$ strongly divides over $Ae$ if
and only if
\[
\{p(x,b',e)\}_{b'\models \tp(b/Ae)}
\]
is $k$-inconsistent for some $k$ and $b\not\in \acl(Ae)$. Thus,
\[\{p(x,b',e,e')\}_{b'\models \tp(b/Aee')}\] is
$k$-inconsistent. By monotonicity $b\thind_{Ae} e'$  so $b\notin
\acl(Aee')$ implying that $\tp(a/Abee')$ strongly divides over
$Aee'$.

Setting $\bar e:= ee'$ we get that $a\thind_{Ab} \bar e$,
$\tp(a/Ab\bar e)$ strongly divides over $A\bar e$ and $a$
\th-divides over $A$, as required.
\end{proof}

From now on we assume the conclusion of the claim to hold
and that $\tp(a/Ae)$ \th-divides over $A$. By assumption $\tp(a/Abe)$
strongly divides over $Ae$ and $a\thind_{Ab} e$, so
\[
 \uth(a/A)>\uth(a/Ae)>\uth(a/Abe)=m
\]
and $\uth(a/A)-\uth(a/Ae)<n-m$; by the inductive hypothesis there is
some $f$ such that $a\thind_{Ae} f$, $a\thind_A f$ and
$\tp(a/Aef)$ strongly divides over $Af$. Notice that the same is
true  if we replace $f$ with $f'\equiv_{Abe} f$, whence we may
assume that $ab\thind_{Ae} f$.

In particular $b\not\in \acl(Aef)$, so it follows that $\tp(a/Abe)$
strong divides over $Aef$. We will prove that $\tp(a/Abf)$ strong
divides over $Af$, thus completing the proof. By Fact \ref{strong
dividing} it is enough to show that $b\in \acl(Aa'f)$ whenever $a'\models
\tp(a/Abf)$.

Let $a'\models \tp(a/Abf)$ and let $e'$ be such that $e'a'\models
\tp(ea/Abf)$; by hypothesis $\tp(a'/Ae'b)$ strongly divides over
$Ae'$ and $\tp(a'/Ae'f)$ strongly divides over $Af$. By Fact
\ref{strong dividing} $e'\in \acl(Aa'f)$ and $b\in \acl(Aa'e')$; by
transitivity $b\in \acl(Afa')$ which completes the proof.
\end{proof}

\begin{definition} Given a set of types $\mathcal{P}$ closed under automorphisms,
$\tp(a/A)$ is \emph{coordinatised by $\mathcal{P}$} if there is a
sequence of elements $\langle a_0, \dots, a_n\rangle$ with $a_n=a$
and $\tp(a_{i+1}/Aa_1\dots a_i)\in \mathcal P$.
\end{definition}

\begin{theorem}\label{coordinatization}
Let $T$ be a rosy theory,  $p(x):=\tp(a/A)$ a type of $\uth$-rank
$n$. Then there is a non \th-forking extension $q:=\tp(a/E)$ such
that $q$ is coordinatised by minimal types. Moreover, we can
choose the coordinatising sequence $\langle a_0, \dots,
a_n\rangle$ such that $a_i\in \acl(aE)$.
\end{theorem}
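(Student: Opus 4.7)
The plan is to prove this by induction on $n = \uth(p)$. The base cases are immediate: when $n = 0$ we have $a \in \acl(A)$ and the singleton sequence $\langle a \rangle$ over $E = A$ works, and when $n = 1$ the type $p$ is already minimal, so any $a_0 \in \acl(A)$ together with $a_1 = a$ satisfies the requirements. For $n \geq 2$ the heart of the argument is to isolate a single minimal coordinate $a_0 \in \acl(aE_0) \setminus \acl(E_0)$ over a suitable non-$\tho$-forking extension $E_0 \supseteq A$, with $\uth(a/E_0 a_0) = n - 1$; the remaining coordinates are then produced by applying the induction hypothesis to $\tp(a/E_0 a_0)$.

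To extract the first coordinate, start from a tuple $b$ witnessing $\uth(a/Ab) = n - 1$, and by applying Fact \ref{0} assume that $\tp(a/Ab)$ actually $\tho$-divides over $A$. Now apply Lemma \ref{technical coordinatisation} to $(a, b, A)$: this produces $e$ with $a \thind_A e$, $a \thind_{Ab} e$, and $\tp(a/Abe)$ strongly dividing over $Ae$. Setting $E_0 := Ae$ and $a_0 := b$, one has $\uth(a/E_0) = n$ (non-forking), $\uth(a/E_0 a_0) = n - 1$, and, by Fact \ref{strong dividing} (applied with the roles of the two distinguished tuples exchanged), $a_0 \in \acl(E_0 a)$. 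For the minimality of $\tp(a_0/E_0)$, note that $a_0 \notin \acl(E_0)$ forces $\uth(a_0/E_0) \geq 1$, while the Lascar-style inequality for $\uth$-rank in super-rosy theories of finite rank gives
\[
\uth(a_0/E_0) + \uth(a/E_0 a_0) \leq \uth(aa_0/E_0) = \uth(a/E_0) = n,
\]
where the middle equality uses $a_0 \in \acl(E_0 a)$. Hence $\uth(a_0/E_0) \leq 1$, and $\tp(a_0/E_0)$ is minimal.

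For the inductive step, apply the theorem to $\tp(a/E_0 a_0)$ of rank $n - 1$ to obtain a non-$\tho$-forking extension $\tp(a/E)$ with $E \supseteq E_0 a_0$ and a coordinatising sequence $\langle c_0, \ldots, c_{n-1} = a \rangle$ with $c_i \in \acl(aE)$ and each $\tp(c_{i+1}/E c_0 \ldots c_i)$ minimal. Using the extension axiom in rosy theories, I would arrange for the ``new'' parameters $D := E \setminus (E_0 \cup \{a_0\})$ to satisfy $a_0 \thind_{E_0} D$. Setting $E^+ := E_0 \cup D$, we have $E^+ \cup \{a_0\} = E$, so each $\tp(c_{i+1}/E^+ a_0 c_0 \ldots c_i) = \tp(c_{i+1}/E c_0 \ldots c_i)$ remains minimal, and $\tp(a_0/E^+)$ is a non-forking extension of the minimal $\tp(a_0/E_0)$, hence also minimal. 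A second application of the Lascar inequality yields
\[
\uth(a/E^+) \geq \uth(a/E^+ a_0) + \uth(a_0/E^+) = (n-1) + 1 = n,
\]
so $\tp(a/E^+)$ is a non-$\tho$-forking extension of $p$. The concatenated sequence $\langle a_0, c_0, \ldots, c_{n-1} \rangle$ then coordinatises $\tp(a/E^+)$, with every element lying in $\acl(aE^+)$.

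The main technical obstacle is the coordinated use of Lemma \ref{technical coordinatisation}, the Lascar-style inequality for $\uth$-rank, and the extension axiom to ensure that once the inductive hypothesis has been applied, the first coordinate $a_0$ can be ``stripped out'' of the enlarged parameter set while preserving both its minimality and the full $\uth$-rank of $a$ over the remaining parameters. Lemma \ref{technical coordinatisation} is engineered precisely so that this rank accounting goes through cleanly.
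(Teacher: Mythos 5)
Your overall architecture is sound up to the last step, and the first half is a correct ``dual'' variant of the paper's argument: the paper instead chooses $b$ with $\uth(a/Ab)=1$, so that $a$ itself becomes the last (minimal) coordinate, and applies the induction to $\tp(b/Ae)$, which has rank $n-1$ over the small base. Your recombination step, however, contains a genuine gap. After applying the induction hypothesis to $\tp(a/E_0a_0)$ you must preserve two things about the new parameters $D$: that $a\thind_{E_0a_0}D$ and that $\tp(a/E_0a_0D)$ is coordinatised by minimal types with a sequence inside $\acl(aE_0a_0D)$. Both are properties of $\tp(D/E_0a_0a)$, so any replacement $D'$ must realize $\tp(D/E_0a_0a)$ --- in particular $\tp(D/E_0a_0)$. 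But whether $a_0\in\acl(E_0D)$ is itself determined by $\tp(D/E_0a_0)$, so the extension axiom cannot manufacture $a_0\thind_{E_0}D$ without destroying the inductive data. Nothing in the inductive hypothesis rules out that the construction returns $E$ with $a_0\in\acl(E_0D)$ (it gives no control over $\tp(E/E_0)$ as opposed to $\tp(E/E_0a_0)$), and in that case $\uth(a/E^+)=\uth(aa_0/E^+)=\uth(a/E^+a_0)+\uth(a_0/E^+)=(n-1)+0$, so $\tp(a/E^+)$ forks over $A$ and your concatenated sequence coordinatises the wrong type.

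The paper's ordering of the induction avoids exactly this quantifier problem: there the induction is applied to the rank-$(n-1)$ element $b$ over $Ae$, producing new parameters $f$ with $b\thind_{Ae}f$ and a coordinatising sequence in $\acl(Aebf)$, and the only additional independence needed is $a\thind_{Aeb}f$ --- an independence of $f$ from a new element \emph{over a base containing everything the inductive data refers to}. That is precisely what extension delivers: choose $f'\equiv_{Aeb}f$ with $a\thind_{Aeb}f'$, and all the inductive conclusions survive because $\tp(f'/Aeb)=\tp(f/Aeb)$. If you want to keep your ``peel off the first coordinate'' decomposition you would need to strengthen the statement being proved by induction so as to control $\tp(E/E_0)$; as written, the step ``arrange $a_0\thind_{E_0}D$ by the extension axiom'' does not go through.
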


\begin{proof}
Let $\tp(a/A)$ be as in the statement and let $b$ be any element such that
$p(x,b)=\tp(a/Ab)$ has $\uth$-rank 1. Now either $\uth(p)=1$, in
which case we have nothing to do or, by Lemma \ref{technical
coordinatisation}, we can find $e$ such that $\tp(a/Abe)$ is a
non \th-forking extension of $\tp(a/Ab)$ which strongly divides
over $Ae$ and such that $a\thind_A e$.

Since $b\in \acl(Aea)$  we know by Lascar's inequalities that
$\uth(b/Ae)=\uth(a/A)-1$ so, inductively,  $\tp(b/Ae)$ has a non
\th-forking extension $\tp(b/Aef)$ which can be coordinatised by
\th-minimal types with a coordinatising sequence in $\acl(bef)$.
By hypothesis $b\thind_{Ae} f$ so we can choose
$f$ so that $ab\thind_{Ae} f$.

Since $a\thind_A e$ by transitivity $a\thind_{A} ef$ and any
coordinatisation of $\tp(b/Aef)$ extends through $\tp(a/Abef)$ to
a coordinatisation of $\tp(a/Aef)$. Since we have $b\in \acl(ae)$
and by assumption we can coordinatise $\tp(b/Aef)$ with elements
from $\acl(Aefb)$, the theorem follows.
\end{proof}


\section{Hereditarily stable types and \th-forking}\label{stable section}

This section is dedicated to understanding stable types in
dependent theories. Though several of the results seem to hold
without any assumptions on the theory $T$, we will assume
throughout that $T$ is dependent (unless specifically stated
otherwise).  We will use standard stability theoretic definitions
and notation from \cite{Sh} (mainly Section II) and \cite{Pi}.
The following slightly strengthens Theorem II-2.2 in \cite{Sh}:

\begin{fact}\label{stable types shelah}
Let $T$ be any theory (not necessarily dependent). For a type
$p\in S(A)$ and a formula $\phi(x,y)$ over $C\supset A$ the
following are equivalent.

\begin{enumerate}

\item There exists $B\supseteq C$ such that
\[
|\{q\in S_{\phi}(B): p\cup q \text{ is consistent}\}|>|B|
\]

\item There are infinite indiscernible sequences $\langle
a_i\rangle_{i\in I}$ in $p$ and $\langle b_i\rangle_{i\in I}$ such
that $\models \phi(a_i,b_j)$ if and only if $i<j$.

\item $\Gamma_p(\phi,n)$ is consistent for any $n\in \mathbb{N}$, where
\[
 \Gamma_p(\phi,n):=\{p(x_{\eta})\mid \eta\in 2^n\}\cup \{\phi(x, y_{\eta[k]})^{\eta[k]} \mid \eta\in 2^n, k<n\}
\]
\item $R(p,\phi,\infty)=\infty$.
\item $R(p,\phi,2)=\omega$.
\end{enumerate}
Where $R(p,\phi,-)$ are Shelah's local ranks of \cite{Sh}, II.1.
\end{fact}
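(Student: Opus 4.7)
The plan is to establish the circle of implications $(3)\Rightarrow(4)\Rightarrow(5)\Rightarrow(3)$ among the rank/tree conditions via standard Shelah-style inductive arguments, and then bridge to $(2)$ and $(1)$ using Ramsey and counting, respectively. The strengthening over the form appearing in \cite{Sh}, II-2.2 is the double indiscernibility in $(2)$; this is where most of the new work lies.

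First I would run the rank equivalences. For $(3)\Rightarrow(4)$, a consistent binary $\phi$-tree $\Gamma_p(\phi,n)$ of arbitrary height is exactly what is needed to keep producing $\phi$-splittings of $p$, so $R(p,\phi,\infty)\geq n$ for every $n$. Conversely $(4)\Rightarrow(3)$ follows by unfolding the definition of the rank: at each level one can split a type consistent with $p$ by $\phi(x,b)$ vs. $\neg\phi(x,b)$, and iterating produces the desired tree. The equivalence $(4)\Leftrightarrow(5)$ is the standard observation that $R(p,\phi,2)$ is already enough to trigger the full instability rank — once $R(p,\phi,2)\geq\omega$, one can extract an infinite splitting configuration and bootstrap it to infinite $\infty$-rank using automorphisms and compactness, exactly as in \cite{Sh}, II.2.

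The transition $(3)\Rightarrow(2)$ is the technical core. Given that $\Gamma_p(\phi,n)$ is consistent for all $n$, compactness yields a full binary tree $(a_\eta)_{\eta\in 2^{<\omega}}$ of realisations of $p$ together with parameters $(b_\eta)$ witnessing the splits. Choosing the leftmost branch $\eta=0^\infty$ and the companion parameters $b_{\eta[k]}$ along it gives two sequences satisfying $\phi(a_i,b_j)$ iff $i<j$. Applying the Ramsey–Erd\H{o}s–Rado technique first to $(a_i)$ over $(b_j)$ and then to $(b_j)$ over the now-indiscernible $(a_i)$ (while preserving the order relation as well as the fact that each $a_i\models p$) gives the desired pair of mutually ordered indiscernible sequences.

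Finally, $(2)\Rightarrow(1)$ is a direct counting: given mutually indiscernible witnesses of the order property with $\langle a_i\rangle\models p$, one takes $B$ to be $C$ together with a dense linear order of $b$'s of cardinality $|C|^+$; each Dedekind cut in $B$ gives a distinct $\phi$-type over $B$ consistent with $p$, yielding $|S_\phi(B)\cap p|\geq 2^{|B|}>|B|$. The reverse $(1)\Rightarrow(3)$ is the one that really wants the dependence-free treatment of Shelah: by the Erd\H{o}s–Rado theorem, a family of more than $|B|$ pairwise distinct $\phi$-types over $B$ consistent with $p$ forces a binary tree of $\phi$-splittings to witness $\Gamma_p(\phi,n)$ at every finite level. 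The main obstacle in the write-up is to keep track in step $(3)\Rightarrow(2)$ that the Ramsey extraction preserves simultaneously (i) the realisation of $p$ by the $a_i$, (ii) the half-graph pattern $\phi(a_i,b_j)\Leftrightarrow i<j$, and (iii) indiscernibility of both sequences; this requires doing the two Ramsey passes in the correct order and working over $A$ throughout.
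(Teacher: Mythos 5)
Your outline is correct, but it takes a noticeably more self-contained route than the paper for the one direction that requires new work. The paper's proof of $(1)\Rightarrow(2)$ is a short reduction to Shelah's Theorem II-2.2: fix an arbitrary $\theta(x)\in p$, replace $\phi$ by $\phi':=\phi(x,y)\land\theta(x)$, observe that $S_{\phi'}(B)$ is still large, quote Shelah's theorem to get an order-property configuration for $\phi'$ (whose left-hand witnesses automatically satisfy $\theta$), and finish by compactness over all $\theta\in p$; the remaining equivalences are dismissed as easy adaptations of \cite{Sh} II.2.3--2.9. You instead close the circle $(1)\Rightarrow(3)\Rightarrow(2)\Rightarrow(1)$ together with $(3)\Leftrightarrow(4)\Leftrightarrow(5)$, reproving the tree/rank machinery and doing the double Ramsey extraction by hand. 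Both work; the paper's localisation trick $\phi\wedge\theta$ buys you the indiscernibility and the half-graph pattern for free from the black-boxed theorem, whereas your route makes explicit exactly the bookkeeping you flag at the end (preserving $p$, the pattern, and indiscernibility through two Ramsey passes), which is the content of Shelah's lemmas anyway. Two small corrections: in $(3)\Rightarrow(2)$ the witnesses $a_i$ should be taken along branches of the form $0^i1^\infty$ (or similar) rather than all from ``the leftmost branch,'' since a single branch yields only one $a$; and in $(2)\Rightarrow(1)$ the count via Dedekind cuts gives $\mathrm{ded}(|B|)>|B|$ types for a suitably chosen dense order (e.g.\ a lexicographically ordered tree $2^{<\mu}$ with $2^\mu>|C|$), not $2^{|B|}$ -- an arbitrary dense order of size $|C|^+$ need not have that many cuts. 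Neither slip affects the structure of the argument.
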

\begin{proof}
\noindent (2)$\Rightarrow$(1) is standard. \\
\noindent (1)$\Rightarrow$(2) follows from the aforementioned
Theorem II-2.2 in \cite{Sh} as follows: enrich the language by
constants for $A$. Fix an arbitrary $\theta(x)\in p$ and replace
$\phi(x,y)$ by $\phi':=\phi(x,y)\land \theta(x)$. By assumption
$S_{\phi'}(B)$ is large for some $B$. So by Shelah's theorem there
is an infinite sequence witnessing that $\phi'$ has OP. But
$\models \phi'(a_n,b_m)$ for $m>n$ implies that $\models
\theta(a_n)$ for all $n$. By compactness the claim follows.

\noindent The rest are easy adaptations of Lemmas 2.3-2.9(1) in
Chapter II of \cite{Sh}
\end{proof}

Following \cite{LaPo} we define a type $p$ to be \emph{stable} if
$p$ does not satisfy any of the above conditions for any
$\phi(x,y)$. Also following \cite{LaPo}, we will say that a definable (or $\infty$-definable) set
$X$ is \emph{stable} if there are no infinite
indiscernible sequences $\langle a_i\rangle_{i\in I}$ in $X$ and
$\langle b_i\rangle_{i\in I}$ such that $\models \phi(a_i,b_j)$ if
and only if $i<j$. An easy application of compactness shows that
a set is stable if and only if every type extending it is stable.

Recall that in \cite{Ke} Keisler defined \emph{the stable part of
$T$} to be the union of all those types whose
$\Delta$-Cantor-Bendixon rank is finite for all finite $\Delta$.
It may be worth noting that a stable type need not belong to the
stable part of the theory. Consider the structure $(\Qq,<^*)$
given by
\begin{equation*}
x<^*y\iff
\begin{cases} \text{$x<y<\pi$ or}
\\
\text{$\pi \le x,y\le 2\pi$ or }
\\
\text{$2\pi > x > y$}
\end{cases}
\end{equation*}
Take a saturated model, and consider $\tp(a/\Qq)$ for some
$a\models 1/2<x<1/2$. Clearly this last formula is stable and
stably embedded so $p$ is a stable type. But setting
$\Delta:=\{x<^*y\}$ any finite boolean combination of formulas of
the form $x<^* b$ has infinite $<^*$-chains so $p$ has infinite
$\Delta$-rank.

We remind that for a formula $\phi(x,y)$ (over $\0$) a type $p\in
S(A)$ is \emph{$\phi$-definable} if there is a formula, denoted
${\rm d_p}x\phi(x,y)$, such that $\phi(x,a)\in p$ if and only if
$\models {\rm d_p}x\phi(x,a)$ for all $a\in A$. Naturally, $p$ is
\emph{definable} if it is $\phi$-definable for every $\phi$. With
this terminology the following is Theorem 4.4 of \cite{LaPo}.

\begin{fact}
Let $p\in S(M)$ for some $M\models T$. The following are
equivalent.

\begin{enumerate}
\item $p$ is stable.
\item $p$ has at most $|N|^{\aleph_0}$ extensions to every model $N\succ M$ of $T$.
\item Every extension of $p$ to a model $N\succ M$ of $T$ is
definable.
\end{enumerate}
\end{fact}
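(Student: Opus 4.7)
The plan is to prove the three-way equivalence by a cycle $(1)\Rightarrow(3)\Rightarrow(2)\Rightarrow(1)$, with the bulk of the work in $(1)\Rightarrow(3)$.

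For $(3)\Rightarrow(2)$ the argument is a straightforward counting. A definable extension $q\in S(N)$ of $p$ is determined by its defining scheme, i.e.\ a map $\phi(x,y)\mapsto \dd_q x\,\phi(x,y)$ from $L$-formulas to $L(N)$-formulas. Since there are $|T|$ formulas and $|N|$ choices for each (including parameters), the number of definable extensions is bounded by $|N|^{|T|}\le |N|^{\aleph_0}$ for countable $T$ (and in general one adjusts the cardinal arithmetic accordingly).

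For $(2)\Rightarrow(1)$ I would argue by contrapositive. If $p$ is unstable, Fact \ref{stable types shelah}(1) supplies, for some $\phi(x,y)$ and arbitrarily large $B\supseteq M$, more than $|B|$ consistent $\phi$-types over $B$ each consistent with $p$; extending $B$ to a model $N$ of the same cardinality yields more than $|N|$, and iterating Shelah's tree construction (condition (3) of that fact) gives $2^{|N|}$ many $\phi$-types extending $p$, each of which extends to a complete type in $S(N)$, violating~(2) for $|N|$ large enough.

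The substantive implication is $(1)\Rightarrow(3)$. Fix a stable $p\in S(M)$ and an extension $q\in S(N)$, and fix a formula $\phi(x,y)$; I want to produce $\dd_q x\,\phi(x,y)$. By Fact \ref{stable types shelah}(5), stability of $p$ gives $R(p,\phi,2)<\omega$, and the same bound propagates to any extension of $p$, so the complete $\phi$-type $q\upharpoonright\phi$ has finite $R(-,\phi,2)$-rank, say $n$. Since $T$ is dependent, $\phi$ is NIP; the standard local argument then says that any instance $\phi(x,b)$ belongs to $q$ iff $R(q\upharpoonright\phi\cup\{\phi(x,b)\},\phi,2)=n$, and ``having $R$-rank $\ge n$'' is a definable condition in $b$ over any set of parameters large enough to compute the rank — which, because $q$ is over a model, can be taken inside $N$ itself. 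This produces $\dd_q x\,\phi(x,y)$ as a Boolean combination of $\phi$-instances over $N$.

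The main obstacle I anticipate is the last step: making definability of the $\phi$-rank condition actually land over $N$ (not merely over some extension). The classical stable proof handles this because the relevant ``halving'' instances can always be found inside a model; here the same will be true provided one checks, using NIP plus finiteness of $R(p,\phi,2)$, that a rank-preserving $\phi$-instance exists in $N$. This amounts to an honest local stability argument inside the $\phi$-reduct of the type $p$, and is where dependence of $T$ is genuinely used (to replace the Boolean combinations arising in a general NIP context by true $\phi$-formulas, via alternation bounds on indiscernible sequences drawn from~$N$).
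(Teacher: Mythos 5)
The paper offers no proof of this statement: it is quoted as Theorem 4.4 of \cite{LaPo} (Lascar--Poizat), so there is no in-paper argument to compare yours against. What you have written is essentially the classical proof of that theorem, and the cycle $(1)\Rightarrow(3)\Rightarrow(2)\Rightarrow(1)$ is the standard decomposition. The counting in $(3)\Rightarrow(2)$ is fine, and $(2)\Rightarrow(1)$ works modulo the usual cardinal arithmetic: to contradict (2) you must choose $N$ with $|N|^{\aleph_0}=|N|<2^{|N|}$ while keeping the parameter set of the tree of size $|N|$ (e.g.\ $|N|$ a strong limit of uncountable cofinality), which is routine but should be said. In $(1)\Rightarrow(3)$ two points need tightening. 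First, the criterion ``$\phi(x,b)\in q$ iff $R(q\upharpoonright\phi\cup\{\phi(x,b)\},\phi,2)=n$'' is true but unusable as written: the left-hand side involves the whole infinite type, so it does not produce a formula $\dd_q x\,\phi$. The actual content of the implication is the reduction to a single formula: pick $\theta(x)\in p$ with $\theta\wedge\phi$ NOP (Claim \ref{NOP}, available from stability of $p$ by compactness alone), then pick a finite conjunction $\psi(x)$ of instances from $q\upharpoonright(\theta\wedge\phi)$, with parameters in $N$, realizing the minimal rank-and-multiplicity; only then is ``$R(\psi\wedge\phi(x,b),\phi,2)=n$'' a condition on $b$ definable over the parameters of $\psi$, and equivalent to $\phi(x,b)\in q$. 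You clearly have this Harnik--Harrington-style argument in mind, but as stated the step is circular. Second, your claim that dependence of $T$ is ``genuinely used'' is a misconception: the theorem holds in arbitrary theories, and the passage to a NOP formula comes from stability of the type $p$ itself, not from NIP; invoking NIP here is harmless but unnecessary. Neither point is fatal, but the first is where the substance of $(1)\Rightarrow(3)$ lives and must be spelled out for the proof to be complete.
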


\medskip

In \cite{HaOn} we defined a type $p\in S(A)$ to be
\emph{hereditarily stable} if there is no $\phi(x,y)$ (not
necessarily over $A$) defining a partial (quasi) order with
infinite chains in $p$. The next fact shows that that in
dependent theories being hereditarily stable is equivalent to being stable.

\begin{fact}\label{stable types}
Let $T$ be a dependent theory, $A\subseteq M\models T$.  For
$p\in S(A)$ the following are equivalent:
\begin{enumerate}
\item $p$ is not hereditarily stable

\item There is an indiscernible sequence in $p$ which is not an
indiscernible set.

\item There is $\phi(x,y)$ and infinite indiscernible sequences
$\langle a_i\rangle_{i\in I}$ in $p$ and $\langle b_i\rangle_{i\in
I}$ such that $\models \phi(a_i,b_j)$ if and only if $i<j$.
\end{enumerate}
\end{fact}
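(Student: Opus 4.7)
The plan is a cyclic argument (1)$\Rightarrow$(3)$\Rightarrow$(2)$\Rightarrow$(1), with the equivalence (1)$\Leftrightarrow$(3) being the heart of the matter (that hereditary stability of \cite{HaOn} coincides with Lascar--Poizat stability under NIP) and (2) serving as a convenient indiscernible-theoretic characterisation.

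The implication (1)$\Rightarrow$(3) holds in any theory: if $\psi(x,y)$ is a partial quasi-order with an infinite strict chain of realisations of $p$, Ramsey and compactness produce an $A$-indiscernible strict subchain $(a_i)_{i<\omega}$ in $p$, and the strict part $\phi(x,y):=\psi(x,y)\wedge\neg\psi(y,x)$ together with $b_j:=a_j$ exhibits the OP of (3). The implication (3)$\Rightarrow$(2) also requires no dependence: given the OP-witnessing sequences, the pair-sequence $(a_ib_i)$ is $A$-indiscernible but fails to be an indiscernible set since $\phi(a_i,b_j)\neq\phi(a_j,b_i)$ for $i<j$; after absorbing the $b_j$'s as parameters (or equivalently, passing to a model $M\supset A$ containing them) one extracts an indiscernible sequence of $p$-realisations over $M$ which is not an indiscernible set.

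The substantive implication is (2)$\Rightarrow$(1). Given a non-set indiscernible sequence $(a_i)$ in $p$, reducing permutations to transpositions of pairs of indices yields a formula $\psi(x,y)$ (over some parameters) and an infinite set of indices on which $\psi(a_i,a_j)$ iff $i<j$. The candidate quasi-order is
\[
  \chi(x,y):=\forall z\,\bigl(\psi(x,z)\to\psi(y,z)\bigr),
\]
automatically reflexive and transitive. Strictness $\neg\chi(a_i,a_j)$ for $i<j$ is witnessed on the sequence itself by $z=a_{i+1}$ (for which $\psi(a_i,a_{i+1})$ holds while $\psi(a_j,a_{i+1})$ fails). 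The delicate point is that $\chi(a_j,a_i)$ should hold for $i<j$, i.e., no ``wild'' parameter $z$ outside the sequence should supply a $\psi(a_j,z)\wedge\neg\psi(a_i,z)$ witness against it. By dependence of $T$, for each fixed $z$ the truth-pattern of $\psi(a_k,z)$ in $k$ along an indiscernible sequence has bounded alternations; extending the sequence by compactness and extracting a further indiscernible subsequence over a sufficiently enlarged base, one forces $\chi$ to faithfully encode the linear order of $\psi$ on the subsequence, producing the required infinite strict chain in $p$.

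The main obstacle is precisely this last step: upgrading the local linear order of $\psi$ on the indiscernible sequence to a globally transitive quasi-order whose strict chain remains contained in $p$. Transitivity of $\chi$ comes for free, but eliminating the contribution of wild $z$-witnesses requires the NIP hypothesis, distinguishing our setting from an arbitrary theory in which (1) could in principle be strictly stronger than (2) or (3).
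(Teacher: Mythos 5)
Your cycle $(1)\Rightarrow(3)\Rightarrow(2)\Rightarrow(1)$ differs from the paper's $(1)\Rightarrow(2)\Rightarrow(3)\Rightarrow(1)$, and the choice turns out to matter. The implication $(1)\Rightarrow(3)$ you give is fine and needs no dependence. But your claim that $(3)\Rightarrow(2)$ ``requires no dependence'' is simply false: in the theory of the random bipartite graph $R\subseteq P\times Q$, the type $p(x)=\{P(x)\}$ admits indiscernible sequences $(a_i)$ in $P$ and $(b_j)$ in $Q$ with $R(a_i,b_j)\iff i<j$ (so $(3)$ holds with $\phi=R$), yet every indiscernible sequence of $P$-elements over any base $B$ is an indiscernible set, since the only structure on a $P$-tuple over $B$ is its constant $R$-profile against $B\cap Q$ together with equalities; thus $(2)$ fails. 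The step where you ``absorb the $b_j$'s as parameters and extract an indiscernible sequence of $p$-realisations over $M$ which is not a set'' cannot be justified in general -- the $b$'s are not in $\dcl$ of the $a$'s, and re-extraction over a base containing them destroys the order pattern. In other words, $(3)\Rightarrow(2)$ is already a form of Shelah's hard implication, not a triviality, and in your cycle you end up having to prove the NIP content twice (once hidden in $(3)\Rightarrow(2)$ and once in $(2)\Rightarrow(1)$) while proving it fully in neither place.

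Your $(2)\Rightarrow(1)$ also has a gap that the appeal to bounded alternation does not close. The preorder $\chi(x,y):=\forall z(\psi(x,z)\to\psi(y,z))$ is indeed transitive, and you correctly check strictness $\neg\chi(a_i,a_j)$ for $i<j$ on the sequence itself. But $\chi(a_j,a_i)$ for $i<j$ requires that \emph{no} element $z$ of the monster satisfies $\psi(a_j,z)\wedge\neg\psi(a_i,z)$; NIP gives you only that for each fixed $z$ the pattern $k\mapsto\psi(a_k,z)$ has at most $n$ alternations along the sequence, which is perfectly consistent with the existence of a bad $z$ for every pair $i<j$ (the alternation bound does not force monotonicity in $k$). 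Making this work requires a genuinely more delicate argument (essentially the content of Lemma 4.1 of \cite{OnPe}, which the paper cites for $(3)\Rightarrow(1)$); your ``extend and re-extract over a sufficiently enlarged base'' does not do it. The paper's decomposition is more economical: $(1)\Rightarrow(2)$ is immediate from the definition (extract an indiscernible chain inside the witnessing quasi-order -- it cannot be a set because the order is definably detected), $(2)\Rightarrow(3)$ is the standard reduction of a non-trivial permutation to a transposition (Theorem II-2.13 of \cite{Sh}), and all the NIP work is concentrated in the single implication $(3)\Rightarrow(1)$, for which a reference is supplied.
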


\begin{proof}
\noindent (1)$\Rightarrow$(2) is obvious from the definition. \\
\noindent (2)$\Rightarrow$(3) is standard
(and easily adapts from, e.g. the proof of Theorem II-2.13 of \cite{Sh}). \\
\noindent (3)$\Rightarrow$(1) is due to Shelah, see e.g. Lemma 4.1 of \cite{OnPe}.
\end{proof}

\begin{corollary}
In a dependent theory, a type is hereditarily stable if and only
if it is stable.
\end{corollary}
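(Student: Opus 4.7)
The plan is to observe that this corollary is essentially a tautology, combining the two preceding facts, once one matches up the relevant witnesses. The content has already been done in Fact \ref{stable types shelah} and Fact \ref{stable types}; what remains is simply to line up the equivalences.

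First, I would unwind the definition of ``stable type'' as given in the paper. By definition $p \in S(A)$ is stable if none of the (equivalent) conditions of Fact \ref{stable types shelah} hold. In particular, by clause (2) of that fact, $p$ is stable if and only if there do \emph{not} exist a formula $\phi(x,y)$ and infinite indiscernible sequences $\langle a_i \rangle_{i \in I}$ in $p$ and $\langle b_i \rangle_{i \in I}$ satisfying $\models \phi(a_i, b_j)$ iff $i < j$.

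Next, since $T$ is dependent, Fact \ref{stable types} applies and gives that $p$ is \emph{not} hereditarily stable if and only if the very same condition holds, i.e.\ there are $\phi$ and indiscernible sequences witnessing the order property inside $p$ (condition (3) of that fact). Comparing these two characterisations, ``$p$ is not stable'' and ``$p$ is not hereditarily stable'' are identical statements, so contrapositively the two notions coincide.

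There is no real obstacle: the content has been absorbed into the preceding facts. The only thing to double-check is that the formula $\phi(x,y)$ appearing in the stability condition and in Fact \ref{stable types}(3) ranges over the same class, i.e.\ arbitrary formulas (possibly with parameters outside $A$). Both definitions explicitly allow this, so the identification is legitimate and the corollary is immediate.
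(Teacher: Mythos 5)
Your proof is correct and is exactly the argument the paper intends: the corollary is stated without proof precisely because condition (2) of Fact \ref{stable types shelah} (whose failure for all $\phi$ defines stability of a type) and condition (3) of Fact \ref{stable types} (which, under dependence, characterises failure of hereditary stability) are literally the same condition. Your check that $\phi$ ranges over arbitrary formulas, possibly with parameters outside $A$, in both statements is the right detail to verify.
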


It should be clear (from our terminology - but also from the
definition) that any extension of a hereditarily stable type is
hereditarily stable. It follows from (1) of Fact \ref{stable
types shelah} that an almost extension of a hereditarily stable
type is hereditarily stable.

\medskip

Since we work in a dependent theory, we will not make any
distinction between ``hereditarily stable'' and ``stable'' types,
referring to both as ``stable types'' and freely using any of the
equivalent properties appearing in the previous facts. We point
out that this terminology is a strict strengthening of Shelah's
definition of stable types in \cite{Sh1}, but since Shelah-stable
types will not be used herein, no confusion can arise. In this
spirit, we will define a type to be \emph{unstable} if it is not
(hereditarily) stable.

The following is useful and we believe it should be known, but we
could not find a reference, so we include the proof.

\begin{lemma}\label{unstable to stable forks} Let $p_0(x)\in S(A_0)$ be a
stable extension of a type $p(x)\in S(A)$ (for some $A\subseteq
A_0$) which is unstable. Then $p_0(x)$ forks over $A$.
\end{lemma}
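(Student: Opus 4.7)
The plan is to argue by contradiction: suppose $p_0$ does not fork over $A$. Since $p$ is unstable, Fact \ref{stable types shelah}(3) yields a formula $\phi(x,y)$ such that $\Gamma_p(\phi, n)$ is consistent for every $n$. The aim is to show that $\Gamma_{p_0}(\phi, n)$ is also consistent for every $n$, which by the same Fact contradicts the stability of $p_0$.

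By compactness, it suffices to prove the following key claim: for every formula $\psi(x, d) \in p_0$ (with $d \subseteq A_0$), the partial type $p \cup \{\psi(x, d)\}$ is unstable via $\phi$; equivalently $\Gamma_{p \cup \{\psi\}}(\phi, n)$ is consistent for every $n$. The crucial hypothesis is that $\psi(x, d)$ does not fork over $A$, since $\psi \in p_0$ and $p_0$ does not fork over $A$. Having this, one can conclude by a second compactness pass over finite conjunctions of $p_0$-formulas.

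To establish the key claim, fix an $A$-indiscernible sequence $\langle a_i, b_i \rangle_{i < \omega}$ realizing $\Gamma_p(\phi, n)$, so that $\phi(a_i, b_j) \iff i<j$. Non-forking of $\psi(x, d)$ over $A$ supplies a realization $a^* \models p$ with $\psi(a^*, d)$ and $\tp(a^*/Ad)$ non-forking over $A$; an $A$-automorphism sending $a^* \mapsto a_0$ produces some $d^{(0)} \equiv_A d$ with $\psi(a_0, d^{(0)})$ such that $\tp(a_0/Ad^{(0)})$ is non-forking over $A$. Iterating the extension property of non-forking, I would find $d^* \equiv_A d$ still satisfying $\psi(a_0, d^*)$ while making the sequence $\langle a_i, b_i \rangle_{i < \omega}$ independent from $d^*$ over $A$. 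Then $\langle a_i, b_i \rangle$ remains indiscernible over $Ad^*$, so $\tp(a_i/Ad^*)=\tp(a_0/Ad^*)$, whence $\psi(a_i, d^*)$ holds for all $i$. An $A$-automorphism mapping $d^* \mapsto d$ finally transports the configuration, yielding an $A$-indiscernible witness for the instability of $p \cup \{\psi(x, d)\}$ via $\phi$.

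The principal obstacle is this iterative extension producing $d^*$ independent from the entire sequence while preserving $\psi(a_0, d^*)$: it requires the symmetry and transitivity of non-forking (which hold in the rosy dependent setting the paper operates in), applied step by step, together with an argument that the conjugates of $d^{(0)}$ over $Aa_0$ selected at each stage remain compatible with $\psi$. Once this is in hand, iterating the key claim over finite conjunctions of formulas in $p_0$ (each again non-forking over $A$) and applying compactness produces consistency of $\Gamma_{p_0}(\phi, n)$ for every $n$, giving the desired contradiction.
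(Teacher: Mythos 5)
Your strategy runs in the opposite direction to the paper's: you assume $p_0$ does not fork over $A$ and try to transport the instability of $p$ up into $p_0$, whereas the paper assumes $p_0$ is stable and directly exhibits a dividing formula in it. Your reduction to the ``key claim'' and the final compactness pass are fine, but the proof of the key claim has a genuine gap at exactly the step you flag. You justify the construction of $d^*$ by ``the symmetry and transitivity of non-forking (which hold in the rosy dependent setting)''. This is false: rosiness gives symmetry and transitivity for \emph{\th-forking}, not for forking, and by Kim's theorem symmetry of forking forces simplicity, so it fails in precisely the unstable dependent theories this lemma is about (e.g.\ in any o-minimal theory). Concretely, what you have is that $\tp(a_0/Ad^{(0)})$ does not fork over $A$; what you need, in order to find $d^*\equiv_{Aa_0b_0}d^{(0)}$ making the $A$-indiscernible sequence $\langle a_ib_i\rangle$ indiscernible over $Ad^*$, is that $\tp(d^{(0)}/Aa_0b_0)$ does not divide over $A$ --- that is the standard characterisation of non-dividing via indiscernible sequences, and it points in the opposite direction. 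Without symmetry there is no way to pass from the first statement to the second, so the element $d^*$ on which your whole argument rests need not exist.

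For contrast, the paper's proof avoids forking calculus altogether: by Fact \ref{stable types}, instability of $p$ is witnessed by a $B$-definable quasi-order $\le$ and a $B$-indiscernible chain $\langle b_i\rangle_{i\in\Qq}$ of realizations of $p$ with $b\models p_0$ sitting inside it; stability of $p_0$ forces $p_0(x)\vdash b_n\le x\le b_{n+r}$ for some $n,r$ (otherwise $\le$ would have infinite chains in $p_0$), and the translates $\{b_{n+kr}\le x\le b_{n+(k+1)r}\}_{k\in\Zz}$ along the indiscernible chain are inconsistent, so $p_0$ divides over $A$. If you want to salvage your direction of argument, the tool to reach for is not forking symmetry but the NIP hypothesis itself (finite alternation of the truth value of $\psi(a_i,d)$ along an indiscernible sequence), which could plausibly confine $\psi(x,d)$ to a convex piece of the order configuration; but as written the proposal does not prove the lemma.
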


\begin{proof}
By Fact \ref{stable types} we know there is a set $B\supseteq
A$, a $B$-indiscernible sequence $\langle b_i \rangle_{i\in
\mathbb{Q}}$ of realizations of $p$ and a $B$-definable transitive
relation $\leq$ such that $b_i\leq b_j$ if and only if $i\leq j$.
Let $b\models p_0$. Since $\tp(b_i/A)=\tp(b/A)=p$, we may assume -
possibly changing $B$ - that $b_{1/2}=b$ so that $b_0\leq x\leq
b_1$ is consistent with $p(x,a)$.

Because $p_0$ is stable there can only be finitely many $n\in
\mathbb{Z}$ for which $\{b_n\leq x\leq b_{n+1}\}\cup p_0(x)$ is
consistent (otherwise, for each $n$ such that it is consistent we
can find $d_n$ witnessing it, showing that $\le$ has infinite
chains in $p_0$).  It follows that $p_0(x)\vdash b_n\leq x\leq
b_{n+r}$ for some $n\in \mathbb{Z}, r\in \mathbb{N}$. Since $\le$
is a quasi order and the $b_i$ form a $\le$-chain it follows that
\[
\{ b_{n+kr}\leq x\leq b_{n+(k+1)r}\}_{r\in \mathbb{Z}}
\]
is 2-inconsistent showing that
$p_0(x)$ divides (and thus forks) over $A$.
\end{proof}

The following claim is an easy application of compactness and Fact
\ref{stable types}, left as an exercise:

\begin{claim}\label{NOP}
Let $p\in S(A)$ be a stable type and $\phi(x,y)$ any formula. Then
there exists $\theta(x)\in p$ such that $\phi(x,y)\land \theta(x)$
has NOP.
\end{claim}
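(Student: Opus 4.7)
The plan is to argue by contradiction, using compactness to manufacture an order-property witness for $p$ itself against $\phi$. Assume the conclusion fails, so that for every $\theta(x)\in p$ the formula $\phi(x,y)\land\theta(x)$ has OP. I then wish to produce a pair of infinite indiscernible sequences $\langle a_i\rangle_{i<\omega}$ with every $a_i\models p$, and $\langle b_i\rangle_{i<\omega}$, such that $\models\phi(a_i,b_j)$ iff $i<j$. By condition (2) of Fact \ref{stable types shelah} this directly contradicts the stability of $p$, so it suffices to construct such sequences.

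For this I consider the partial type $\Sigma$ in variables $\{x_i,y_i:i<\omega\}$ consisting of $p(x_i)$ for every $i$, the formulas $\phi(x_i,y_j)$ for $i<j$ together with $\neg\phi(x_i,y_j)$ for $i\geq j$, and the usual indiscernibility schema for the sequence $\langle(x_i,y_i)\rangle_{i<\omega}$. Any finite fragment of $\Sigma$ mentions only finitely many instances of formulas from $p$, whose conjunction $\theta^{*}(x)$ is itself in $p$. By the standing assumption $\phi(x,y)\land\theta^{*}(x)$ has OP, and the standard Ramsey/indiscernible-extraction argument promotes this to infinite indiscernible witnesses $\langle a^{*}_i\rangle,\langle b^{*}_i\rangle$ with $(\phi\land\theta^{*})(a^{*}_i,b^{*}_j)$ iff $i<j$. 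An initial segment of these realises the finite fragment of $\Sigma$, so compactness produces the desired sequences.

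The only real point to check is that $\theta^{*}$ is actually realised on \emph{every} $a^{*}_i$ in the indiscernible witness: a priori the OP-clause guarantees $\theta^{*}(a^{*}_i)$ only for those $i$ admitting some $j>i$ with $\phi(a^{*}_i,b^{*}_j)$. However, in the infinite sequence such a $j$ exists for every $i$, so $\theta^{*}(a^{*}_0)$ holds, and by indiscernibility $\theta^{*}(a^{*}_i)$ holds for all $i$. With this small observation $\Sigma$ is finitely satisfiable and the contradiction is immediate from the definition of stability. This is thus the main (and essentially only) place where care is needed; everything else is bookkeeping around a standard compactness scheme.
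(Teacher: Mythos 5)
Your argument is correct and is exactly the compactness argument the paper intends (the claim is stated there as ``an easy application of compactness'' and left as an exercise): assuming every $\phi(x,y)\land\theta(x)$ with $\theta\in p$ has OP, one extracts indiscernible OP-witnesses for each finite conjunction $\theta^{*}$ and uses compactness to realise the full scheme, contradicting condition (2) of Fact \ref{stable types shelah}. The point you single out --- that $\theta^{*}(a^{*}_i)$ holds for every $i$ because each index in an infinite sequence has a successor, so the failure of $\phi(a^{*}_i,b^{*}_j)\land\theta^{*}(a^{*}_i)$ for $i\geq j$ really does yield $\neg\phi(a^{*}_i,b^{*}_j)$ --- is indeed the only place requiring care.
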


\begin{lemma}\label{th-forking extensions}
Let $q(x)\in S(B)$ be an extension of a stable type $p(x)\in S(A)$
with $A\subset B$. Then, $q(x)$ forks over $A$ if and only if
$q(x)$ \th-forks over $A$.
\end{lemma}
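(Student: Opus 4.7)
My plan is to prove the two implications separately; the backward direction holds in any theory and makes no use of the stability of $p$. Suppose $q$ \th-forks over $A$, so $q(x)\vdash\bigvee_{i=1}^{n}\delta_i(x,a_i)$ where each $\delta_i(x,a_i)$ \th-divides over $A$. Picking $c_i$ such that $\delta_i(x,a_i)$ strongly divides over $Ac_i$ and taking an $Ac_i$-indiscernible sequence $\langle a_i^{(j)}\rangle_{j<\omega}$ starting at $a_i$, the sequence is automatically $A$-indiscernible and each $a_i^{(j)}$ realises $\tp(a_i/Ac_i)$; strong dividing then yields that $\{\delta_i(x,a_i^{(j)})\}_{j<\omega}$ is $k_i$-inconsistent for some $k_i$. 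Hence each $\delta_i(x,a_i)$ divides over $A$, and $q$ forks over $A$.

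For the forward direction the strategy is to replace an arbitrary forking witness in $q$ by one with NOP, so that Fact~\ref{5.1.1} can be applied. Assume $q$ forks over $A$; by compactness and the definition of forking there is a single formula $\psi(x,\bar c)\in q$, with $\bar c$ a tuple from $B$, such that $\psi(x,\bar c)\vdash\bigvee_{j}\chi_j(x,\bar d_j)$ with each $\chi_j$ dividing over $A$. Applying Claim~\ref{NOP} to the stable type $p$ and the formula $\psi(x,y)$ produces $\theta(x)\in p$ such that $\psi'(x,y):=\psi(x,y)\wedge\theta(x)$ has NOP. Since $p\subseteq q$ we have $\psi'(x,\bar c)\in q$, and $\psi'(x,\bar c)\vdash\bigvee_{j}\bigl(\chi_j(x,\bar d_j)\wedge\theta(x)\bigr)$; each conjunction continues to divide over $A$, because conjoining with the parameter-free $\theta$ preserves the $k$-inconsistency witnessing the dividing of $\chi_j$. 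Thus $\psi'(x,\bar c)$ is a $\psi'$-formula in $q$ witnessing that $q$ forks over $A$, and Fact~\ref{5.1.1} then supplies a $\psi'$-formula witnessing that $q$ \th-forks over $A$.

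The substantive content of the argument is Claim~\ref{NOP} (which is where stability of $p$ enters) combined with Fact~\ref{5.1.1}. Once these are in hand nothing hard remains; the only points to verify are the compactness reduction to a single forking formula in $q$ and the fact that conjoining with the parameter-free $\theta$ does not destroy the dividing of each $\chi_j$, both of which are routine.
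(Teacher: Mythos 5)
Your proof is correct and follows essentially the same route as the paper: extract a single forking formula $\psi(x,\bar c)\in q$, use Claim~\ref{NOP} to replace it by the NOP formula $\psi\wedge\theta$ (which still forks over $A$), and then invoke Fact~\ref{5.1.1}; the easy direction is, as you note, a general fact about \th-forking. One small slip in the write-up: $\theta\in p\in S(A)$ is over $A$, not parameter-free, but since the dividing in question is over $A$ this does not affect the argument (and indeed that step is superfluous, as $\psi'(x,\bar c)\vdash\bigvee_j\chi_j(x,\bar d_j)$ already witnesses forking without conjoining $\theta$ to the disjuncts).
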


\begin{proof}
Any instance of \th-forking is, by definition, an instance of
forking. So we only need to prove the other direction.

Assume $q\supseteq p$ is a forking extension. By definition there
is some $\phi(x,b)\in q$ forking over $A$. Let $\theta(x)\in p(x)$
be as provided by the previous claim. Then $\theta(x)\land
\phi(x,y)$ forks over $A$ and, since it also has NOP, it follows
by Fact \ref{5.1.1} that $q$ is a \th-forking extension of $p$, as
required.
\end{proof}

\begin{corollary}\label{preserving th-rank}
Let $p(x)\in S(A)$ be a stable type. If $p\in S(A)$ has ordinal
valued $\uth$-rank then $\uth(p)=\ur(p)$. In particular, if $p$ is
stable and \th-minimal then $\ur(p)=1$.
\end{corollary}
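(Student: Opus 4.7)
The plan is to exploit the fact that both $\uth$ and $\ur$ are foundation ranks of the partial orders on types given by ``is a \th-forking extension'' and ``is a forking extension'' respectively, and to use Lemma \ref{th-forking extensions} to identify these two orders on the class of stable types. Since \th-forking is always forking we have $\uth(p)\le \ur(p)$ whenever $\ur(p)$ is defined, so the whole content lies in the reverse inequality.

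I would proceed by transfinite induction on $\alpha=\uth(p)$. For the base case $\alpha=0$, the type $p$ has no \th-forking extension, hence by Lemma \ref{th-forking extensions} no forking extension at all, so $\ur(p)=0$. For the inductive step, suppose the conclusion holds for every stable type of $\uth$-rank strictly less than $\alpha$, and let $q\supseteq p$ be any forking extension. Two things need to be observed: first, that $q$ is again stable, which is immediate from the remark following the corollary identifying hereditarily stable and stable types (extensions of hereditarily stable types are hereditarily stable); second, that $q$ is a \th-forking extension of $p$, which is exactly the content of Lemma \ref{th-forking extensions}. Hence $\uth(q)<\alpha$, so by the induction hypothesis $\ur(q)=\uth(q)<\alpha$, giving $\ur(p)\le\alpha$. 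Conversely, any $\beta<\alpha$ is realised as $\uth(q)$ for some \th-forking (and therefore forking) extension $q$ of $p$, and by induction $\ur(q)=\beta$, so $\ur(p)\ge\alpha$. Combining, $\ur(p)=\alpha=\uth(p)$.

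The ``in particular'' is then immediate: if $p$ is stable and \th-minimal, then $\uth(p)=1$, so $\ur(p)=1$ as well. There is no real obstacle here beyond checking the two book-keeping points in the induction step, namely that the extensions we descend to remain stable (so that the inductive hypothesis applies) and that the forking-to-\th-forking transfer is available in both directions; the first is automatic and the second is precisely Lemma \ref{th-forking extensions}.
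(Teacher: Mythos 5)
Your proof is correct and takes essentially the same route as the paper, which simply states that the result ``follows immediately from Lemma \ref{th-forking extensions} and the definitions of $\ur$-rank and $\uth$-rank''; your transfinite induction, together with the observation that all (iterated) forking extensions of $p$ remain stable so that the two partial orders coincide on the relevant cone, is exactly the intended unpacking of that remark.
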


\begin{proof}
This follows immediately from Lemma \ref{th-forking extensions}
and the definitions of $\ur$-rank and $\uth$-rank.
\end{proof}

The upcoming proofs involve some local results
from stability theory. The following are, respectively, Lemma 2.10
and Lemma 2.11 in \cite{Pi-fo}.

\begin{fact}\label{1.2.17'} \label{1.2.16}
Let $\delta(x,y)$ be a formula satisfying NOP, let $q(x)\in
S_{\delta}(B)$ and let $p(x)=q(x)\upharpoonright A$ for some
$A\subseteq B$. Then the following hold:

\begin{description}
\item[Definability of non forking extensions] $q(x)$ does not
fork over $A$ if and only if the $\delta$-definition of $q(x)$
is almost  over $A$.

\item[Open mapping theorem] $q(x)$ does not fork over $A$ if and
only if for every $\theta(x,b)\in q(x)$ there is some
$\sigma(x)\in p(x)$ which is a positive boolean combination of
$A$-conjugates of $\theta(x,a)$.

\end{description}
\end{fact}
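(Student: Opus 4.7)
The plan is to exploit the fact that NOP localises stability theory to the single formula $\delta$: for NOP formulas one has a well-behaved local $\delta$-rank, $\delta$-types over arbitrary sets are definable by a $\delta$-definition $d_q(y)$ (unique up to equivalence), and this $\delta$-definition has a canonical parameter $\beta\in\mathfrak{C}^{\eq}$. With that local machinery in hand both statements become local versions of classical global results from stability theory, and I would prove them in that order.

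For \emph{definability of non-forking extensions} I would argue via the canonical parameter $\beta$ of $d_q$. The forward direction proceeds by contradiction: if $\beta\notin\acl^{\eq}(A)$ then $\beta$ has infinitely many $A$-conjugates, each giving a distinct $\delta$-definition. Pulling these conjugates back to parameters inside (or generated by) $B$ and extracting an $A$-indiscernible sequence by Ramsey/compactness, one produces an instance $\delta(x,b')\in q$ whose $A$-conjugates form a $k$-inconsistent family — a dividing, and hence forking, witness in $q$ over $A$. This contradicts non-forking of $q$, so $\beta\in\acl^{\eq}(A)$, i.e.\ $d_q$ is almost over $A$. Conversely, if $d_q$ is almost over $A$ then $q$ is $\aut(\mathfrak{C}/A)$-invariant up to a finite orbit, and one checks directly that an $\aut(\mathfrak{C}/A)$-invariant $\delta$-type cannot contain a $\delta$-forking formula.

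For the \emph{open mapping theorem} the non-trivial direction is: assume $q$ does not fork over $A$ and $\theta(x,b)\in q$; I want to show some positive boolean combination $\sigma(x)$ of $A$-conjugates of $\theta(x,b)$ lies in $p$. Suppose not, so the partial type $p(x)\cup\{\neg\sigma(x):\sigma\in\Sigma\}$ is consistent, where $\Sigma$ is the family of such positive combinations. By compactness one finds an $A$-indiscernible sequence $(b_i)$ of realisations of $\tp(b/A)$ together with a realisation of $p$ on which no $\theta(x,b_i)$ holds; by NOP this sequence can be arranged so that $\{\theta(x,b_i)\}$ is $k$-inconsistent, witnessing that $\theta(x,b)$ divides (hence forks) over $A$ and contradicting the hypothesis. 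The reverse direction is immediate: a formula whose $A$-conjugates have a positive combination already in $p$ cannot produce a $k$-inconsistent $A$-indiscernible family, so $q$ cannot $\delta$-divide over $A$.

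The main obstacle in both parts is the passage from an infinite orbit under $\aut(\mathfrak{C}/A)$ to a genuine dividing witness, i.e.\ an $A$-indiscernible sequence whose $\theta$-instances are uniformly $k$-inconsistent. This is exactly where NOP does its work: it ensures that the local $\delta$-rank is well-defined and that the Shelah-style rank/extraction arguments used in the stable case still go through formula-by-formula, so that the canonical parameter $\beta$ behaves ``stably'' even though the ambient theory need not be.
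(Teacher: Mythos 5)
The paper does not prove this statement at all --- it is quoted as Lemmas 2.10 and 2.11 of \cite{Pi-fo} --- so your sketch can only be measured against the standard local-stability proofs, and against those it has genuine gaps in both of the non-trivial directions. For the definability clause, the passage from ``the canonical parameter $\beta$ has infinitely many $A$-conjugates'' to ``some instance of $\delta$ in $q$ has a $k$-inconsistent $A$-indiscernible family of conjugates'' is not justified: the conjugates $\sigma(q)$ live over the conjugate domains $\sigma(B)$, and two of them having distinct $\delta$-definitions only means they disagree at some \emph{external} parameter $c$ --- it does not make them inconsistent as partial types, so extracting an indiscernible sequence yields no $k$-inconsistency. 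What actually closes this gap is the finite \emph{multiplicity} of Shelah's local rank: $q$ nonforking over $A$ forces $R(q,\delta,2)=R(p,\delta,2)$, there are only finitely many global $\delta$-types extending $p$ of this maximal rank, these are permuted by automorphisms over $A$, and hence the canonical parameter of the definition lies in $\acl^{\eq}(A)$. You gesture at ``Shelah-style rank arguments'' in your closing paragraph, but the argument you actually run is a different one that does not work. (Relatedly, a $\delta$-type over an arbitrary set $B$ need not have a $\delta$-definition; the object that is definable over $\acl^{\eq}(A)$ is the global nonforking $\delta$-extension, which is exactly what the rank argument handles.)

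The gap in your open mapping argument is more serious. From the consistency of $p(x)\cup\{\neg\sigma:\sigma\in\Sigma\}$ (which does require noting that $\Sigma$ is closed under disjunction) you get a realisation $c\models p$ with $\neg\theta(c,b')$ for all $A$-conjugates $b'$, and then assert that ``by NOP this sequence can be arranged so that $\{\theta(x,b_i)\}$ is $k$-inconsistent.'' Nothing supplies that inconsistency: one realisation of $p$ omitting every $\theta(x,b_i)$ says nothing about the joint consistency of the $\theta(x,b_i)$ with $p$. (Take $\theta(x,y)=E(x,y)$ for an equivalence relation with infinitely many infinite classes: the omitting realisation simply sits in a new class, while $\{E(x,b_i)\}$ for an indiscernible sequence can be perfectly consistent.) The classical proof does not try to manufacture a dividing witness for $\theta(x,b)$; it first invokes the definability clause to place the $\theta$-definition of $q$ over $\acl^{\eq}(A)$, notes that it has only finitely many $A$-conjugates $d_1,\dots,d_m$, and then runs Harrington's finite combinatorial argument over these conjugate definitions to assemble the positive boolean combination $\sigma\in p$. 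The two easy directions (an $\acl^{\eq}(A)$-invariant $\delta$-type cannot divide over $A$; a positive combination of conjugates lying in $p$ blocks dividing) are fine as you state them.
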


In particular, we can strengthen Corollary \ref{preserving
th-rank}:

\begin{lemma}\label{preserving th-rank 2}
Let $p\in S(A)$ be a stable type in a dependent rosy theory, $q\supseteq p$ a non-forking extension. Then $\tho(p)=\tho(q)$. If in addition $\Tho(p)=\alpha<\infty$ then converse is also true.
\end{lemma}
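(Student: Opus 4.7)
The plan is to invoke Lemma~\ref{th-forking extensions} (which equates forking with \th-forking above a stable base) so that both directions reduce to bookkeeping of the \th-rank under \th-forking, combined with the local stability machinery from Fact~\ref{1.2.17'}.

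\textbf{First direction} ($q$ non-forking $\Rightarrow \tho(p)=\tho(q)$). Monotonicity gives $\tho(q)\leq\tho(p)$ for free, since $p\subseteq q$ and the rank of a type is the infimum of the ranks of its formulas. For the reverse inequality, I would pick $\phi(x,b)\in q$ realising $\tho(q)$, and use Claim~\ref{NOP} to replace $\phi$ by $\phi\wedge\theta$ for some $\theta\in p$ so that this formula has NOP (this only decreases the rank). Because $q$ does not fork over $A$ and, by Lemma~\ref{th-forking extensions} applied to stable $p$, does not \th-fork either, the Open Mapping half of Fact~\ref{1.2.17'} produces some $\sigma(x)\in p$ which is a positive boolean combination of $A$-conjugates of $\phi\wedge\theta$. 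Since the \th-rank is invariant under automorphisms and behaves monotonically under positive boolean combinations (bounded above by the maximum over disjuncts and by the rank of any single conjunct), one reads off $\tho(\sigma)\leq\tho(\phi\wedge\theta)\leq\tho(\phi)=\tho(q)$, hence $\tho(p)\leq\tho(q)$.

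\textbf{Second direction.} I would argue the contrapositive: assume $\tho(p)=\alpha<\infty$ and that $q$ forks over $A$, and derive $\tho(q)<\alpha$. By Lemma~\ref{th-forking extensions} some $\phi(x,b)\in q$ satisfies $\phi\vdash\bigvee_{i=1}^{n}\psi_i(x,c_i)$ with each $\psi_i(x,c_i)$ \th-dividing over $A$. Fix $\theta\in p$ realising $\tho(p)=\alpha$. Then $\phi\wedge\theta\in q$ implies $\bigvee_i(\psi_i\wedge\theta)$, and each conjunct $\psi_i\wedge\theta$ still \th-divides over $A=\mathrm{params}(\theta)$: the $k$-inconsistency witnessing strong dividing of $\psi_i$ is immediately inherited by the conjunction since $\theta\in L(A)$. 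By the very definition of \th-rank, $\tho(\theta)\geq\tho(\psi_i\wedge\theta)+1$ for each $i$, so each $\tho(\psi_i\wedge\theta)<\alpha$ (using $\alpha<\infty$). Taking the maximum of finitely many ordinals strictly less than $\alpha$ stays strictly below $\alpha$, and so $\tho(q)\leq\tho(\bigvee_i(\psi_i\wedge\theta))<\alpha=\tho(p)$.

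\textbf{Main obstacle.} The delicate step is verifying that the formula $\sigma$ produced by the Open Mapping Theorem in the first direction really has \th-rank bounded by $\tho(\phi\wedge\theta)$; this relies on the standard but non-automatic rank calculus for \th-rank (invariance under automorphisms, conjunction does not raise the rank, disjunction equals the maximum), which I would verify from the definitions in \cite{On}. The remaining ingredients — monotonicity under extensions, inheritance of strong dividing under conjunction with formulas over the base, and the strict drop of \th-rank under \th-dividing when the rank is finite — are immediate from the setup recalled in the excerpt, and in particular the finiteness hypothesis $\alpha<\infty$ is only used in the second direction (exactly as in the analogous statement for the classical global ranks).
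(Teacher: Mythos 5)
Your proof is correct and follows essentially the same route as the paper: the first direction combines Claim \ref{NOP}, the Open Mapping statement of Fact \ref{1.2.17'}, and the $\Tho$-rank calculus for positive Boolean combinations of $A$-conjugates, while the second argues the contrapositive from the definition of $\Tho$-rank via Lemma \ref{th-forking extensions}. If anything, your treatment of the second direction is slightly more explicit than the paper's terse write-up, since you unpack the \th-forking witness into its \th-dividing disjuncts and bound each strictly below $\alpha$ before taking the maximum, rather than appealing directly to a single witnessing formula.
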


\begin{proof}
Let $\Tho(q)=\alpha$ (possibly $\alpha=\infty$). By
definition there is $\phi(x,b)\in q$ with
$\Tho(\phi(x,b))=\alpha$; by Claim \ref{NOP} we may assume
$\phi(x,y)$ has NOP. Thus, by Fact \ref{1.2.17'} there is a
formula $\sigma(x)\in p$, which is a positive Boolean combination
of $A$-translates of $\phi(x,b)$. Since it is always true that
$\tho(\theta_1\lor \theta_2) = \max
\{\Tho(\theta_1),\Tho(\theta_2))\}$ and $\tho(\theta_1\land
\theta_2) \leq \min \{\Tho(\theta_1),\Tho(\theta_2))\}$ we must
have that $\Tho(\sigma) \le \Tho(\psi)$, and since $\Tho(p)\ge
\Tho(q)$ is always true equality follows.

For the other direction assume that $q$ was a forking extension of $p$. In that case we could find $\psi\in q$ witnessing it. Moreorver, we could choose $\psi$ so that $\Tho(\psi)=\alpha$ and, for any $\vphi\in p$ we could also require that $\psi\to \vphi$. In particular, we can choose $\vphi$ such that $\Tho(\vphi)=\alpha$. But on the other hand, by definition of $\Tho$-rank, all this would imply $\Tho(\vphi)\ge \alpha+1$, a contradiction.
\end{proof}

We can also show that stable types over algebraically closed sets
are stationary (Lemma \ref{definability} below). This was
generalised by Usvyatsov (\cite{Us}) and independently in
\cite{HrPi2} showing that Shelah-stable types over algebraically
closed sets are stationary. We include the proof for completeness:

\begin{lemma}\label{definability}
Let $A=\acl(A)$, $p\in S(A)$ stable, and $p\subseteq q\in S(B)$ a
non forking extension. Then $q$ is definable over $A$; in
particular $q$ is the unique non forking extension of $p$ to $B$.
\end{lemma}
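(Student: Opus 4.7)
The plan is to show that $q$ has a complete definition schema over $A$; once this is in place, uniqueness will drop out of the local stationarity of types in NOP formulas.

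Fix an arbitrary formula $\phi(x,y)$; we show the $\phi$-definition of $q$ lies over $A$. By Claim \ref{NOP}, applied to $p$ and $\phi$, stability of $p$ supplies some $\theta(x)\in p$ for which $\psi(x,y):=\phi(x,y)\wedge\theta(x)$ has NOP. Since $q\supseteq p$ is non-forking over $A$, so is its $\psi$-restriction $q\!\upharpoonright\!\psi$. The \emph{Definability of Non-Forking Extensions} clause of Fact \ref{1.2.17'} then says that the $\psi$-definition of $q$ is almost over $A$. As $A=\acl(A)$, a formula almost over $A$ has only finitely many $\aut(\mathfrak{C}/A)$-conjugates, all definable over $\acl(A)=A$, so (replacing it by the conjunction of its conjugates) this $\psi$-definition is equivalent to one over $A$.

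Because $\theta\in p\subseteq q$ we have $\phi(x,b)\in q\iff\psi(x,b)\in q$ for every $b$, so the same $A$-formula serves as a $\phi$-definition of $q$. Letting $\phi$ range over all formulas gives a complete $A$-definition schema, proving $q$ is definable over $A$.

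For uniqueness, let $q'\supseteq p$ be any non-forking extension to $B$. The previous argument applies verbatim to $q'$, using the same NOP formula $\psi$ for each $\phi$, yielding an $A$-definition for $q'$. It therefore suffices to show that a $\psi$-type $p\!\upharpoonright\!\psi$ over $A=\acl(A)$ has a unique non-forking $\psi$-extension to $B$ for every NOP formula $\psi$: this is the standard local stationarity statement for stable (NOP) formulas, following from combining Definability with the \emph{Open Mapping Theorem} clause of Fact \ref{1.2.17'} (the $\psi$-formulas in any non-forking extension are forced, via positive boolean combinations of $A$-conjugates, to lie in $p\!\upharpoonright\!\psi$, and the definition almost over $A$ is pinned down by $p\!\upharpoonright\!\psi$). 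Hence the $\psi$-definitions of $q$ and $q'$ coincide, so do their $\phi$-definitions for every $\phi$, and $q=q'$. The only delicate point is this last step: one must trust that the Open Mapping Theorem indeed pins down the non-forking $\psi$-extension uniquely over an algebraically closed base, which is the local analogue of stationarity in stable theories and is by now well-documented.
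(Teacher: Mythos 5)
Your proof is correct and follows essentially the same route as the paper's: reduce each $\phi$ to an NOP formula $\psi=\phi\wedge\theta$ via Claim \ref{NOP}, apply the definability clause of Fact \ref{1.2.17'} to place the $\psi$-definition (which, since $\theta\in q$, also serves as a $\phi$-definition of $q$) over $\acl(A)=A$, and invoke local stationarity of NOP formulas over algebraically closed bases for uniqueness. The only cosmetic imprecision is the claim that the conjunction of the finitely many $A$-conjugates of the definition is again a definition of $q$ (that gives only one implication); the standard fix is that its canonical parameter lies in $\acl^{\mathrm{eq}}(A)$, a point the paper glosses over in exactly the same way.
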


\begin{proof}
Let $\phi(x,b)\in q$. By Claim \ref{NOP} there exists
$\theta(x)\in p$ such that $\theta(x)\land \phi(x,y)$ has NOP.
Note that $c \models {\rm d_p}x(\phi(x,y))$ if and only if $c\models
({\rm d_p}x) (\phi(x,y)\land \theta)$, so we may assume that $\phi(x,y)$
has NOP. The result now follows from Fact \ref{1.2.16}.
\end{proof}

In \cite{HaOn} we proved that in any theory interpretable in an
o-minimal structure, any unstable type $p$ either has an o-minimal
almost extension (see Definition \ref{o-minimal almost extension})
or has a non-algebraic stable extension; in particular, if every
stable almost extension of $p$ is algebraic, then the former
option must hold. In Theorem \ref{unstable to stable} we prove
that this is, in fact, the case with minimal unstable types in a
rosy theory (this will be discussed in more detail in Section
\ref{in o-minimal}).

\medskip

\noindent The key to the proof of Theorem \ref{unstable to stable} is: .

\begin{lemma}\label{non thorking definable}
Let $M$ be the model of a rosy theory, let $A=\acl(A)$ and
$p(x,a)\in S(Aa)$ be stable ($a$ possibly an infinite tuple). If
$p(x,a)$ does not \th-fork over $A$ then it is definable over
$\acl(A)$.
\end{lemma}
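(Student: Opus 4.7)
The strategy is to show that for every formula $\phi(x,y)$ the $\phi$-definition of $q:=p(x,a)$ is over $A$. Since $A=\acl(A)$ it suffices to show the $\phi$-definition is \emph{almost} over $A$, and the natural tool for this is Fact \ref{1.2.16}, which relates non-forking to almost-$A$-definability of the $\delta$-definition \emph{provided} $\delta$ satisfies NOP. Thus the task splits into two reductions: (i) replace the arbitrary $\phi$ by a NOP-formula whose $q$-type agrees with that of $\phi$, and (ii) upgrade the hypothesis of non-\th-forking to non-forking on this NOP-formula.

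For (i) I apply Claim \ref{NOP} to the stable type $q$: there is $\theta(x)\in q$ with $\psi(x,y):=\phi(x,y)\wedge\theta(x)$ satisfying NOP. (Although Claim \ref{NOP} is placed in the dependent setting, its proof rests only on Fact \ref{stable types shelah}, which is valid in any theory, so this step is available in a rosy theory.) Because $\theta(x)\in q$, for every parameter $c$ one has $\psi(x,c)\in q$ iff $\phi(x,c)\in q$, so the $\psi$- and $\phi$-definitions of $q$ coincide as formulas in $y$; it therefore suffices to prove the $\psi$-definition is over $A$.

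For (ii) I invoke Fact \ref{5.1.1}: if some instance $\psi(x,c)\in q$ were to fork over $A$, then, since $\psi$ has NOP, a $\psi$-formula of $q$ would \th-fork over $A$, contradicting the hypothesis. Hence the local type $q_\psi\in S_\psi(Aa)$ is a non-forking extension of its restriction to $A$, and Fact \ref{1.2.16} (Definability of non-forking extensions) yields that the $\psi$-definition of $q$ is almost over $A$; since $A=\acl(A)$, it is in fact over $A$. Unwinding through (i), the $\phi$-definition of $q$ is over $A$, and as $\phi$ was arbitrary, $q$ is definable over $A$.

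The only point that needs care is that the preparatory lemmas (especially Claim \ref{NOP}) remain available in the rosy, not merely dependent, context; once that is verified, the argument is a clean combination of Fact \ref{5.1.1} with local stability theory over algebraically closed parameter sets.
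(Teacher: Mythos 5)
Your argument is exactly the ``tempting'' shortcut that the paper's own remark (immediately following Lemma \ref{non thorking definable}) warns against, and the gap it points to is real. Claim \ref{NOP} is applied to the stable type $p(x,a)\in S(Aa)$, so the formula $\theta(x)$ it produces has parameters from $Aa$, not from $A$: the restriction $p(x,a)\upharpoonright_A$ is not assumed stable (in the intended application, Theorem \ref{unstable to stable}, it is assumed \emph{unstable}), so $\theta$ cannot be pushed down to $A$. Consequently $\psi(x,y)=\phi(x,y)\wedge\theta(x)$ is a formula over $Aa$, and NOP is only known for the partitioned formula with $a$ held fixed; if you absorb $a$ into the parameter variable so as to obtain a formula over $A$, NOP is no longer guaranteed. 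Both of your key tools require the stable formula to live over the base. Fact \ref{5.1.1} is applied in the paper (see the proof of Lemma \ref{th-forking extensions}) only when the NOP formula is over the set one forks over; and Fact \ref{1.2.16} concludes that the $\delta$-definition is almost over $A$ only for a partitioned NOP formula $\delta$ whose $A$-conjugates are again instances of $\delta$ --- with $\delta=\psi$ containing the parameter $a\notin\acl(A)$, an automorphism over $A$ moves $\psi$ itself, and the finite-orbit and canonical-base arguments underlying ``almost over $A$'' break down. So step (ii) does not go through: you can conclude neither that the $\psi$-part of $q$ is non-forking over $A$ in the sense needed, nor that its definition is almost over $A$.

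The paper circumvents this by a different mechanism. Using Fact \ref{4.1.11} it produces $b\models\tp(a/A)$ with $a\thind_A b$ such that $p(x,a)\cup p(x,b)$ does not \th-fork over $A$, completes this to a type which does not fork over $Aa$ (and, by symmetry, over $Ab$), and deduces from Lemma \ref{definability} that the $\phi$-definition $\psi(y,a)$ is equivalent to $\psi(y,b)$. It then codes the definition by the class $e=[a]_{\simeq}$ of a definable equivalence relation and shows $e\in\acl(A)$: otherwise ``$[x]_{\simeq}=e$'' would \th-divide over $A$, contradicting $a\thind_A b$ together with $e\in\acl(Ab)$. To salvage your route you would need a version of Claim \ref{NOP} producing $\theta$ over $A$ itself, which is precisely what is unavailable here.
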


For the proof we need the following, which is Lemma 4.1.11 of
\cite{On}:

\begin{fact}\label{4.1.11}
Let $p(x,a)$ and $p(x,b)$ be non \th-forking extensions of
$p(x)\in S(A)$ with $a\thind_A b$. Then there is some $b'\models
\tp(b/A)$ such that $p(x,a)\cup p(x,b')$ does not \th-fork over
$A$ and $a\thind_A b'$.
\end{fact}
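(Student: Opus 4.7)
The plan is to realize the non \th-forking extension $p(x,a)$ by a specific element and then freely extend the type of $b$ over $Aa$ so as to place a conjugate $b'$ independently of that element, using the extension, symmetry, and transitivity properties of \th-independence in the rosy setting.

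First, by existence of non \th-forking extensions, pick $c\models p(x,a)$, so that $c\thind_A a$. Since $a\thind_A b$ by hypothesis, the type $\tp(b/Aa)$ does not \th-fork over $A$; the extension property then provides $b'\equiv_{Aa} b$ with $b'\thind_{Aa} c$. From $b'\equiv_{Aa} b$ we immediately inherit $a\thind_A b'$ (by transferring $a\thind_A b$ along the $Aa$-automorphism sending $b$ to $b'$), securing the second required conclusion. Transitivity of \th-independence yields $b'\thind_A ac$, and symmetry gives $c\thind_A ab'$, so $\tp(c/Aab')$ is a non \th-forking extension of $p(x)$ over $Aab'$ containing both $\tp(c/Aa)=p(x,a)$ and $\tp(c/Ab')$, the latter a non \th-forking extension of $p$ to $Ab'$.

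For the first conclusion I identify $p(x,b')$ with the image of $p(x,b)$ under an $Aa$-automorphism sending $b$ to $b'$ (such an automorphism exists because $b'\equiv_{Aa} b$); under this identification, $p(x,a)\cup p(x,b')$ lies inside $\tp(c/Aab')$, which does not \th-fork over $A$, as required.

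The main obstacle is checking that the non \th-forking extension $\tp(c/Ab')$ produced above actually coincides with the prescribed translate of $p(x,b)$. In rosy theories stationarity can fail, so non \th-forking extensions of $p$ to $Ab'$ need not be unique and this identification is not automatic. The clean resolution is to run a companion construction on the $b$-side first: realize $p(x,b)$ by some $c_0$, use extension to arrange $c_0\thind_A ab$, and exploit the common $A$-type $p=\tp(c/A)=\tp(c_0/A)$ to transfer $b$ to a position $b'$ via an $A$-automorphism identifying $c_0$ with $c$. The delicate point, and the single place where stationarity-type considerations bite, is arranging this automorphism to fix $a$ (which forces the desired matching between $\tp(c/Ab')$ and the conjugate of $p(x,b)$) while still yielding $a\thind_A b'$; this can be handled by first choosing $c$ compatibly with $p(x,b)$ restricted to $Aa$ via an independent base, so that the final conjugation is constrained to act trivially on $a$.
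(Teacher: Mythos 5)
There is a genuine gap, and it sits exactly where you flag it. After choosing $c\models p(x,a)$ and then $b'\equiv_{Aa}b$ with $b'\thind_{Aa}c$, all you know about $\tp(c/Ab')$ is that it is \emph{some} non \th-forking extension of $p$; since non \th-forking extensions are not unique in rosy theories, nothing forces it to be the conjugate $p(x,b')$ of $p(x,b)$, so the inclusion $p(x,b')\subseteq\tp(c/Aab')$ is unjustified. Your proposed repair does not close this: it asks for an automorphism carrying a realization $c_0$ of $p(x,b)$ to $c$ \emph{while fixing $a$}, and such an automorphism exists only if $c_0\equiv_{Aa}c$, i.e.\ only if $c_0\models p(x,a)$. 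But a realization of $p(x,b)$ need not realize $p(x,a)$ (their union may be inconsistent -- that is precisely why $b$ must be moved), so insisting that the conjugation fix $a$ presupposes the compatibility you are trying to prove. The closing suggestion about ``choosing $c$ compatibly with $p(x,b)$ restricted to $Aa$ via an independent base'' is not an argument: $p(x,b)$ is a type over $Ab$ and has no meaningful restriction to $Aa$, and no mechanism is offered that would constrain the conjugation to act trivially on $a$.

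The missing idea is that one should not try to fix $a$ at all: conjugate over $A$ first, and recover independence from $a$ afterwards by an extension step over $Ac$. Concretely: pick $c\models p(x,a)$ (so $c\thind_A a$) and $c_0\models p(x,b)$ (so $c_0\thind_A b$); since $\tp(c_0/A)=\tp(c/A)=p$, an $A$-automorphism sending $c_0$ to $c$ moves $b$ to some $b''$ with $c\models p(x,b'')$, $c\thind_A b''$ and $b''\equiv_A b$. Now apply extension over the base $Ac$ to find $b'\equiv_{Ac}b''$ with $b'\thind_{Ac}a$; as this move fixes $Ac$ pointwise, still $c\models p(x,b')$ and $c\thind_A b'$. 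Transitivity gives $b'\thind_A ac$, hence $a\thind_A b'$; base monotonicity and symmetry give $c\thind_{Aa}b'$, which together with $c\thind_A a$ and transitivity yields $c\thind_A ab'$, so $p(x,a)\cup p(x,b')\subseteq\tp(c/Aab')$ does not \th-fork over $A$, with $b'\models\tp(b/A)$ as required. (A smaller slip in your write-up: from $b'\thind_A ac$, symmetry alone gives $ac\thind_A b'$, not $c\thind_A ab'$; one needs base monotonicity plus transitivity with $c\thind_A a$, as above -- though in your construction this point is moot, since the identification $c\models p(x,b')$ is the step that fails.)
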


\begin{proof}[Proof of Lemma \ref{non thorking definable}]
Let $p(x,a)$ be as in the statement, $\phi(x,y)$ any
formula, and
$\psi(y,a):=(\rm{d_{p(x,a)}}x)\phi(x,y)$, the $\phi$-definition
of $p(x,a)$.

By Fact \ref{4.1.11} there is some $b\models \tp(a/A)$ such that
$a\thind_A b$ and $p(x,a)\cup p(x,b)$ does not \th-fork over $A$;
in particular it does not \th-fork over $Aa$ and therefore has a
completion $q\in S(Aba)$ which does not \th-fork over $Aa$. By
Lemma \ref{th-forking extensions} $q$ does not fork over $Aa$.

It follows from Lemma \ref{definability} that $q$
is definable over $\acl(Aa)$. By symmetry $q$ is also definable
over $\acl(Ab)$. Therefore $\models (\forall
y)(\psi(y,a)\iff \psi(y,b))$. So if we define
\[
z\simeq w  \iff   \forall y (\psi(y,z) \leftrightarrow \psi(y,w))
\]
we get that $[b]_\simeq\in \acl(Aa)$ and $[a]_\simeq\in \acl(Ab)$.
Since $\tp(a/Ab)$ is non algebraic (otherwise $a\in \acl(A)$ and
we have nothing to do) it has infinitely many realizations so $\{w
\mid a \simeq w\}$ is infinite.

Let $e:=[a]_\simeq$; if $e\in \acl(A)$ the lemma follows, so
assume towards a contradiction that $e\not\in \acl(A)$. By
definition the formula ``$[x]_{\simeq}=e$'' 2-\th-divides over $A$
so in particular it witnesses that $a\nthind_A e$. But $b\models
p$ with $a\thind_A b$ and $e\in \acl(Ab)$ so $a\thind_A
e$, a contradiction.  
\end{proof}

\begin{remark}
It may be tempting to try and prove the last lemma using Claim
\ref{NOP} in a stronger way than we have ($p(x,a)$ is stable and
non \th-forking over $A$ and therefore non-forking over $A$), but
we have to be careful: if $\phi(x,a)\in p(x,a)$  forks over $A$
(for some $\phi(x,y)$ over $A$) Claim \ref{NOP} provides us with a
formula $\theta(x)\in p(x,a)$ (that is over $Aa$) such that
$\theta(x)\land \phi(x,y)$ has NOP. This will not tell us anything
about the behaviour of $\phi(x,y)$ over $A$.

This shows that while Claim \ref{NOP} is useful when trying
to understand forking relations between a stable type and its
extensions, understanding definability and forking relations
between a stable type and its restrictions requires more subtlety.
\end{remark}

The main result of this section is now easy to prove, and will
be useful later on:

\begin{theorem}\label{unstable to stable}
Let $p(x,a)\in S(Aa)$ be a stable type in a model of a dependent
rosy theory and assume that $p(x):=p(x,a)\upharpoonright_A$ is
unstable. Then $p(x,a)$ \th-forks over $A$.
\end{theorem}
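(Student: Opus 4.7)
The plan is to argue by contradiction. Suppose $p(x,a)$ does not \th-fork over $A$; I will derive a contradiction by producing a formula $\delta(x,y)$ with NOP whose defining schema for $p(x,a)$ is forced to simultaneously be and not be almost over $A$.

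To invoke Lemma \ref{non thorking definable}, which requires an algebraically closed base, I first extend. Let $q(x)\in S(\acl(A)a)$ be a non-\th-forking extension of $p(x,a)$ over $Aa$ (such an extension exists by the extension axiom for non-\th-forking in rosy theories). Transitivity of non-\th-forking, applied to the hypothesis $a'\thind_A Aa$ for $a'\models p(x,a)$ and the defining property $a'\thind_{Aa}\acl(A)a$ of $q$, yields $a'\thind_A\acl(A)a$; base monotonicity then gives that $q$ does not \th-fork over $\acl(A)$. Since $q$ extends the stable type $p(x,a)$ it is itself stable, and Lemma \ref{non thorking definable} applies: $q$ is definable over $\acl(A)$.

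On the other hand, $q$ is a stable extension of the unstable type $p=p(x,a)\upharpoonright_A$, so by Lemma \ref{unstable to stable forks} there is a formula $\phi'(x,c)\in q$ that forks over $A$. Applying Claim \ref{NOP} to the stable type $q$ and the formula $\phi'(x,y)$ yields $\theta(x)\in q$ such that $\delta(x,y):=\phi'(x,y)\wedge\theta(x)$ has NOP. Because $\theta\in q$, for every parameter $c'$ we have $\delta(x,c')\in q$ iff $\phi'(x,c')\in q$, so the $\delta$-definition of $q$ coincides with its $\phi'$-definition, and hence is over $\acl(A)$, in particular almost over $A$. But $\delta(x,c)\vdash\phi'(x,c)$, so $\delta(x,c)\in q$ also forks over $A$, and Fact \ref{1.2.17'}(1), applied to the NOP formula $\delta$, forces the $\delta$-definition of $q$ to \emph{not} be almost over $A$. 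This is the desired contradiction.

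The main obstacle is locating a NOP witness of forking: the formula $\phi'$ supplied by Lemma \ref{unstable to stable forks} carries no NOP information a priori, while an arbitrary NOP formula need not witness forking. The wedge with $\theta\in q$ furnished by Claim \ref{NOP} is the key device: it preserves the forking witness (the conjunction still implies $\phi'$) while leaving the defining schema of $q$ untouched (conjunction with a formula already in $q$ is invisible to the defining schema). The remaining technical point is the reduction to an algebraically closed base, which is handled cleanly by passing to a non-\th-forking extension and using transitivity together with base monotonicity.
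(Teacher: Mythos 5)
Your reduction to an algebraically closed base is fine, and is in fact handled more explicitly than in the paper's own write-up: passing to a non-\th-forking extension $q\in S(\acl(A)a)$ and using transitivity and base monotonicity legitimately puts you in a position to apply Lemma \ref{non thorking definable} and conclude that $q$ is definable over $\acl(A)$. The gap is in the last step. Fact \ref{1.2.17'} concerns a formula $\delta(x,y)$ whose only parameters are the displayed $y$ (or at worst lie in the base $A$); your $\delta(x,y)=\phi'(x,y)\wedge\theta(x)$ carries the hidden parameters of $\theta$, which live in $\acl(A)a$ and not in $\acl(A)$. For such a $\delta$ the equivalence ``$q\upharpoonright\delta$ does not fork over $A$ if and only if the $\delta$-definition of $q$ is almost over $A$'' is not what the cited fact asserts, and it is false in general: since $\theta\in q$ and $q$ forks over $A$, the formula $\theta(x)$ itself may already fork over $A$, in which case \emph{every} instance $\delta(x,c')$ forks over $A$ for reasons invisible to the $\delta$-defining schema, which only tracks the varying parameter $y$. (A toy example: for $E$ an equivalence relation with infinitely many infinite classes, $\theta(x):=E(x,a)$ and $\phi'(x,y)$ trivial, the $\delta$-definition of the generic type of the class of $a$ is over $\0$ while every instance of $\delta$ divides over $\0$.) So the ``contradiction'' you reach --- the $\delta$-definition is over $\acl(A)$ yet some $\delta(x,c)\in q$ forks over $A$ --- is no contradiction. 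This is precisely the trap the authors flag in the Remark following Lemma \ref{non thorking definable}: the $\theta$ produced by Claim \ref{NOP} is over $Aa$, and conjoining it ``will not tell us anything about the behaviour of $\phi(x,y)$ over $A$.''

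The paper's proof sidesteps this by never trying to localise the forking over $A$ to a single NOP formula. It runs Lemma \ref{non thorking definable} for \emph{every} formula $\phi(x,y)$, concludes that the whole type $p(x,a)$ is definable over $\acl(A)$, deduces from this global definability that $p(x,a)$ does not fork over $\acl(A)$, and only then invokes Lemma \ref{unstable to stable forks} for the contradiction. To repair your argument you would need to replace your final paragraph by a step of this kind --- showing directly that definability of $q$ over $\acl(A)$ is incompatible with $q$ forking over $A$ --- rather than appealing to the local definability-of-non-forking fact for a formula with parameters outside the base.
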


\begin{proof}
Let $p(x,a)$ be a stable type, $\phi(x,y)$ any formula over $A$
and assume towards a contradiction that $p(x,a)$ does not \th-fork
over $A$. By Lemma \ref{definability} $p(x,a)$ has a
$\phi$-definition over $\acl(Aa)$ and by Lemma \ref{non thorking
definable} this definition is over $\acl(A)$. So $\phi(x,b)\in p$
if and only if $b\models (\rm{d_p}x)\phi(x,y)$; since
$(\rm{d_p}x)\phi(x,y)$ is over $\acl(A)$ and $\phi(x,y)$ was
arbitrary it follows that $p(x,a)$ does not fork over $\acl(A)$.
But by assumption $p(x)$ is unstable. This contradicts Fact
\ref{unstable to stable forks}.
\end{proof}

As a first application of the above theorem we get:

\begin{theorem}
Let $p\in S(A)$ be any type of finite $\uth$-rank in a dependent
rosy theory. Then $p$ is stable if and only if some (equivalently,
any) non \th-forking $q\supseteq p$ is coordinatised by stable
types.
\end{theorem}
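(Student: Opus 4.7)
The plan is to combine Theorem \ref{coordinatization}, Theorem \ref{unstable to stable} and a \emph{two-step stability} lemma. Applying Theorem \ref{coordinatization} to $p$ provides a non-\th-forking extension $q = \tp(a/E)$ together with a coordinatising sequence $\langle a_0,\ldots,a_n\rangle$ satisfying $a_n = a$ and each $a_i \in \acl(aE)$; this $q$ will serve as the ``some'' non-\th-forking extension in the statement, and transitivity of \th-independence ensures that any further non-\th-forking extension can be coordinatised analogously.

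The technical heart is the following two-step stability lemma: if $\tp(c/C)$ and $\tp(d/Cc)$ are both stable, then so is $\tp(cd/C)$. I would prove this by the extension-counting characterisation of stability (each stable type has at most $|N|^{\aleph_0}$ extensions to any model $N$). Fix a model $M \supseteq C$ and let $N \succ M$. An extension of $\tp(cd/M)$ to $N$ is realised by some $(c',d')$: there are at most $|N|^{\aleph_0}$ options for $\tp(c'/N)$ by stability of $\tp(c/M)$ (itself a stable extension of $\tp(c/C)$), and for each such $c'$ there are at most $|N|^{\aleph_0}$ options for $\tp(d'/Nc')$ by stability of the conjugate $\tp(d'/Mc')$ of $\tp(d/Cc)$ (pushed up to a model $M'\succ M$ containing $c'$, then restricted). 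Multiplying gives the required bound.

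For the ``only if'' direction, if $p$ is stable then so is $q$ (extensions of stable types are stable). Since each $a_i \in \acl(aE)$ and algebraic types are trivially stable, iterating the two-step lemma yields $\tp(a_0 \cdots a_n/E)$ stable; each link $\tp(a_{i+1}/Ea_0\cdots a_i)$ is then stable, being an extension of a restriction of a stable type. For the ``if'' direction, iterating the two-step lemma along the given stable coordinatising chain yields $\tp(a_0\cdots a_n/E)$ stable, so its projection $q = \tp(a/E)$ is stable (an OP witness in $q$ lifts trivially to one in the tuple type by ignoring the first $n$ coordinates). Finally, if $p$ were unstable then Theorem \ref{unstable to stable} applied to the stable extension $q$ of the unstable $p$ would force $q$ to \th-fork over $A$, contradicting the choice of $q$.

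The main obstacle is the two-step stability lemma; the subtleties there are purely combinatorial/bookkeeping, namely handling the conjugation of $\tp(d/Cc)$ to $\tp(d'/Mc')$ and the model extension needed to invoke the stability of the second factor on the correct domain. Once this is in place, the rest of the argument is a direct combination of the coordinatisation provided by Theorem \ref{coordinatization}, the algebraic-closure property $a_i \in \acl(aE)$ of that coordinatisation, and the restriction-to-extension bridge provided by Theorem \ref{unstable to stable}.
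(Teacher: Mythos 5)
Your proposal is correct and follows essentially the same route as the paper: coordinatise via Theorem \ref{coordinatization}, observe that $\tp(a/E)$ is stable exactly when all the coordinatising links are, and bridge back to $p$ with Theorem \ref{unstable to stable}. Your two-step stability lemma (proved by extension counting, with the usual care that the set $B$ in condition (1) of Fact \ref{stable types shelah} be chosen with $|B|^{\aleph_0}=|B|$) is precisely the step the paper dismisses as ``easy to check using Fact \ref{stable types shelah}'', so you have simply made that step explicit.
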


\begin{proof}
Let $a\models p$. By Theorem \ref{coordinatization} there is a non
\th-forking extension $\tp(a/E)$ of $p$ which is coordinatised by
a sequence $\langle a_0, \dots, a_n\rangle$ such that
$\tp(a_{i+1}/Aa_1\dots a_i)$ is minimal and $a_i\in \acl(Ea)$. It
is now easy to check (using Fact \ref{stable types shelah}) that
$\tp(a/E)$ is stable if and only if $\tp(a_{i+1}/Aa_1\dots a_i)$
is for all $i$. But Theorem \ref{unstable to stable} implies that
$\tp(a/E)$ is stable if and only if $\tp(a/A)$ is, so the theorem
follows.
\end{proof}

In particular we get:
\begin{corollary}
A super-dependent theory $T$ of finite rank is stable if and only
if every \th-minimal type in $T$ is stable (if and only if every
\th-minimal type in $T$ has $\ur$-rank one).
\end{corollary}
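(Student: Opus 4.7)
The proof is a two-step reduction. The main step is to show that $T$ is stable iff every \th-minimal type in $T$ is stable; the parenthetical equivalence is then a translation via Corollary \ref{preserving th-rank}.

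The forward direction of the main equivalence is trivial: stability of $T$ is stability of every type, hence in particular of every \th-minimal type. For the converse, assume every \th-minimal type in $T$ is stable and take an arbitrary $p\in S(A)$. By Theorem \ref{coordinatization}, $p$ admits a non-\th-forking extension $q=\tp(a/E)$ with a coordinatising sequence $\langle a_0,\dots,a_n=a\rangle$ in which each $\tp(a_{i+1}/Ea_0\cdots a_i)$ is \th-minimal and so, by assumption, stable. Thus $q$ is a non-\th-forking extension of $p$ coordinatised by stable types, and by the immediately preceding theorem $p$ is stable. Since $p$ was arbitrary, $T$ is stable.

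For the parenthetical, the forward direction is immediate from Corollary \ref{preserving th-rank}: a stable \th-minimal type $p$ has $\uth(p)=\ur(p)=1$. For the converse, suppose $p$ is \th-minimal and unstable; we aim to derive $\ur(p)\ge 2$. By Fact \ref{stable types} there is a formula $\phi(x,y)$ and indiscernible sequences $\langle a_i\rangle$ in $p$ and $\langle b_j\rangle$ over $A$ with $\phi(a_i,b_j)\iff i<j$. Set $\psi(x,b_0,b_1):=\phi(x,b_1)\wedge\neg\phi(x,b_0)$; then $\psi\in\tp(a_0/Ab_0b_1)$, and the sub-sequence $((b_{2k},b_{2k+1}))_{k<\omega}$ is $A$-indiscernible with the property that a simultaneous realisation of $\psi$ at $m$ of its pairs induces a $(2m{-}1)$-alternation in the $\phi$-type of a single element along $\langle b_j\rangle$, which is forbidden by NIP for sufficiently large $m$. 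Hence $\psi$ divides over $A$, making $\tp(a_0/Ab_0b_1)$ a forking extension of $p$; using \th-minimality of $p$ together with the fact that $\psi$ does not \th-divide over $A$, this extension is non-algebraic, so $\ur(p)\ge 2$, contradicting the assumption. The one genuinely delicate point is this non-algebraicity check; the rest of the corollary is a direct assembly of Theorem \ref{coordinatization}, the preceding theorem, and Corollary \ref{preserving th-rank}.
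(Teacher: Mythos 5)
Your handling of the main equivalence is exactly the paper's (the corollary is stated there as an immediate consequence of the preceding theorem together with Theorem \ref{coordinatization}, with no further argument), and your reduction of the parenthetical to ``unstable \th-minimal implies $\ur\ge 2$'' is the right supplementary step. The dividing argument itself is fine: the sequence of consecutive pairs $\langle(b_{2k},b_{2k+1})\rangle_k$ is $A$-indiscernible (after arranging, by Ramsey and compactness, that the witnessing sequences of Fact \ref{stable types} are indiscernible over $A$), and a common realization of $m$ instances of $\psi$ forces $2m-1$ alternations of $\phi$ along an indiscernible sequence, contradicting NIP for large $m$; so $\psi$ divides over $A$.

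However, your justification of the one step you yourself flag as delicate --- the non-algebraicity of the forking extension --- does not work. Neither \th-minimality of $p$ nor the (unverified, and in fact irrelevant) claim that $\psi$ does not \th-divide over $A$ rules out $a_0\in\acl(Ab_0b_1)$: \th-minimality says every \emph{\th-forking} extension of $p$ is algebraic, and algebraic extensions of a non-algebraic type always \th-fork (they strongly divide), so an algebraic completion of $p\cup\{\psi\}$ would be perfectly compatible with $\uth(p)=1$. The correct and easy fix is to index the indiscernible sequences by $\Qq$: every $a_i$ with $0\le i<1$ satisfies $\phi(x,b_1)\wedge\neg\phi(x,b_0)$ and realizes $p$, and these are pairwise distinct since a non-constant indiscernible sequence has distinct terms. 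Hence $p(x)\cup\{\psi(x,b_0,b_1)\}$ has infinitely many realizations and therefore a non-algebraic completion $q\in S(Ab_0b_1)$; since $\psi\in q$, this $q$ divides over $A$, giving $\ur(p)\ge 2$. With that substitution your proof is complete.
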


In a rosy theory the notion of orthogonality and hence also that
of \th-regular types are naturally defined. In super-rosy theories
every type is domination equivalent to the sum (in the appropriate
sense) of \th-regular types. It is therefore natural to ask:

\begin{question}
Let $T$ be a super-dependent theory. Is $T$ stable if and only if
every \th-regular type in $T$ is stable ?
\end{question}

\section{The geometry of minimal types.}\label{geometry-section}

In this section we develop the basic geometric theory of minimal
types. We show that minimal stable types behave in many ways like
minimal types in stable theories. This is explained by the main
result of this section, proving that a non-trivial minimal stable
type (as always, in a super-dependent theory) is seriously stable.

As usual, definable families of plane curves play a crucial
role in the analysis:

\begin{definition}
 Let $\theta$ be a minimal formula.
\begin{enumerate}
\item A plane curve (with respect to $\theta$) is a minimal subset
of $\theta^2$. If $\Phi\subseteq \theta^2$ is type-definable, a
plane curve $C(x,y)$ (with respect to $\theta$) is \emph{through}
$\Phi$ if there is some $\bar b\models \Phi\cup \{C(x,y)\}$ such
that $\bar b \notin \acl(A)$ for some $A$ over which $C(x,y)$ is
defined.

\item A definable family of plane curves $\{\theta_c:c\models
\psi\}$ is \emph{almost normal} at the ($\infty$-)definable
$\Phi\subseteq \theta^2$ if for all $c\models \psi$ there are
finitely many $c'\models \psi$ such that
$\{\theta_c\land\theta_{c'}\}\cup \Phi$ is non-algebraic.

\item If $\theta \in p$ then $\{\theta_c:c\models \psi\}$ is
\emph{almost normal at $p$} if it is almost normal at $p(x)\times
p(y)$.
\end{enumerate}
\end{definition}

\begin{definition}
 A minimal type $p\in S(A_0)$ is:
\begin{description}
\item[Trivial] if $b\in \acl(Aa_1,\dots,a_n)\iff b\in \bigcup
\acl(Aa_i)$ for every set of parameters $A_0\subseteq A$ and every
$a_1\dots,a_n,b\models p$.
\item[Locally modular] if there is a
set $C\supseteq A_0$ such that
\[
a_1\dots,a_n\ind_{A\cap B}
b_1\dots b_m
\]
for all $a_1\dots a_n$,$b_1\dots,b_m\models p$ with
$A=\acl(Ca_1,\dots,a_n)$ and $B=\acl(Cb_1,\dots,b_m)$. \item[Rich]
if it is seriously minimal and there exist a finite set
$A_0\subseteq A$, an element $b \models p$ with $b\thind_{A_0} A$
and an $A$-definable family $\CF$ of plane curves (almost) normal
at $p$,
 such that $\{f\in \CF : (b,b)\in f\}$ is infinite.
\end{description}

To make the statements in this paper cleaner ``locally modular''
will always mean ``locally modular and non-trivial''.
\end{definition}

We will use the following easy observations which we leave for the
reader to verify:

\begin{remark}\label{easy facts} Let $p$ be a seriously minimal type, then:
\begin{enumerate}
\item If $p$ is rich and $q\supseteq p$ is a non-algebraic
extension then $q$ is rich.

\item If $p\in S(A)$ is rich (resp. non-trivial) and $A_0\subseteq
A$ then $p\upharpoonright A_0$ is rich (non-trivial).

\item If $p$ is non-trivial it has a global non-trivial extension
(see also the proof of Theorem 3 of \cite{HaOnPe}).

\item Every non-algebraic extension of a non-trivial type is
non-trivial.
\end{enumerate}
\end{remark}

A locally modular type cannot be rich, but it is not clear to us
whether, in general, any minimal non-locally modular type is rich.
The problem lies in the existence of normal families in contexts
where there is no obvious candidate for the notion of \emph{germ}
(this problem does not exist for stable, or even Shelah-stable
types).

\begin{remark}\label{rich}
If $T$ is o-minimal then a global non trivial minimal type is rich
if and only if it is not locally modular.
\end{remark}
\begin{proof}
Note that for any $M\models T$ a type $p\in S_n(M)$ is minimal
if and only if $\dim p =1$ if and only if $p$ is seriously
minimal. Moreover, if $\theta(x)\in p$ is a minimal formula, we
may assume without loss that the set it defines is definably
isomorphic to an open interval in $M$. Translating everything
through this definable isomorphism, the analysis of minimal types
in $\CM$ reduces to the analysis of non-algebraic 1-types. Naming
a small model of $T$ we may assume that $\acl(\0)\models T$.

So let $p$ be a non trivial global 1-type, $p_0:=p\upharpoonright
\acl(\0)$ and $a\models p_0$. If $p$ is rich then by the previous
remark so is $p_0$ and therefore $a$ is of type (Z3) in the sense
of \cite{PeSt}, namely on every definable interval $I\ni a$ there
is a definable normal family of plane curves of dimension at least
2. In that case there is some interval $I\ni a$ and a
$T$-definable field on $I$, and $p$ cannot be locally modular. On
the other hand, if $p$ is not rich neither is $p_0$ so $a$ is of
type (Z2) (because it is not trivial) and therefore there is a
definable interval $I\ni a$ such that the structure induced on $I$
is linear, and every type extending $I$ is locally modular. Since
$p$ is a global type extending $I$ it must be locally modular.
\end{proof}

Observe that the last proof actually shows that any rich type is
not locally modular and that, in fact, a non-rich type over an
$\aleph_1$-saturated model is locally modular. We believe that a
direct proof (not using the trichotomy theorem for o-minimal
structures) of Remark \ref{rich} should exist, possibly giving the
result unconditionally (namely for all 1-types, not necessarily
over slightly saturated models). Despite this deficiency, this
result gives us a somewhat cleaner statement of the Trichotomy
theorem for o-minimal structures:

\begin{theorem}[Peterzil-Starchenko]\label{trichotomy}
Let $T$ be an o-minimal structure and $p$ a global 1-type, then
exactly one of the following hold:
\begin{enumerate}
\item $p$ is trivial.

\item $p$ is locally modular and not trivial, in which case it is
the generic type of a reduct, $\mathcal{V}$, of an ordered vector
space over an ordered division ring and $\mathcal V$ is definable.

\item $p$ is a generic type of a definable real closed field.
\end{enumerate}

\end{theorem}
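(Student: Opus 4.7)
The plan is to deduce the theorem from Peterzil-Starchenko's classification of points in o-minimal structures, along the lines already sketched in Remark~\ref{rich}. Fix a minimal formula $\theta(x) \in p$ and a definable bijection of its realisation set with an open interval $I$. After naming a small submodel we may assume $\acl(\0) \models T$, and set $p_0 := p \upharpoonright \acl(\0)$. By Remark~\ref{easy facts}, $p$ is trivial (respectively locally modular, rich) if and only if $p_0$ is, so it suffices to classify non-algebraic $1$-types over the prime model.

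Now let $a \models p_0$ with $a \in I$ and invoke Peterzil-Starchenko's pointwise trichotomy: $a$ is of type Z1 (trivial), Z2 (linear, no field germ through $a$), or Z3 (rich). In the Z1 case, triviality of $a$ in the sense of the induced o-minimal structure transfers directly to triviality of $p_0$, giving (1). In the Z3 case, a definable normal family of plane curves of dimension at least $2$ passes through $a$, and the Peterzil-Starchenko construction produces a $\CM$-definable real closed field on an open interval $J \ni a$; after the initial identification, $p$ is the generic type of this field, yielding (3).

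The heart of the matter is the Z2 case, which will yield (2). From the linear family of germs of definable partial functions through $a$ one extracts, via an o-minimal version of the Hrushovski group configuration theorem, a type-definable torsion-free divisible abelian group $\CG$. Promoting the resulting group chunk to a genuinely $\CM$-definable group and examining the ring of germs of definable endomorphisms fixing a small parameter set, one obtains an ordered division ring $D$ acting on $\CG$; this gives the desired reduct $\mathcal{V}$ of an ordered $D$-vector space, of which $p$ is the generic type. Local modularity of $p$ is then the standard fact that generic types of reducts of ordered vector spaces are locally modular, and non-triviality is inherited from $p_0$.

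The main obstacle is the Z2 step: upgrading the purely local linear data at $a$ to an $\CM$-definable vector-space reduct on a neighbourhood, while controlling parameters tightly enough that its generic type is exactly $p$. This is where the o-minimal group configuration theorem and a careful analysis of the centraliser structure of $\CG$ are used in full strength; everything else in the trichotomy reduces to invoking the point-classification together with the routine transfer of trivial/locally modular/rich between $p$ and its restriction $p_0$.
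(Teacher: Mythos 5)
The paper does not actually prove this statement: it is the Peterzil--Starchenko trichotomy theorem, imported as a black box from \cite{PeSt} and merely restated in the language of global types using the preceding Remark \ref{rich} (which identifies ``rich'' with ``non-locally-modular'' for global non-trivial minimal types). So the intended ``proof'' is a citation, and your proposal is in effect an attempt to re-derive the main theorem of \cite{PeSt} together with the Loveys--Peterzil classification of linear o-minimal structures. Your reductions (identifying a minimal formula with an interval, restricting to $p_0$ over the prime model, invoking the pointwise Z1/Z2/Z3 classification, and reading off cases (1) and (3)) do match what the paper does in Remark \ref{rich}. Note, however, that Remark \ref{easy facts} says nothing about local modularity, so the transfer you need for case (2) --- that linearity of the induced structure on an interval $I\ni a$ forces the \emph{global} type $p$ extending $I$ to be locally modular --- is the argument spelled out in Remark \ref{rich}, not a consequence of Remark \ref{easy facts}. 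You also never address the ``exactly one'' clause, i.e.\ the mutual exclusivity of the three cases.

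The genuine gap is the Z2 case. As written, you pass from ``$a$ is of type Z2'' to ``a definable ordered-vector-space reduct on a neighbourhood of $a$ whose generic is $p$'' via ``an o-minimal version of the Hrushovski group configuration theorem'' plus an inspection of germs of endomorphisms. This compresses the two hardest ingredients of the actual proof into a sentence: first, that Z2 implies every definable set in a neighbourhood of $a$ is piecewise affine with respect to the constructed group operation (the linearity analysis of Peterzil--Starchenko, which is \emph{not} a formal consequence of the one-dimensionality of the family of curves through $(a,a)$); and second, that a linear o-minimal structure is a reduct of an ordered vector space over an ordered division ring (Loveys--Peterzil). Neither step is routine and no argument is offered for either. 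If you are permitted to cite these results, then the entire theorem is available by citation anyway --- which is exactly how the paper uses it; if you are not, the Z2 paragraph is an outline of a proof strategy rather than a proof.
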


\begin{remark}\label{porism}
The analogue of Remark \ref{rich} for minimal sets in stable
theories is well known (see Subsection 1.2.3 of \cite{Pi}). Being
local in nature, the proof automatically extends to seriously minimal
stable types (over algebraically closed sets) in the present
context, because of the uniqueness of non-forking extensions.
Consequently if $p(x)$ is a non locally modular seriously minimal
stable type there is some formula (over some localisation of $p$
at a generic $e$) $\phi(x_1,x_2;y_1,y_2)$ such that for any
$b_1,b_2\models p^{\otimes 2}$ and $c_1,c_2$ of $p\times p$ if
$\bar c\not\in \acl(\bar b)$ then there are infinitely many
elements in $\phi(x_1,x_2;b_1,b_2)\vartriangle
\phi(x_1,x_2;c_1,c_2)$ realizing $p\times p$. In particular,
\[
\{\exists y (\neg \phi(x,y;b_1,b_2)\vee \neg
\phi(x,y;c_1,c_2))\}\cup p
\]
is a non algebraic (and therefore the) non forking extension of
$p(x)$. By the uniqueness of the non-forking extension,
\[
p(x)\cup \left\{\exists y \left(\phi\left(x,y;b_1,b_2\right)\wedge
\phi\left(x,y;c_1,c_2\right)\right)\right\}
\]
is algebraic.
\end{remark}

The rest of this paper is devoted to proving a partial analogue of
the trichotomy of Theorem \ref{trichotomy} for theories
interpretable in o-minimal structures, though most of the work
will be done in the more general context of minimal types in
super-dependent theories.

\subsection{The geometry of minimal stable types}

The geometric analysis of minimal (and, more generally, regular
types) is a powerful tool in the investigation of stable theories.
In this section we show that the basic results of this analysis
remain valid for minimal stable types in dependent rosy theories.
Indeed (a posteriori) this is not surprising, as we will show that
in the geometrically non-trivial case minimal stable types are
seriously stable. Most of the proofs in this section vary between
automatic and straightforward adaptations of the corresponding
ones in the stable context.

Our first step is a generalisation of a theorem of Buechler (see
\S 1.3.1 of \cite{Pi}):

\begin{proposition}\label{seriously minimal}
If $p\in S(A)$ is a minimal non-trivial stable type in a super-dependent theory. Then $p(x)$ is
seriously minimal.
\end{proposition}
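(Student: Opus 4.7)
The plan is to adapt Buechler's classical dichotomy for non-trivial $\ur$-rank one stable types. Three results from Section \ref{stable section} make the adaptation feasible: Corollary \ref{preserving th-rank} yields $\ur(p)=\uth(p)=1$; Lemma \ref{definability} shows that $p$ is stationary over $\acl(A)$; and Lemma \ref{non thorking definable} gives that every non-$\tho$-forking extension of $p$ is definable over $\acl(A)$. Together these make stable minimal types over algebraically closed bases behave very much like $\ur$-rank one stationary types in a stable theory.

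The first step is to reduce to a planar configuration. Assume $A=\acl(A)$, and by non-triviality pick realizations $a_1,\ldots,a_n,b\models p$ (replacing $A$ by a larger base if necessary) with $b\in\acl(A,\bar a)\setminus\bigcup_i\acl(A,a_i)$ and $n$ minimal. Minimality forces $b\notin\acl(A,\bar a\setminus\{a_i\})$ for every $i$; together with $\uth(p)=1$ this makes $a_1,\ldots,a_n$ $\thind$-independent over $A$ and $b$ $\thind$-independent over $A$ from any proper sub-tuple of $\bar a$. Replacing $A$ by $\acl(A,a_1,\ldots,a_{n-2})$ and relabelling $a:=a_{n-1}$, $c:=a_n$, I obtain $a,b,c\models p$ pairwise $\thind$-independent over $A$ with $b\in\acl(A,a,c)$; by exchange (using $\uth(p)=1$) each of $a,b,c$ lies in the algebraic closure of $A$ together with the other two.

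I then pick $\phi_0(x,y,z)$ over $A$ with $\models\phi_0(b,a,c)$ and $\phi_0(x,a,c)$ finite, and use Claim \ref{NOP} to find $\theta\in p$ such that $\phi(x,y,z):=\phi_0\wedge\theta(x)\wedge\theta(y)\wedge\theta(z)$ has NOP. The collection $\{\phi(x,a',c'):(a',c')\ \thind\text{-independent realisations of}\ p\text{ over } A\}$ is then a definable family of plane curves through generic realisations of $p$. Suppose, towards a contradiction, that every formula in $p$ has $\Tho$-rank at least two. By Lemma \ref{non thorking definable} the canonical base of $\tp(b/Aac)$ lies in $\acl(Aac)$, and by Lemma \ref{definability} stationarity gives a uniform $A$-definition for this family of curves. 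The genericity relations among $a,b,c$ then let me extract, via an appropriate definable projection of $\phi$ back into the $x$-coordinate, a formula $\psi(x)\in p$ of $\Tho$-rank one, contradicting the assumption. Hence some formula in $p$ is minimal, as required.

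The main obstacle is precisely this last extraction step: producing a formula in $p$ of $\Tho$-rank exactly one from the planar configuration. In stable theories this is achieved through the local Shelah ranks $D_\phi$, whose finiteness on the stable part of the theory makes rank-one formulas easy to isolate. In the super-dependent setting only the global $\Tho$-rank is available, so the argument has to combine it with the NOP-ness of $\phi$ (from Claim \ref{NOP}), the stationarity of $p$ (Lemma \ref{definability}), and the $\acl$-definability of non-$\tho$-forking extensions (Lemma \ref{non thorking definable}) to keep $\Tho$ and $\ur$ compatible on the relevant $\phi$-formulas throughout the projection. Orchestrating this compatibility is the main technical content of the proof.
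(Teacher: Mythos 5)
Your reduction to a planar configuration $a,b,c\models p$ pairwise $\thind$-independent with each in the algebraic closure of the other two (plus $A$) matches the paper's setup, and the preparatory results you cite (stationarity from Lemma \ref{definability}, compatibility of $\thind$ and $\ind$, $\acl(A)$-definability of non-\th-forking extensions) are exactly the right tools. But the argument stops precisely where the theorem has to be proved. You frame the extraction of a $\Tho$-rank-one formula as a contradiction argument, then acknowledge that you do not know how to carry out this extraction; ``orchestrating this compatibility'' is not a proof step. Nothing in what you wrote forces the hypothetical ``every formula in $p$ has $\Tho$-rank $\ge 2$'' to fail, so there is no contradiction, and the claim is unproved.

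The paper's route through this obstacle is concrete and worth noting. First, it invokes a fact (Fact 4.4 of \cite{EaOn}) that $p$ contains a formula $\theta$ realizing the $\Tho$-rank of $p$, say $\Tho(\theta)=\Tho(p)=\alpha$; this replaces the local Shelah ranks $D_\phi$ you correctly identify as missing. Second, it forms $\chi(x,y):=\exists z\,(\phi(x,y,z)\wedge\theta(z))$ and takes $\psi(y)$ to be the $\chi$-\emph{definition} of $\tp(a/Ab)$ --- a Buechler-style move. By Lemma \ref{definability}, $\psi$ is over $A$, and $\models\psi(b)$. Third, it proves $\Tho(\psi)=1$ by a direct Lascar-type rank chase: given $b'\models\psi$ and $a'\models\tp(a/Ab)|_{b'}$, one gets $c'$ with $\models\phi(a',b',c')\wedge\theta(c')$; the inequalities $\tho(a'/A)=\tho(a'/Ab')=\tho(c'/Ab')\le\tho(c'/A)\le\alpha$ force $c'\thind_A b'$, whence $\uth(b'/A)=\uth(b'/Ac')=\uth(a'/Ac')\le 1$. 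No contradiction framing is needed; the rank computation just gives the bound. You should make sure the formula $\theta$ exists (citing the appropriate fact) and then perform this calculation explicitly; without it your proof is incomplete at the decisive step.
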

\begin{proof}
Let $p$ be as in the statement of the proposition. By a standard
argument we can reduce a minimal example of non-triviality of
$p(x)$ to a definable extension $p_1\supseteq p$ and realizations
$a,b,c\models p_1$ pairwise independent. We can also find a
formula $\phi(x,y,z)$ realized by $a,b,c$ and such that $a'\in
\acl(b'c')$, $b'\in \acl(c'a')$ and $c'\in \acl(a'b')$ for any
$a',b',c'\models \phi(x,y,z)$. By the Open Mapping Theorem, as
$p_1$ is a (stable) non-forking extension of $p$ if $\psi\in p_1$
is minimal there is $\sigma\in p$ which is a finite positive
Boolean combination of $A$-translates of $\psi$, and therefore
itself minimal. Thus it is enough to prove the claim for $p_1$; to
simplify the notation we will assume that $p=p_1$.

Notice that if some extension of $p$ to $\acl(A)$ is
seriously stable then so must be $p$. We can therefore assume
without loss of generality that $A=\acl(A)$.

By Fact 4.4 in \cite{EaOn} there exists $\alpha < \infty$ such
that $\tho(p)=\alpha$ and there is $\theta(x)\in p(x)$ with
$\tho(\theta(x))=\alpha$. Let
\[
 \chi(x,y):=\exists z (\phi(x,y,z)\wedge \theta(z))
\]
and let $\psi(y)$ be the $\chi$-definition of $\tp(a/Ab)$; by
Claim \ref{definability} $\psi$ is over $A$, and clearly $\models
\psi(b)$. We will prove that $\tho(\psi(y))=1$; i.e.
$\uth(b'/A)\leq 1$ for all $b'\models \psi$. So let $b'\models
\psi(y)$ and $a'\models \tp(a/Ab)|_{b'}$.

By definition $\models \chi(a',b')$ which implies the existence of
$c'\models \phi(a',b',z)\wedge \theta(z)$. Since $\tp(a'/Ab')|_A=
\tp(a'/A)$ Claim \ref{preserving th-rank} gives
$\alpha=\tho(a'/A)= \tho(a'/Ab')$. On the other hand $a'\in
\acl(c'b'A)$ and $c'\in \acl(a'b'A)$ so Proposition 4.6 in
\cite{EaOn} gives:
\[
\alpha=\tho(a'/A)= \tho(a'/Ab')= \tho(c'/Ab')\leq
\tho(c'/A)\le\alpha.
\]
Therefore $\tho(c'/Ab')= \tho(c'/A)$ and by \ref{preserving th-rank 2} $c'\thind_A b'$; it
follows that
\[
\uth(b'/A)=\uth(b'/Ac')=\uth(a'/Ac')\leq \uth(a'/A)=1.
\]
\end{proof}

\begin{remark}
 Easy examples (see \cite{HaOn}) show that the results of the last proposition are best possible. Trivial minimal stable types exist in theories where all definable sets are unstable.
\end{remark}

With this in hand, our aim is to show that non-trivial minimal
stable types are seriously stable. The work splits between the
locally modular case - to which the key is Hrushovski's
classification of locally modular regular types - and the non
locally modular case, which is solved by Buechler's dichotomy for
minimal types.

We start with Hrushovski's group recognition theorem in the
locally modular case. Although the proof translates word for word
into the present context, we give a general overview.
This is done not only for the sake of completeness and the
clarification of a few delicate points in the translation, but
mostly because we find it worthwhile pointing out tools appearing in
the proof, available in the present context, and potentially
useful in the future.

Recall that if $p$ is a stable type and $f$ a definable function
whose domain contains $p$ then the \emph{germ of $f$ at $p$} is
the class of $f$ under the (definable on families of definable
functions) equivalence relation $h\sim g$ if $h(a)=g(a)$ for all
$a\models p|h,g$. Note that for a stable type $p$ in a dependent
rosy theory, if $g$ is a germ, $a\models p|g$ then $g(a)$ is well
defined (i.e. does not depend on the choice of the
representative).

The first step is finding a type $p_0$ (of which $p$ is an almost
extension), and an invertible germ with domain $p_0$:

\begin{lemma}\label{type change}
Let $p$ be a minimal stable type in a dependent rosy theory. Assume that
$p$ is locally modular and non-trivial. Then there is a type $p_0$
of which $p$ is an almost extension, a type $q_0$ and an
invertible germ $\sigma:p_0\to q_0$ such that the type of $\sigma$
is minimal and stable.
\end{lemma}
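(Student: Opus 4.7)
The plan is to adapt Hrushovski's classical construction of germs from locally modular non-trivial regular types in stable theories (see, e.g., \S 1.3.2 of \cite{Pi}) to the present setting. The key stability-theoretic tools used in that argument --- stationarity of stable types over algebraically closed sets, existence of canonical bases, and uniqueness of non-forking extensions --- have all been established for stable types in dependent rosy theories in Section \ref{stable section}, in particular in Lemmas \ref{definability} and \ref{non thorking definable}. The only conceptual adjustment is the bookkeeping of a possible finite quotient, which is precisely what is encoded in the ``almost extension'' formalism of Definition \ref{o-minimal almost extension}.

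The first step is to extract, using non-triviality of $p$ together with Remark \ref{easy facts}, realizations $a,b,c\models p$ pairwise \th-independent over the parameter set $A$ yet with $c\in\acl(Aab)\setminus\acl(Aa)$. A Lascar-style rank calculation together with minimality of $p$ makes this configuration symmetric: $a\in\acl(Abc)$ and $b\in\acl(Aac)$ as well. The formula witnessing $c\in\acl(Aab)$, cut down by a bound on the number of solutions, isolates the graph of a partial function $f_b$ with $f_b(a)=c$. Let $\sigma$ be the germ of $f_b$ at the stationary stable type $\tp(a/Ab)$, and replace $b$ by a canonical parameter $e$ for $\sigma$; the symmetric role of $a,c$ in the configuration produces an inverse germ of $\sigma$ also defined over $e$, so $\sigma$ is invertible.

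The main technical step is to verify that $\tp(e/A)$ is itself minimal and stable. Stability is inherited from the stability of $\tp(ac/Ab)$: the parameter $e$ lives in the definable closure of $\acl(A)$ together with the canonical base of this stable type, and stability of types is preserved under algebraic closure and finite Boolean operations (Fact \ref{stable types shelah}). Minimality --- that is, $\uth(e/A)=1$ --- is where local modularity enters irreducibly: as in the stable-theoretic proof, local modularity forces the canonical base of $\tp(a,c/Ab)$ to be inter-algebraic over $A$ with a single realization of $p$, hence of \th-rank one. Lemma \ref{non thorking definable} is invoked here to ensure that the canonical base of the non-\th-forking stable type in question indeed lies in $\acl$ of the parameter set, placing us in the situation where the classical argument applies essentially verbatim.

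Finally, define $p_0:=\tp(a/Ae)$ and $q_0:=\tp(c/Ae)$, so that $\sigma$ is an invertible germ from $p_0$ to $q_0$ whose parameter type is minimal and stable. The statement that $p$ is an almost extension of $p_0$ records the possibility that the germ $\sigma_e$ is determined not by an individual realization of $p$ but only by its orbit under a finite $e$-definable equivalence relation --- the finite algebraic branches of the defining formula. The main obstacle in the whole plan is the minimality computation of the previous paragraph: this is the single place where local modularity is used in an essential way, and where the results of Section \ref{stable section} are most crucial; the remainder is a routine transcription of the classical stable proof.
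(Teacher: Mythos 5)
Your construction is correct in outline but takes a genuinely different route from the paper's. The paper does not take the germ of the non-triviality configuration $a\mapsto c$ directly: it first passes to pairs, forming the rank-one type $q=\stp(a_1,a_2/e)$ of a ``curve'' through pairwise dependent realizations, picking an independent point $(b_1,b_2)$ of the same curve, and then using modularity in an essential way to produce $d\in\acl(a_1,b_2)\cap\acl(a_2,b_1)$ and $d_1\in\acl(a_1,b_1)\cap\acl(d,e)$; the germ is then a map between types of \emph{pairs}, which is exactly why $p$ is only an \emph{almost} extension of $p_0$ and why $\tp(\sigma)$ is minimal ($\sigma$ is interalgebraic over $e$ with a single generic point of the rank-one type of pairs). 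Your direct version works for \emph{minimal} types because exchange holds on realizations of $p$, so the finite-to-finite correspondence $a\leftrightarrow c$ can be symmetrized and quotiented into an invertible germ; this is simpler than the paper's argument, which is written to track Hrushovski's proof for regular types. Two points should be flagged. First, you locate the essential use of local modularity in the wrong place: in your setup the minimality of $\tp(e/A)$ is automatic, since the germ of $f_b$ is $\acl(A)b$-definable, hence $e\in\dcl(\acl(A)b)$ and $\uth(e/A)\le\uth(b/A)=1$, and $e\notin\acl(A)$ because otherwise $c\in\acl(Aa)$; no one-basedness is needed there. Local modularity is genuinely needed only at the next stage (Lemma \ref{final type}), to keep the composed germs $\tau^{-1}\sigma$ inside a rank-one family, and the paper's detour through pairs and the element $d$ front-loads part of that work. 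Second, your $p_0=\tp(a/Ae)$ is a finite quotient of a non-forking \emph{extension} of $p$, so the ``almost extension'' relation comes out in the opposite direction from the statement ($\dom(p_0)\supseteq\dom(p)$ rather than the reverse); this is harmless for Proposition \ref{loc mod}, which only needs $p_0$ minimal, stable and non-orthogonal to $p$, but it does not literally match Definition \ref{o-minimal almost extension} and should be reconciled with the intended downstream use.
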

\begin{proof}
We give some details of the proof. The key in the translation of
the present lemma into our context is noting that $p$ is an almost
extension of $p_0$ (and therefore the latter, as well as $\tp(\sigma)$,
are stable).

Adding constants to the language and extending $p$ accordingly we
may assume that in fact $p$ is modular. Since it is non-trivial we
can find $(a_1,a_2,e)$, pairwise independent realizations of $p$,
such that  $(a_1,a_2,e)$ is not independent. Obviously,
$q:=\stp(a_1,a_2/e)$ is stable so we can find $(b_1,b_2)\models
q|a_1a_2$. By modularity we can find $d,d_1\models p$ such that
$d\in \acl(a_1,b_2)\cap \acl(a_2,b_1)$ and
$d_1\in\acl(a_1,b_1)\cap\acl(d,e)$. Replacing $d_1$ with the code
of the (finite) set of its conjugates over $(a_1,a_2,b_1,b_2)$ we
may assume that $d\in \dcl(a_1,a_2,b_1,b_2)$. By a similar
argument, we may assume without loss that $(d,d_1)\in
\dcl(a_1,a_2,b_1,b_2,e)$.

Let $\bar d=(d,d_1)$, $\a=(a_1,a_2)$ and $\b=(b_1,b_2)$. Set
$p_0:=\tp(\b/e)$ and note that $p$ (or rather, its non-forking
extension) is an almost extension of $p_0$ (divide by the
equivalence relation $(b_1,b_2)\sim(b_1',b_2')$ if $b_1=b_1'$). So
we can find an $e$-definable function $f$ such that $\bar
d=f(\a,\b)$. Define an equivalence relation $x_1\sim_R x_2$ if
$f(y,x_1)=f(y,x_2)$ for $y\models \bar p|ex_1x_2$. By stability of
$p_0$ this is a definable equivalence relation, and because
$f(\a,\b')=\bar d$ implies that $\b'\in\acl(\b)$ it has finite
classes (on realizations of $p_0$). This gives us a family of
invertible germs $f(y,x)$ from $p_0/\sim_R$ to $r_0:=\stp(\bar
d/e)$. Setting $\sigma$ to be the germ of $f(x,\b')$ for a generic
$\b'\models p_0$ the lemma is proved, and since $\sigma$ is interalgebraic with $\b'$ (over $e$) we also get the stability of $\tp(\sigma/e)$.
\end{proof}

Now that the ground is set, and we do no longer have to change the
types with which we are working, the rest of the proof translates
automatically. We remind the remaining stages in the proof. Denote
$s_0:=\stp(\sigma/e)$ and $s:=\stp(\tau^{-1}\sigma/e)$ for
$\tau\models s|\sigma$. Observe that $s$ is a stable type: $s_0$
is stable - $p$ is an almost extension of $s_0$ - and therefore so
is $s_0^{-1}$, and also any type extending $s_0(x)\times
s_0^{-1}(y)$. We get a collection of germs of permutations of
$p_0$ forming a generic semi-group on $p_0$:

\begin{lemma}\label{final type}
$s$ is a stable type and $\ur(s/e)=1$. Moreover, if
$\sigma_1,\sigma_2$ are independent realizations of $s$ then
$\sigma_1\sigma_2\models s|\sigma_i$.
\end{lemma}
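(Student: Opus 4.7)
The plan is to exploit the finite-rank super-rosy setting, where $\uth = \ur$ on stable types (Corollary \ref{preserving th-rank}) so that Lascar rank additivity is at our disposal, together with the stationarity of stable types over algebraically closed sets (Lemma \ref{definability}). Without loss we replace $e$ by $\acl(e)$.

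First, the stability of $s$: the product $\tp(\tau,\sigma/e) = s_0\otimes s_0\mid e$ is stable, being an independent product of two copies of the stable type $s_0$; since $\tau^{-1}\sigma\in\dcl(e,\tau,\sigma)$, the type $s$ is the image of $\tp(\tau,\sigma/e)$ under an $e$-definable map, and stability is preserved under such images, as any OP configuration in the image pulls back to one in the source.

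Second, to compute $\ur(s/e)$: from the construction in Lemma \ref{type change}, $\sigma$ is interalgebraic over $e$ with $\b'\models p_0$, so $\ur(\sigma/e)=\ur(\tau/e)=1$, and $\tau\ind_e\sigma$ gives $\ur(\sigma,\tau/e)=2$. The pairs $(\sigma,\tau)$ and $(\sigma,\tau^{-1}\sigma)$ are interdefinable over $e$, so $\ur(\sigma,\tau^{-1}\sigma/e)=2$; additivity forces $\ur(\tau^{-1}\sigma/e\sigma)=1$, whence $\ur(\tau^{-1}\sigma/e)\geq 1$. For the reverse inequality I would show $\sigma\ind_e\tau^{-1}\sigma$: given a fresh $\sigma'\models s_0$ independent from $\tau^{-1}\sigma$ over $e$, set $\tau' := \sigma'(\tau^{-1}\sigma)^{-1}$; stationarity of $s_0$ together with the generic-germ calculus yields that $(\sigma',\tau')\models s_0\otimes s_0\mid e$ with $\tau'^{-1}\sigma' = \tau^{-1}\sigma$, exhibiting infinitely many $e$-conjugates of $\sigma$ over $\tau^{-1}\sigma$.

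Third, for the product property, given independent $\sigma_1,\sigma_2\models s$ pick $\rho\models s_0$ generic over $e\sigma_1\sigma_2$ and factor $\sigma_1\sigma_2 = \tau_0^{-1}\rho$ with $\tau_0 := \rho(\sigma_1\sigma_2)^{-1}$. The same stationarity argument shows $\tau_0\models s_0$ and $\tau_0\ind_e\rho$, so $\sigma_1\sigma_2\models s$; the independence $\sigma_1\sigma_2\ind_e\sigma_i$ then drops out of a routine rank computation on the triple $(\sigma_1,\sigma_2,\sigma_1\sigma_2)$, in which any two entries determine the third over $e$ and each entry has $\ur$-rank $1$.

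The main technical obstacle is justifying, at each translation step, that a product of germs of stable types remains of the requisite stable type: in the classical stable case this is standard stationarity calculus for generic germs, and the proof transfers verbatim to our setting because, having shown that all the types in play are stable, we have at our disposal Lemma \ref{definability} (uniqueness of non-forking extensions of stationary stable types) together with the finite-rank Lascar additivity coming from Corollary \ref{preserving th-rank}.
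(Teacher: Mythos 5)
Your proposal is correct and takes essentially the same route as the paper: the paper gives no proof of this lemma beyond the one-line observation that $s$ is stable (as a type obtained from an extension of the stable $s_0(x)\times s_0^{-1}(y)$), deferring the rank computation and the genericity of composition to Hrushovski's classical germ argument, which is exactly the stationarity-plus-Lascar-additivity calculus you spell out using Lemma \ref{definability} and Corollary \ref{preserving th-rank}.
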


Once this is noted, the Hrushovski-Weil construction of the group
of germs out of a generic semigroup (on a stationary stable type)
goes through almost unaltered, and we get:

\begin{proposition}\label{ggroup}
Let $p$ be a stable type in a dependent rosy theory. Assume that
$p$ is stationary and that there exists a definable partial binary
function $*$ such that $a*b$ is defined for independent
$a,b\models p$ and satisfying:
\begin{enumerate}
 \item $*$ is generically transitive.
\item $a*b\models p|a$  and $a*b\models p|b$ for generic
$a,b\models p$.
\end{enumerate}
Then there exists a definable group $(G,\cdot)$ and a definable
embedding of $p$ into $G$ such that for independent $a,b$ the
image of $a*b$ is the $G$-product of the images of $a,b$.
Moreover, if $p$ is minimal, $G$ can be taken minimal.
\end{proposition}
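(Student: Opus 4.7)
The plan is to follow the classical Hrushovski--Weil group-chunk construction, adapted to the dependent rosy setting. The two ingredients from the stable world that we need are (a) that germs of definable functions at a stable stationary type behave well, and (b) that non-forking extensions of stable types over algebraically closed sets are unique (Lemma \ref{definability}). Both are available to us because $p$ is stable and stationary. Throughout, I would work over a base set $A = \acl(A)$ over which $p$, $*$ and the relevant parameters are defined.

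First I would convert the generic semigroup operation into generically invertible germs. For $a\models p$, let $\lambda_a$ denote the germ at $p$ of the partial function $x\mapsto a*x$. Using generic transitivity together with condition (2), for independent $a,b\models p$ there exists $c\models p|_{a,b}$ with $a*c=b$; stationarity makes such a $c$ unique up to the germ equivalence, so $\lambda_a$ has a well-defined inverse germ $\lambda_a^{-1}$, and both are coded by elements of $T^{\eq}$ thanks to stability. Moreover, condition (2) also gives that $\lambda_a$ sends the generic of $p$ to the generic of $p$, so compositions $\lambda_a\circ\lambda_b^{-1}$ are again germs of permutations of $p$.

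Next I would form the collection
\[
G_0 := \bigl\{\,\lambda_a\circ\lambda_b^{-1} : a,b\models p, \; a\ind_A b\,\bigr\}
\]
and show it carries a generic group structure. Closure under composition is the heart of the matter: given germs $g_1=\lambda_{a_1}\lambda_{b_1}^{-1}$ and $g_2=\lambda_{a_2}\lambda_{b_2}^{-1}$ one uses the stationarity of $p$ and a suitable choice of an independent realization of $p$ to rewrite $g_1g_2$ in the form $\lambda_c\lambda_d^{-1}$; this is the standard Hrushovski--Weil manipulation and it goes through verbatim because in our context (Lemma \ref{non thorking definable} and Lemma \ref{definability}) all the relevant conditional types over $\acl(A\text{-parameters})$ are determined by their non-forking extensions. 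Identity and inverses are immediate: $\lambda_a\lambda_a^{-1}$ gives the identity and $(\lambda_a\lambda_b^{-1})^{-1}=\lambda_b\lambda_a^{-1}$. Generic associativity follows from associativity of $*$ where it is defined.

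Finally, I would apply Weil's theorem on pre-groups (in the form valid for stable stationary types; here stability is used to guarantee that $G_0$ is $\infty$-definable and that the generic type of $G_0$ is stationary and stable) to produce a genuine definable group $(G,\cdot)$ together with a generic embedding $\iota: p\hookrightarrow G$ sending $a\mapsto \lambda_a$ (modulo the identification $\lambda_a = \lambda_a\lambda_e^{-1}\cdot\lambda_e$ for a fixed base point). The image of $a*b$ becomes $\iota(a)\cdot\iota(b)$ by construction. For the minimality clause, observe that $\uth(\iota(a)/A) = \uth(a/A)=1$ when $p$ is minimal, so the generic of $G$ has $\uth$-rank $1$; replacing $G$ by the connected component of the generic type and quotienting by any infinite proper definable subgroup would contradict minimality, so $G$ itself can be taken minimal. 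The main obstacle I anticipate is the bookkeeping in the closure-under-composition step, where one must repeatedly invoke stationarity to replace representatives by independent conjugates and check that germ equality is preserved; this is exactly where the stability of $p$ (rather than mere $\th$-minimality) is essential, and it is what forces the hypothesis of the proposition.
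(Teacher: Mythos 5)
Your outline follows the same broad route the paper takes: build germs of the generic translations $\lambda_a$, form the type-definable group chunk $G_0$ of compositions $\lambda_a\circ\lambda_b^{-1}$, and invoke a Weil--Hrushovski theorem to pass to a definable group $G$. The paper agrees with you that building $G_0$ is a ``word by word translation'' of the stable argument, using exactly the uniqueness of non-forking extensions (Lemma~\ref{definability}).

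The genuine gap is in your final step. You write that you would ``apply Weil's theorem on pre-groups (in the form valid for stable stationary types; here stability is used to guarantee that $G_0$ is $\infty$-definable and that the generic type of $G_0$ is stationary and stable).'' But the Weil--Hrushovski theorem (Proposition 1.2 of \cite{Hr4}) that produces a definable supergroup from an $\infty$-definable one is proved inside a \emph{stable} theory; here the ambient theory is only dependent and rosy. Knowing that one type --- the group-chunk generic --- is stable does not by itself license the theorem. What is actually needed is that $G_0$ is a stable type-definable \emph{set}, i.e.\ that \emph{every} type extending it is stable (so that the rest of the analysis sees a stable environment). This is not automatic; it is exactly what the paper supplies in Lemma~\ref{generic stable}: in a dependent rosy theory a type-definable group with one stable $\th$-generic is stable in full. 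The paper also notes that stability of $G_0$ is needed even to promote the semigroup $G_0$ to a group, not only for the passage to the definable supergroup $G$. Your proposal does not identify, cite, or reprove anything playing the role of Lemma~\ref{generic stable}, and without it the application of the group recognition theorem is unjustified.
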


The proof of Proposition \ref{ggroup} (in the stbale setting) is usually achieved in two
parts. In the first an $\infty$-definable (semi) group with the
desired properties is constructed, and then it is shown that an
$\infty$-definable group has a definable supergroup. In the present case, finding the
$\infty$-definable semi-group $G_0$ is a word by word translation
of the original proof. To show that it is in fact a group and to
get the definable supergroup $G\ge G_0$ we need to show that $G_0$
is stable, allowing us to use Hrushovski's original construction
of $G$ (Proposition 1.2 of \cite{Hr4}). That both $G_0$ and $G$
obtained above are stable, will follow from Lemma \ref{generic
stable} building on work in \cite{EaKrPi}. We start with some
basic definitions and facts from that paper.

\begin{definition}\label{definition th-generic}
Let $(G,\cdot)$ be a rosy group. A type $p(x)\in S(A)$ extending
$G(x)$ is \emph{\th-generic} if $a\cdot b\thind A,b$ and $b\cdot
a\thind A,b$ whenever $a,b\models G$ with $a\models p$ and
$a\thind_A b$.
\end{definition}

The following are the basic facts about \th-generic types which
can be found in Lemma 1.12 and Theorem 1.16 in \cite{EaKrPi}.

\begin{fact} \label{facts th-generic}
Let $(G,\cdot)$ be a rosy group defined over $\0$. Then the
following hold.

\begin{enumerate}
\item For any $A$ there is a \th-generic type of $G$ over $A$.

\item Let $a,b\in G$. If $\tp(a/A)$ is \th-generic and $b\in
\acl(A)$ then $\tp(a\cdot b/A)$ is \th-generic.

\item If $p\in S(A)$, $B\subseteq A$, and $\tp(a/A)$ is
\th-generic then so is $\tp(a/B)$.

\item If $a\thind_A b$ and $\tp(a/A)$ is \th-generic, then so is
$\tp(a/A,b)$.

\end{enumerate}
\end{fact}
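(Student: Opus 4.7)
The plan is to verify (1)--(4) using the axiomatic properties of \th-independence (symmetry, transitivity, monotonicity, extension, and invariance under algebraic closure), following the template of the corresponding classical proofs for generic types in stable groups.

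For (2), the key observation is that since $b\in\acl(A)$, the elements $a$ and $a\cdot b$ are interalgebraic over $A$; hence for any $c\in G$, $a\cdot b\thind_A c$ iff $a\thind_A c$, and $\acl(A,c)=\acl(A,b\cdot c)$. Writing $(a\cdot b)\cdot c=a\cdot(b\cdot c)$ and applying \th-genericness of $\tp(a/A)$ to the independent element $b\cdot c$ gives $(a\cdot b)\cdot c\thind A,c$, and the argument for $c\cdot(a\cdot b)=(c\cdot a)\cdot b$ is symmetric.

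For (4), suppose $a\thind_A b$, $\tp(a/A)$ is \th-generic, and $a\thind_{Ab} c$ for some $c\in G$. By transitivity $a\thind_A bc$, hence $a\thind_A c$; \th-genericity of $\tp(a/A)$ yields $a\cdot c\thind A,c$. Since $a\cdot c$ is interdefinable with $a$ over $c$ and $a\thind_{Ac} b$ (from $a\thind_A bc$ by monotonicity), we get $a\cdot c\thind_{Ac} b$; combining this with $a\cdot c\thind A,c$ via transitivity yields $a\cdot c\thind A,b,c$. The case of $c\cdot a$ is identical. For (3), the proof uses an invariance-and-extension argument: given $a\models p\upharpoonright B$ and $d$ with $a\thind_B d$, one finds after an automorphism over $B$ a realization of $p$ and a suitable conjugate of $d$ (using the extension axiom to ensure independence over $A$), reducing the statement to \th-genericness of $p$ over $A$.

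The main obstacle is (1), the existence of a \th-generic type, because it requires a global construction rather than local manipulations with \th-independence. Without an \emph{a priori} assumption of bounded \th-rank on $G$, one cannot just pick a type of maximal rank. The strategy --- as executed in \cite{EaKrPi} --- is to form the partial type $\pi(x)$ over $A$ consisting of all $A$-formulas $\phi(x)\vdash G(x)$ that do not \th-fork over $A$, show via a translation argument that $\pi$ is consistent (using that $G(x)$ itself does not \th-fork over $A$, and that non-\th-forking is suitably stable under the group operation), and then verify that every completion of $\pi$ to a complete type over $A$ is \th-generic. The technical core of (1) lies in the translation argument, which crucially exploits the geometric character (symmetry, invariance under $\acl$, and finite character) of \th-independence in rosy theories.
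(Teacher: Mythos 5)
The paper itself gives no proof here: the statement is quoted verbatim as Lemma 1.12 and Theorem 1.16 of \cite{EaKrPi}, so there is no internal argument to compare yours against. Judged on its own terms, your treatment of (2), (3) and (4) is correct and is exactly the standard manipulation with the axioms of \th-independence: for (2) the interalgebraicity of $a$ and $a\cdot b$ over $A$ together with $\acl(A,c)=\acl(A,b\cdot c)$; for (3) replacing $c$ by some $c'\equiv_{Ba}c$ with $c'\thind_{Ba}A$ and using base monotonicity and symmetry to reduce to genericity over $A$; for (4) left transitivity applied to $a\cdot c\thind A,c$ and $a\cdot c\thind_{Ac}b$.

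The genuine gap is in your description of (1). The set $\pi(x)$ of \emph{all} $A$-formulas implying $G(x)$ that do not \th-fork over $A$ is in general not a consistent partial type: if $G=\phi\sqcup\psi$ is an $A$-definable partition into two non-\th-forking pieces (e.g.\ $x>0$ and $x\le 0$ in the additive group of a real closed field), then $\pi$ contains two contradictory formulas. Worse, even a complete type every formula of which is non-\th-forking over $A$ need not be \th-generic in the sense of Definition \ref{definition th-generic}: in $(\Rr^2,+)$ the type of $(a,0)$ with $a$ generic consists of formulas over $\0$, hence none of them \th-forks over $\0$, yet adding $(0,b)$ with $b\thind a$ produces $(a,b)\nthind b$. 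The correct object is the family of formulas \emph{some translate of which} \th-forks over $\0$; the substance of the existence proof is that this family is a proper ideal (this is where the translation argument and symmetry of $\thind$ really enter), so a complete type avoiding it exists and is then checked to be \th-generic. Alternatively, in the only case this paper actually uses (groups of finite $\uth$-rank, as in Lemma \ref{generic stable}), one may simply take $p$ with $\uth(p)=\uth(G)$ and verify genericity from the Lascar inequalities. As written, the step ``$\pi$ is consistent and every completion of $\pi$ is \th-generic'' is false for the $\pi$ you define.
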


\begin{lemma}\label{generic stable}
Let $(G,\cdot)$ be a rosy dependent type-definable group. Then the
following are equivalent.

\begin{enumerate}
\item Some \th-generic type $p(x)$ of $G$ is stable.

\item Every type $p(x)$ extending $G$ is stable.

\item $G$ is stable.
\end{enumerate}

In particular, If $G$ is a super dependent group then $G$ is super
stable if and only if some \th-generic type of $G$ is stable.
\end{lemma}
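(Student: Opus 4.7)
The plan is to establish the cycle $(2)\Leftrightarrow(3)$, $(3)\Rightarrow(1)$, and $(1)\Rightarrow(3)$, with only the last requiring real work. The equivalence $(2)\Leftrightarrow(3)$ is the compactness characterization of stability of a set recorded right after the definition of stable sets in Section \ref{stable section}, applied to the type-definable $G$. The implication $(3)\Rightarrow(1)$ is immediate, since \th-generic types exist by Fact \ref{facts th-generic}(1) and must be stable under $(3)$. I would carry out $(1)\Rightarrow(3)$ in two steps, first establishing stability of \emph{all} \th-generics, then reducing stability of $G$ itself to this.

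\emph{Step 1.} First I would show that every \th-generic $q\in S(A)$ is stable. Given $d\models q$, I would choose $b\models p$ with $b\thind_A d$ and set $a:=b^{-1}\cdot d$, so $d=b\cdot a$. A Lascar rank computation comparing $\uth(a,b/A)=\uth(b/A)+\uth(a/Ab)$ with $\uth(b,d/A)=2\uth(G)$ (the latter because both $b$ and $d$ are \th-generic and $b\thind_A d$) forces $\uth(a/A)=\uth(G)$ and $a\thind_A b$. Applying Definition \ref{definition th-generic} to $p$ with the generic $b$ and the \th-independent $a$ then yields $d=b\cdot a\thind_A a$. Over $Aa$, the elements $d$ and $b$ are interdefinable (via $b=d\cdot a^{-1}$), so $\tp(d/Aa)$ and $\tp(b/Aa)$ have the same stability; the latter extends the stable type $p$ and is hence stable, and so is the former. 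Applying the contrapositive of Theorem \ref{unstable to stable} gives stability of $\tp(d/A)=q$.

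\emph{Step 2.} Next I would deduce stability of $G$. Suppose not: enlarging $A$ to contain the parameters of an order-property witness (and replacing $p$ by a non-\th-forking extension, which preserves stability and \th-genericity), there would be $\phi(x,y)$ and $A$-indiscernible sequences $\langle a_i\rangle$ in $G$ and $\langle b_j\rangle$ with $\models\phi(a_i,b_j)\iff i<j$. I would pick $c\models p$ with $c\thind_A\{a_i,b_j\}_{i,j}$, set $a'_i:=c\cdot a_i$, and consider $\psi(x,y;z):=\phi(z^{-1}\cdot x,y)$. Then $\models\psi(a'_i,b_j;c)\iff i<j$, and both $\langle a'_i\rangle$ and $\langle(b_j,c)\rangle$ are $A$-indiscernible (since $c$ is \th-independent from $\{a_i,b_j\}$ over $A$), so $\psi$ witnesses OP in $\tp(a'_0/A)$. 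By Fact \ref{facts th-generic}(2), applied to $\tp(c/Aa_0)$ (with $a_0\in\acl(Aa_0)$), followed by Fact \ref{facts th-generic}(3), the type $\tp(a'_0/A)$ is \th-generic; Step 1 then contradicts the OP just exhibited.

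The main obstacle is the rank computation in Step 1 and the accompanying identification of a non-\th-forking configuration allowing an invocation of Theorem \ref{unstable to stable} to descend stability from $\tp(d/Aa)$ to $\tp(d/A)$. Step 2, by contrast, is a routine generic translation arranged so that the resulting OP lies in a \th-generic type. For the ``in particular'' clause, note that in a super-dependent group all $\uth$-ranks are ordinal, and by Corollary \ref{preserving th-rank} a stable type in a super-rosy theory satisfies $\ur=\uth$; thus super-stability of $G$ reduces to plain stability of $G$, which by the main equivalence is equivalent to stability of some \th-generic type.
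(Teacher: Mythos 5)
Your overall architecture is sound but genuinely different from the paper's, and one step would fail at the stated level of generality. The paper proves $(1)\Rightarrow(2)$ in a single pass: take $a\models p$ (the stable \th-generic) and $b\models q$ for an \emph{arbitrary} $q$ extending $G$, with $a\thind_A b$; then $\tp(a\cdot b/Ab)$ is stable and \th-generic (interdefinability with $a$ over $Ab$ plus Fact \ref{facts th-generic}), the defining property of \th-genericity of $p$ gives $a\cdot b\thind_A b$, so Theorem \ref{unstable to stable} pushes stability down to $\tp(a\cdot b/A)$, and then interdefinability of $b$ with $a\cdot b$ over $Aa$ together with $b\thind_A a$ transfers stability to $q$. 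No ranks and no order-property extraction are needed, and $q$ need not be generic. Your route instead proves stability of the \th-generics first and then translates an order-property witness for $G$ into a generic type; both halves cost you something extra.

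The genuine gap is in Step 1: the lemma assumes only that $G$ is rosy and dependent, so $\uth(G)$ need not be ordinal-valued, the Lascar equalities you invoke are then unavailable, and the identity ``\th-generic implies $\uth=\uth(G)$'' (which you use to get $\uth(b,d/A)=2\uth(G)$) is a theorem about \emph{super}-rosy groups and is not among the quoted facts. The computation is also unnecessary: since $q$ is itself \th-generic and $d\thind_A b^{-1}$, Definition \ref{definition th-generic} applied to $q$ gives $a=b^{-1}\cdot d\thind_A b$ directly; with that repair Step 1 becomes essentially the paper's argument specialised to generic $q$. In Step 2 there is a smaller slip: $c\thind_A\{a_i,b_j\}_{i,j}$ does not make $\langle a_i\rangle$ indiscernible over $Ac$, which is what you need for $\langle c\cdot a_i\rangle$ to be $A$-indiscernible; you should first extract $Ac$-indiscernible sequences realising the same EM-type over $A$ (Ramsey) before translating. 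Finally, once Step 1 is run for arbitrary $q$ as in the paper, Step 2 becomes redundant, since stability of every type extending $G$ is equivalent to stability of $G$ by the compactness remark you already cite for $(2)\Leftrightarrow(3)$.
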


\begin{proof}
By definition and compactness a type-definable set in a dependent
theory is stable if and only if every complete type extending it
is stable, implying that (2) is equivalent to (3). The fact that
(2) implies (1) is immediate.

Suppose now that (1) holds. We may assume that $G$ is over $\0$;
let $p(x)$ be a \th-generic stable type extending $G$, and let
$q(x)$ be any type extending $G$. Replacing $p(x)$ with a non-\th-forking
extension we may assume, by Theorem \ref{unstable to stable}, that
$p$ and $q$ are over the same set of parameters, $A$.

Let $a\models p$ and let $b\models q$ be such that $a\thind_A b$.
By Fact \ref{facts th-generic} $\tp(a/A,b)$ is \th-generic and,
since $a$ and $ab$ are interdefinable over $A,b$, $\tp(a\cdot
b/A,b)$ is a stable \th-generic type of $G$ over $A,b$. But
$a\cdot b\thind_A b$ by hypothesis, so using Fact \ref{facts
th-generic} and Theorem \ref{unstable to stable}, we get that
$\tp(a\cdot b/A)$ is a \th-generic stable type of $G$ which
implies that $\tp(a\cdot b/A,a)$ is stable.

Since $a\cdot b$ and $a$ are interdefinable over $Aa$, $\tp(b/Aa)$
is stable; by symmetry $b\thind_A a$ so Theorem \ref{unstable to
stable} implies that $\tp(b/A)$ is a stable type.

\end{proof}

Thus, we have shown that Hrushovki's proof of the group
recognition theorem for locally modular minimal types translates
to minimal stable types in dependent theories. This allows us to
prove that, in fact, the analysis of non-trivial locally modular
types in dependent theories reduces to the same analysis in stable
theories:

\begin{proposition}\label{loc mod}
A stable minimal non trivial locally modular type is seriously
stable.
\end{proposition}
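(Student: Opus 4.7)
The plan is to push $p$ through the Hrushovski--Weil group-configuration machinery developed in this section, extract a definable minimal group $G$ from it, use Lemma \ref{generic stable} to certify that $G$ is stable, and then transfer the resulting stable set back to a witness for $p$ along the chain of almost extensions and interalgebraicities.

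In detail, I would first invoke Lemma \ref{type change} to obtain a type $p_0$, of which $p$ is an almost extension, together with an invertible germ $\sigma\colon p_0\to q_0$ whose type is minimal and stable. Lemma \ref{final type} then upgrades this to a stable minimal type $s$ of composable germs obeying a generic group law, and the Hrushovski--Weil construction (Proposition \ref{ggroup}) produces a type-definable minimal group $G_0$ into which $s$ embeds as a \th-generic. Because $s$ is stable, Lemma \ref{generic stable} forces $G_0$ to be stable. With $G_0$ stable, Hrushovski's stable-theory extension argument (Proposition~1.2 of \cite{Hr4}) applies and yields a definable minimal supergroup $G\supseteq G_0$; since the \th-generic types of $G$ restrict to \th-generic types of $G_0$, a second application of Lemma \ref{generic stable} gives that $G$ is itself a definable, minimal, stable, stably embedded group.

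Finally, I would unwind the chain of reductions. Realisations of $s$ are interalgebraic with realisations of $p_0$, so $p_0$ extends a definable finite cover of the stably embedded stable set $G$, and a finite cover of a stable stably embedded set is again stable stably embedded (any order-property witness in the cover yields, after pigeonholing, an order-property witness below, using the equivalent formulations of Fact \ref{stable types shelah}). Since $p$ is an almost extension of $p_0$, the same finite-quotient argument shows that $p$ extends a definable stable stably embedded set; that is, $p$ is seriously stable. The main obstacle is the stability bootstrap: the object $G_0$ produced by the Hrushovski--Weil construction is a priori only a rosy dependent $\infty$-definable group, and Hrushovski's passage to a definable supergroup needs full stability of $G_0$. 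Lemma \ref{generic stable} is precisely the tool that breaks this impasse, promoting stability of the single \th-generic $s$ to stability of the ambient group, after which the stable-theory toolkit becomes available.
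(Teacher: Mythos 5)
Your route is essentially the paper's: Lemma \ref{type change} to obtain $p_0$ and the germ $\sigma$, Lemma \ref{final type} to get the stable minimal germ type $s$ with a generic group law, Proposition \ref{ggroup} to build $G_0$ and then the definable supergroup $G$, and Lemma \ref{generic stable} to certify stability of $G$ — this is exactly the chain the paper uses, and your explicit remark that Lemma \ref{generic stable} is what unblocks Hrushovski's passage from $G_0$ to $G$ matches the paper's comment that stability of $G_0$ is needed before Proposition~1.2 of \cite{Hr4} can be invoked.

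The one place where your argument goes off is the unwinding step. You assert that ``realisations of $s$ are interalgebraic with realisations of $p_0$,'' but that is not what Lemmas \ref{type change} and \ref{final type} give you. It is $\sigma\models s_0$ that is interalgebraic (over $e$) with $\b'\models p_0$, while $s=\stp(\tau^{-1}\sigma/e)$ is a germ lying in $\acl$ of \emph{two} independent realisations of $p_0$; it has $\uth$-rank $1$ over $e$, but there is no single realisation of $p_0$ with which it is interalgebraic. What you actually have is non-orthogonality: $s\not\perp s_0$ (since $\tau^{-1}\sigma$, $\sigma$, $\tau$ are pairwise dependent) and $s_0$ is interalgebraic with $p_0$, so $s\not\perp p_0\not\perp p$. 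The paper closes the loop precisely here: for stable types forking coincides with \th-forking, so minimality and non-orthogonality yield a definable finite-to-finite correspondence between $p$ and $s$, which transports ``extends a definable stable set'' from $s$ to $p$. Your pigeonhole/finite-cover transfer mechanism is exactly the right tool, you just need to feed into it the correspondence produced by non-orthogonality rather than a spurious interalgebraicity.
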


\begin{proof}
Let $p(x)$ be a stable non trivial locally modular type. Let $p_0$
be as provided by Lemma \ref{type change}, so $p_0$ is minimal and
stable and so is $s$, the type appearing in Lemma \ref{final
type}. Applying Proposition \ref{ggroup} to $s$ we find that there
is (using Lemma \ref{final type} again) a definable minimal group
$G$ and a definable embedding of $s$ into $G$, so $G$ has a
non-algebraic stable type. By Lemma \ref{generic stable} $G$ is
stable, so $s$ is seriously stable (because its embedding in $G$
is). Since orthogonality makes sense (in any theory) between
invariant (over small sets) types, it is clear that $p\not
\perp s$. Because both are stable, forking is equivalent to
\th-forking, so minimality implies that there is a definable
finite-to-finite correspondence between $p$ and $s$. Thus, $s$ is
seriously stable if and only if $p$ is, with the desired
conclusion.
\end{proof}

\begin{remark}
Most of the above translates quite easily to the more general
context of stable stably embedded locally modular regular types in
any theory. In particular, Hrushovski's construction, given a
locally modular regular $p$, of a type-definable group with
generic type domination equivalent to $p$ easily goes through (as
was pointed out to us by Hrushovski, one only has to notice that
all the parameters appearing in the proof can be taken from
$\dcl(P)$ - where $P$ is the set of realisations of $p$). As we do
not have in mind an immediate application of this observation we
do not give the details.
\end{remark}

We are now ready to prove the main result of this section:

\begin{theorem}\label{mainstable}
Let $p\in S(A)$ be a non-trivial stable minimal type. Then $p$ is
seriously stable and seriously minimal.
\end{theorem}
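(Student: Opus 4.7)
The ``seriously minimal'' half is immediate from Proposition~\ref{seriously minimal}. For serious stability, the natural strategy is first to reduce to $A=\acl(A)$ and then split on local modularity. The reduction is by the Open Mapping Theorem (Fact~\ref{1.2.17'}): given a stable minimal formula $\psi$ in the non-forking extension of $p$ to $\acl(A)$, some $\sigma\in p$ is a positive Boolean combination of $A$-conjugates of $\psi$; positive Boolean combinations preserve stability of definable sets, and a finite union of minimal formulas again has $\tho$-rank one, so $\sigma$ witnesses serious stability of $p$.

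The locally modular case is then exactly Proposition~\ref{loc mod}. In the non-locally-modular case, I would adapt Buechler's dichotomy. By Remark~\ref{porism}, after localising $p$ at a generic~$e$, there is a definable family $\phi(x_1,x_2;y_1,y_2)$ of minimal plane curves through $p\times p$ such that any two generic, non-algebraically-related instances intersect only algebraically on $p\times p$. The plan is to take a generic pair $(a_1,a_2)\models p^{\otimes 2}$ and a generic curve $\phi(x_1,x_2;\bar b)$ through it; the type $\stp((a_1,a_2)/\bar b e)$ is then a stable non-forking extension of $p\times p$ (stability being preserved under extensions of stable types), and hence by Lemma~\ref{definability} has a canonical base. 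The intersection property from Remark~\ref{porism} forces this canonical base to be interalgebraic over $e$ with $(a_1,a_2)$, and a $\tho$-rank computation via Lemma~\ref{preserving th-rank 2} and Fact~\ref{frank} then extracts a stable minimal formula in $p$, yielding serious stability.

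The main obstacle is making the canonical base and rank computation precise in the dependent rosy setting, where global stability-theoretic tools (Morley rank, uniform definability, the full machinery of canonical bases) are not available. However, every type appearing in the argument is an extension of $p$ and hence stable, so the local stability theory developed in Section~\ref{stable section}---the existence of canonical bases for stable types (Lemma~\ref{definability}), the coincidence of forking with \th-forking on extensions of stable types (Lemma~\ref{th-forking extensions}), and the preservation of $\tho$-rank under non-forking extensions of stable types (Lemma~\ref{preserving th-rank 2})---furnishes exactly the substitutes needed to run the classical Buechler argument in the present context.
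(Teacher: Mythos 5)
Your decomposition is exactly the paper's: serious minimality from Proposition~\ref{seriously minimal}, reduction to $A=\acl(A)$ via the Open Mapping Theorem, Proposition~\ref{loc mod} for the locally modular case, and an adaptation of Buechler's dichotomy (Pillay's Proposition 2.3.2) via Remark~\ref{porism} otherwise; the toolkit you name (Lemmas~\ref{definability}, \ref{th-forking extensions}, \ref{preserving th-rank 2}) is also the one the paper uses.

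The sketch of the non-locally-modular case, however, has a genuine gap, and the one mechanism you do describe is misstated. First, if $\bar b$ parametrises a generic curve through the pair $\bar a=(a_1,a_2)$, then $\stp(\bar a/\bar b e)$ has $\uth$-rank $1$ and is therefore a \emph{forking} extension of $p^{\otimes 2}$; its canonical base lies in $\acl(\bar b e)$, and since $\bar a\notin\acl(\bar b e)$ it cannot be interalgebraic with $\bar a$ over $e$ --- it is interalgebraic with $\bar b$, which is what almost normality of the family gives. Second, and more importantly, you never say how a \emph{stable formula} in $p$ is actually produced, and that is the entire content of the statement. The paper's route is: for an arbitrary algebraically closed $B\supseteq A$, take $b_1\models p|B$, $b_2\models p|Bb_1$; two rank computations show that any non-algebraic $\bar a\models\phi(\bar x,\bar b)$ lies on a second curve $\phi(\bar x,\bar c)$ with $\bar c\thind_B\bar b$ and $\tp(\bar c/B)=\tp(\bar b/B)$, so by stationarity of $\tp(\bar b/B)$ and the finite-intersection property of Remark~\ref{porism} the non-algebraic completions of $\phi(\bar x,\bar b)$ over $\acl(B\bar b)$ form a fixed finite set. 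Projecting to the first coordinate and pulling back through the Open Mapping Theorem yields $\theta\in p$ with finitely many completions over every such $B$; stability of $\theta$ then follows from the counting characterisation of stability (Fact~\ref{stable types shelah}), and finite Morley rank from $\theta$ having only finitely many generic types. Without this counting argument over arbitrary $B$ (or some substitute for it), ``a $\tho$-rank computation extracts a stable minimal formula'' is a statement of the goal rather than a proof of it.
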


\begin{proof}
Since we already proved the theorem for the case where $p$ is
locally modular, we will assume this is not the case.

The proof is rather close to that of Proposition 2.3.2 in
\cite{Pi}. However, since some care is needed in the usage of both
forking and \th-forking we give the details.

Because $p$ is stable and any extension of $p$ to $\acl(A)$ is
non-forking, it follows (by the Open Mapping Theorem) that the
theorem holds of $p$ if and only if it holds of some (equivalently
any) extension of $p$ to $\acl(A)$. Thus, we may assume without
loss of generality that $A=\acl(A)$ (or, equivalently, that $p$ is
stationary). By Proposition \ref{seriously minimal} we know that
$p$ is seriously minimal so there is some formula $\mu(x)\in p(x)$
of \th-rank 1. It follows from the above discussion and Remark
\ref{porism} that we have a formula $\phi(\bar x, \bar
y)=\phi(x_1,x_2,y_1,y_2)$ over $A$ such that:

\begin{enumerate}
\item Without loss of generality $\phi(\bar x, \bar y)\to
(\mu(x_1)\land \mu(x_2)\land \mu(y_1)\land \mu(y_2))$. In
particular, if $\models \phi(\bar a, \bar b)$ then $\uth(\bar
a/A)\leq \tho(\bar a/A)\leq 2$ and $\uth(\bar b/A)\le \tho(\bar
b/A)\leq 2$.

\item For $\bar c, \bar b\models p\times p$ if $\bar b$ is generic
and $p\cup \{\exists y (\phi(x,y, \bar b)\wedge \phi(x,y, \bar
c))\}$ is a non forking extension of $p(x)$ then $\bar c\in \acl(A
\bar b)$.

\item For any $\bar a, \bar b$ if $\models \phi(\bar a, \bar b)$
then $a_i\in \acl(Aa_{3-i}\bar b)$ and $b_i\in \acl(A\bar a
b_{3-i})$ for $i\in \{1,2\}$. So by (1) we have
\[\uth(\bar b/A\bar a)\leq \tho(\bar b/A\bar a)\leq 1\] and
\[\uth(\bar a/A\bar b)\leq \tho(\bar a/A\bar b)\leq 1.\]
\end{enumerate}

Let $B\supseteq A$ be any algebraically closed set. Fix some
$b_1\models p(x)|B$ and $b_2\models p|Bb_1$ - the definable
extension of $p$ to $Bb_1$ - and denote $\bar b = (b_1,b_2)$. We can
now proceed as in Proposition 2.3.2 of \cite{Pi} and prove that
$\phi(x_1,x_2,\bar b)$ has finitely many non algebraic (global)
extensions and therefore has Morley rank 1.

\medskip

\noindent {\bf Claim I.} \emph{ If $\a \models \phi(\x, \bar b)$
and $\bar a\not\in \acl(B\bar b)$ then $\uth(\bar a/B)=2$ and
$\uth(\bar b/B\bar a)=1$.}

\begin{proof}
By assumptions we know that $\uth(\bar a/B\bar b)\geq 1$ and by
previous observations $\uth(\a/B\b)\leq 1$ so $\uth(\bar
a/B\bar b)=1$ and, being stable, this implies that $\ur(\bar
a/B\bar b)=1$. By assumption $\ur(\bar b/B)=2$ while $\ur(\b/B\a)\le 1$ so $\bar b \nind_B
\a$. Since $\tp(\bar b/B)$ is stable, this implies $\bar b\nthind_B
\bar a$ so
\[
2\geq \uth(\bar a/B)\geq \uth(\bar a/B\bar b)+1 \geq 2.
\]
This proves the first part of the claim. For the second part,
using Lascar's inequalities:
\[
\uth(\bar b/B)+\uth(\bar a/B\bar b)= \uth(\bar a\bar
b/B)=\uth(\bar a/B)+\uth(\bar b/B\bar a)
\]
But $\uth(\bar b/B)+\uth(\bar a/B\bar b)=3$ and $\uth(\bar a/B)=2$
so $\uth(\bar b/B\bar a)=1$.
\end{proof}

\medskip
\noindent {\bf Claim II.} \emph{ Suppose that $\bar a \models
\phi(x,\bar b)$, $\bar a \notin \acl(B\bar b)$ and let $\bar
c\models \tp(\bar b/B\bar a)|{B\bar b\bar a}$  (so that $\models
\phi(\bar a,\bar b)\wedge \phi(\bar a, \bar c)$). Then $\bar
c\thind_B\bar b$.}

\begin{proof}
Notice that  $\bar c\notin\acl (B\bar b)$ so we know $\a \in
\acl(B\b\bar c)$. On the other hand, by Claim I:
\[
\uth(\bar a \bar b \bar c/B)= \uth(\bar a/B)+\uth(\bar b/B\bar
a)+\uth(\bar c/B\bar a\bar b)=2+1+1
\]
so
\[
4= \uth(\bar c/B)+\uth(\bar b/B\bar c)+\uth(\bar a/B\bar b\bar
c)=2+\uth(\bar b/B\bar c)+0.
\]
whence $\uth(\bar b/B\bar c)=2$ as required.
\end{proof}

\noindent Let $\bar c\models\tp(\bar b/B)|{B \bar b}$  and let
\[P_{\bar c}:=\{ \tp(\bar a/\acl(B\bar b)) :\, \models \phi(\bar a,
\bar b)\wedge \phi(\bar a, \bar c)\}. \] Notice that $P_{\bar c}$
is finite by assumption, and by stationarity of $\tp(\bar b/B)$
independent of the choice of $\bar c$, so will be denoted $P$.

Now let $\bar a\models \phi(\bar x, \bar b)$ be such that $\bar a
\notin \acl(B\bar b)$ and $\bar d\models \tp(\bar b/B\bar a)|B\bar
b\bar a$. We get that $\tp(\bar d/B)=\tp(\bar c/B)$ and the last
claim assures that $\bar d\thind_B \bar b$. By stationarity
$\tp(\bar d/B\bar b)=\tp(\bar c/B\bar b)$ so $\bar a\models
\phi(\bar x,\bar b)\land \phi(\bar x,\bar d)$. Because $P$ does
not depend on the choice of $\bar c$ we get that $\tp(\a/B\b)\in
P$. Since $\a$ was arbitrary this shows that for any global
\th-minimal type $q(\bar x)\supseteq \{\phi(\bar x, \bar b)\}$,
the restriction $q\upharpoonright {\acl(B\bar b)}$ is in $P$.

So $\phi(\x,\b)$ has finitely many non-algebraic extensions to
$\acl(B\bar b)$. It will be enough, of course, to show that
$\phi(\x,\b)$ defines a stable set and has Morley rank 1 (because
then, obviously, so will be its projection on any of the
coordinates).

If we write $\phi(\bar x,\bar b)$ as $\phi(x,y,\bar b)$ and
$\psi(x,\bar b):=\exists y \phi(x,y,\bar b)$ then $\psi(x,\bar
b)\in p|{B\bar b}$ and $\psi(x,b)$ has finitely many extensions to
$\acl(B\bar b)$. Since $\psi(x,\b)\in p|B\b)$ it follows that
there is some $\theta(x)\in p$ which is a finite positive Boolean
combination of $A$-translates of $\psi(x,\b)$, namely
$\theta(x)\equiv \bigvee_l \bigwedge_k \psi(x,\bar b_{l,k})$. By
what we have shown, each of $\psi(x,\bar b_{l,k})$ has finitely
many completions in $\acl(B\b_{l,k})$ so $\theta(x)$ has finitely
many completions in $B$. Since $B$ was arbitrary, \ref{stable types} implies that
$\theta(x)$ is stable. Since it only has finitely many generic
types, it must be of finite Morley rank.
\end{proof}

\section{minimal types in theories interpretable in o-minimal
structures}\label{in o-minimal}

We can now collect the results of Section \ref{geometry-section}, together
with the main results of \cite{HaOn} to give our (incomplete)
version of Theorem \ref{trichotomy} for structures
interpretable in o-minimal theories:

\begin{theorem}\label{main}
 Let $\CM$ be a structure definable in an o-minimal theory.
 Assume $\CM$ is $\aleph_1$-saturated and let $p(x)\in S_1(M)$
 be a minimal non-trivial type then:
\begin{itemize}
\item If $p(x)$ is unstable it is it is almost o-minimal.
Moreover, if $p(x)$ is rich it has an almost extension extending a
real closed field. Otherwise, any o-minimal almost extension of
$p(x)$ is definable in an ordered vector space over an ordered
division ring. \item If $p(x)$ is stable it is seriously stable.
In particular, if it is not locally modular it is strongly
minimal. Otherwise there exists a stable minimal type-definable
group $G$ acting regularly on some minimal $s\not \perp p$.
\end{itemize}
\end{theorem}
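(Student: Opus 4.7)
The plan is to treat the unstable and stable cases separately, in each case combining the structural results already established (Theorem \ref{unstable to stable} and Theorem \ref{mainstable}) with the Peterzil-Starchenko trichotomy (Theorem \ref{trichotomy}) and the classical Buechler/Hrushovski analysis inside stable theories.

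For the unstable branch, the first step is to show that $p$ is almost o-minimal. Theorem 2 of \cite{HaOn} supplies a dichotomy for unstable types in a structure interpretable in an o-minimal theory: such a type has either an almost o-minimal extension or a non-algebraic stable almost extension. Theorem \ref{unstable to stable} rules out the second option in the present context, since any stable extension of an unstable type must \th-fork, and $p$ being \th-minimal forces every \th-forking extension to be algebraic; so $p$ must admit an almost o-minimal extension. Thus $p$ is almost o-minimal, and its o-minimal almost extension is a non-trivial $1$-type in an o-minimal structure. I would then apply Theorem \ref{trichotomy} together with Remark \ref{rich}: if $p$ is rich, the corresponding o-minimal type is of type (Z3) and extends a real closed field; otherwise the type is locally modular of type (Z2) and is definable in a reduct of an ordered vector space over an ordered division ring.

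For the stable branch, Theorem \ref{mainstable} immediately gives that $p$ is seriously stable and seriously minimal, so it extends a definable stable stably embedded minimal set $D$. Inside the induced (genuinely stable) structure on $D$ the classical dichotomies apply. If $p$ is not locally modular, the ordinary Buechler dichotomy for minimal sets in stable theories — which is available once we are in a stable stably embedded environment and $p$ has $\ur$-rank $1$ by Corollary \ref{preserving th-rank} — yields that $p$ is strongly minimal. If $p$ is locally modular, Lemma \ref{type change} and Lemma \ref{final type} (applied exactly as in the proof of Proposition \ref{loc mod}) produce a minimal stable type $s \not\perp p$ together with a generic binary operation satisfying the hypotheses of Proposition \ref{ggroup}; the proposition then delivers a definable minimal group $G$ into which $s$ embeds generically, and Lemma \ref{generic stable} shows that $G$ is stable. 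The action of $G$ on itself by translation is regular on the generic, hence provides the required regular action on $s$.

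The main subtlety lies in the unstable branch, where one has to align the \emph{richness} condition used throughout this paper with the geometric dichotomy between types (Z2) and (Z3) in the Peterzil-Starchenko classification; this is precisely what Remark \ref{rich} is designed to do, and is the reason the hypothesis $\aleph_1$-saturation appears. A secondary (but routine) point is verifying that passing between $p$ and its almost extension preserves richness and local modularity, which follows from the fact that a finite-to-one correspondence carries a (nearly) normal definable family of plane curves to an interdefinable one; no new geometric content is lost in the quotient.
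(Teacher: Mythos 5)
Your overall decomposition matches the paper's: treat the unstable and stable branches separately, use Theorem 2 of \cite{HaOn} plus Theorem \ref{unstable to stable} to get almost o-minimality (this is exactly Proposition \ref{o-min}), then feed the o-minimal picture into the Peterzil--Starchenko trichotomy, and in the stable branch invoke Theorem \ref{mainstable} together with the classical Buechler/Hrushovski analysis. The stable branch of your argument is essentially the one the paper runs (the paper simply cites Theorem 5.1.1 of \cite{Pi} once serious stability and stable embeddedness are in hand, rather than re-running Lemmas \ref{type change}--\ref{final type} and Proposition \ref{ggroup}; both are fine).

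However, there is a genuine gap in the unstable branch. You write that the only remaining point is the ``secondary (but routine)'' verification that passing between $p$ and its almost o-minimal extension $q/\sim$ preserves richness and local modularity via the finite-to-one correspondence. That is indeed routine, but it is not the real obstacle. The real issue, which the paper isolates explicitly and proves as Lemma \ref{embedded}, is a \emph{parameter} issue: the richness (and already the non-triviality) of $p$ as a type in the ambient structure $\CN$ is a priori witnessed by $\CN$-definable families of curves whose parameters may lie \emph{outside} the o-minimal set $Z$ on which $q/\sim$ lives. To apply Remark \ref{rich} and the Peterzil--Starchenko trichotomy to $q/\sim$ you must produce a family definable \emph{in the induced structure $\mathcal Z$}, i.e.\ with parameters that can be taken inside $Z$. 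Conflating ``rich over $\CN$'' with ``rich over $\mathcal Z$'' is exactly the step Lemma \ref{embedded} is needed for, and its proof is not trivial: it requires the coordinatisation machinery of Theorem \ref{coordinatization} to trade an external parameter for points of $Z$ interalgebraic with it (following the Peterzil--Starchenko argument for 1-types in o-minimal structures). Without this step, Remark \ref{rich} cannot be applied to the o-minimal type, and the conclusion in the rich case (real closed field) and the non-rich case (ordered vector space) does not follow. You should add this lemma (or the argument behind it) to complete the proof.
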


Recall that in \cite{HaOnPe} we conjectured that Zilber's Principle (asserting,
roughly, that rich structures must arise from definable fields) holds in any
geometric structure interpretable in an o-minimal theory. Restricting ourselves
to structures \emph{definable} in o-minimal theories, our theorem reduces the
problem to Zilber's Trichotomy for strongly minimal structures definable in
o-minimal theories:

\begin{conjecture}[Zilber's Trichotomy conjecture]
 Let $\CN$ be a strongly minimal theory interpretable in an o-minimal
 structure. $\CN$ is not locally modular if and only if it interprets
 an algebraically closed field.
\end{conjecture}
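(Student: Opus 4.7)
The easy direction is routine: an algebraically closed field is strongly minimal and not locally modular, and both properties transfer under interpretation. For the converse, let $\CN = D/E$ be a strongly minimal non locally modular structure, with $D$ and $E$ definable in the o-minimal $\CM$. The plan is to transport enough of the geometric richness of $\CN$ into $\CM$ so as to apply the o-minimal trichotomy (Theorem \ref{trichotomy}) inside $\CM$, obtain a definable real closed field $R$, and then identify an algebraically closed field inside $R$ witnessed by $\CN$.

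By Remark \ref{porism} applied to the generic type $p$ of $\CN$, there is a definable family $\{C_t : t \in T\}$ of plane curves in $\CN^2$, almost normal at $p$ and with $\dim T \ge 2$, satisfying the pseudoplane incidence data: two generic points lie on infinitely many common curves and two generic curves meet in only finitely many points. Choosing $\CM$-definable lifts of the $C_t$ to subsets of $D^2$, the whole incidence configuration becomes $\CM$-interpretable. The first main step is to show that the generic $\CM$-type $q$ of $T$ is non locally modular. For this one argues that a locally modular 1-type in $\CM$ cannot host a 2-dimensional $\CM$-definable pseudoplane through a generic point, whereas the $\CN$-side family provides precisely such a configuration once one verifies that the collapse of $\CM$-dimension through $D\to \CN$ can be controlled on a generic fibre (using the coordinatisation theorem \ref{coordinatization} to reduce to the behaviour of a single $\th$-minimal fibre type).

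Granting the non local modularity of $q$, Theorem \ref{trichotomy} produces an $\CM$-definable real closed field $R$ in $\dcl^{\CM}(q)$; the vector-space alternative is ruled out because the pseudoplane incidence, transferred into an ordered vector space reduct, would force the vector space to interpret a field anyway. It remains to upgrade $R$ to an algebraically closed field. Since $\CN$ is now interpretable in an expansion of $R$ and is stable and strongly minimal, by Peterzil-Starchenko's analysis of stable structures interpretable in o-minimal expansions of real closed fields $\CN$ should be in definable finite-to-finite correspondence with an irreducible algebraic curve over $R[\sqrt{-1}]$, so that $\CN$ interprets the algebraically closed field $R[\sqrt{-1}]$.

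The chief obstacle — and the reason the conjecture remains open — is in the first main step. The map $D \to \CN$ may collapse $\CM$-dimension in the fibres, an $\CN$-normal family of curves need not be $\CM$-normal (forcing a delicate passage to an $\CM$-dimension-correct variant of the family), and the pseudoplane incidence on the $\CM$-side must be witnessed without recourse to the stability-theoretic tools used to build it on the $\CN$-side. Controlling this essentially amounts to equipping the $\CM$-topological closure of $\{C_t\}_{t \in T}$ with the axioms of a Hrushovski-Zilber Zariski geometry compatible with the ambient o-minimal topology; a proof along these lines would in effect replace the direct appeal to Theorem \ref{trichotomy} with the Hrushovski-Zilber trichotomy applied to this induced Zariski structure, and this replacement is where the bulk of the technical work would have to lie.
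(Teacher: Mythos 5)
This statement is not proved in the paper, and cannot be reviewed against a proof there: it is stated explicitly as a \emph{conjecture}, and the whole point of Section \ref{in o-minimal} is that Theorem \ref{main} \emph{reduces} the stable half of the paper's main theorem to it. Your proposal, to its credit, admits in its last paragraph that its ``first main step'' is unresolved; but that step --- transferring the non-local-modularity of the generic type of $\CN=D/E$ to an $\CM$-definable configuration on which the o-minimal trichotomy can be applied --- is precisely the open content of the conjecture, not a technical detail to be ``controlled.'' The quotient map $D\to\CN$ can collapse $\CM$-dimension arbitrarily, an $\CN$-(almost) normal family of curves need not lift to anything $\CM$-normal, and there is no available argument that a locally modular $\CM$-type ``cannot host'' the lifted incidence system; nothing in Remark \ref{porism}, Theorem \ref{coordinatization}, or Theorem \ref{trichotomy} bridges this. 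Note also that Theorem \ref{trichotomy} as stated concerns global $1$-types of the o-minimal theory, while your type $q$ lives on a parameter space $T$ of dimension at least $2$, so even the appeal to the trichotomy is not literally licensed, and the claim that ``the vector-space alternative is ruled out'' is unargued.

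There is a second, independent gap: your closing step invokes a purported theorem of Peterzil--Starchenko to the effect that a stable strongly minimal structure interpretable in an o-minimal expansion of a real closed field $R$ is in definable finite-to-finite correspondence with an algebraic curve over $R[\sqrt{-1}]$. No such theorem exists in the literature the paper relies on; it is essentially the conjecture itself specialised to o-minimal expansions of real closed fields, so the argument is circular at exactly the point where it would need to produce the algebraically closed field. (The easy direction --- that interpreting an algebraically closed field forces non-local-modularity --- is fine, via preservation of $1$-basedness under interpretation.) In short, the proposal is a plausible research programme, closely related to the Zariski-geometry strategy you mention, but it is not a proof, and the paper deliberately leaves the statement open.
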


\medskip

We start the proof of the theorem with an easy corollary to the
work done until now, answering a question of our own (see
\cite{HaOn}).

\begin{proposition}\label{o-min}
Let $p$ be an unstable minimal type definable in a theory
interpretable in an o-minimal structure. Then $p$ is almost
o-minimal, namely there is o-minimal almost extension of $p$.
\end{proposition}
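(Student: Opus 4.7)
The plan is to combine Theorem \ref{unstable to stable} with the dichotomy for unstable types from \cite{HaOn} and exploit the \th-minimality of $p$. First I would recall that, by the reminder preceding the statement, a theory interpretable in an o-minimal structure is super-rosy of finite $\uth$-rank, so in particular dependent and rosy; thus Theorem \ref{unstable to stable} applies. Since $p$ is \th-minimal we have $\uth(p)=1$, so every non-algebraic extension $q$ of $p$ satisfies $\uth(q)\ge 1$ and therefore does not \th-fork over the base of $p$.

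Next I would show that every non-algebraic extension of $p$ is unstable. Suppose towards a contradiction that $q\supseteq p$ is a non-algebraic stable extension. Write $p=p(x)\in S(A)$ and $q=q(x)\in S(Aa)$ for some parameters $a$. By Theorem \ref{unstable to stable}, since $q$ is stable and its restriction $p$ is unstable, $q$ must \th-fork over $A$. But this contradicts the previous paragraph, where non-algebraicity of $q$ forced non-\th-forking. Hence no such stable non-algebraic extension exists, and every non-algebraic extension of $p$ is unstable.

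Finally I would invoke Theorem 2 of \cite{HaOn}, quoted in the introduction: in a structure definable in an o-minimal theory, an unstable type has either an almost o-minimal extension or a hereditarily stable (non-algebraic) extension. The second alternative has just been ruled out for $p$, so $p$ itself must have an o-minimal almost extension. Combined with $\uth(p)=1$, this is by definition the statement that $p$ is almost o-minimal.

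There is essentially no obstacle; the only point that requires a little care is to make sure that Theorem \ref{unstable to stable} is being applied to the correct pair (the extension $q$ over $Aa$ together with its restriction to $A$), and that ``non-algebraic'' in the sense of \cite{HaOn} is indeed equivalent to ``$\uth\ge 1$'' here, which is immediate because $\uth$-rank $0$ types are exactly the algebraic ones.
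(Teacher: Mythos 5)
Your proof is correct and takes essentially the same route as the paper's: combine the dichotomy from Theorem 2 of \cite{HaOn} with Theorem \ref{unstable to stable}, and use $\uth(p)=1$ to rule out a non-algebraic \th-forking (hence stable) extension. The paper's version is just more condensed, asserting directly that a minimal type has no non-algebraic \th-forking extensions rather than spelling out the rank computation.
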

\begin{proof}
By Theorem 2 of \cite{HaOn} $p$ is either almost o-minimal or has
a stable non-algebraic extension. By Theorem \ref{unstable to
stable} any stable extension of $p$ is a \th-forking one. Since
$p$ is minimal, and therefore has no non-algebraic \th-forking
extensions, the proposition is proved.
\end{proof}

The last proposition provides us, for a minimal unstable type, with an
o-minimal almost extension of $q$, but as the richness (or non-triviality)
of $q$ may need outer parameters to be witnesses, does not a priori give
information about the geometry of the o-minimal structure of which $q$
is generic. For that purpose we need the following:

\begin{lemma}\label{embedded}
 Let $\CN$ be a super-rosy structure of finite \th-rank. Let $Z$ be an
 $\CN$-definable set such that the structure $\mathcal Z$ induced on
 $Z$ is o-minimal. Let $p$ be any complete type extending $Z$. Then
 $p$ is rich (non-trivial) as a type in $\CN$ if and only if it is
 rich (non-trivial) as a type in $\mathcal Z$.
\end{lemma}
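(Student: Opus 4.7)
The plan is to derive both equivalences from a single underlying comparison of algebraic closures on $Z$ computed in $\CN$ versus in $\mathcal Z$, together with the corresponding comparison of definable families of plane curves. The underlying principle is that by construction of the induced structure, every $\CN$-definable subset of $Z^n$ (with arbitrary $\CN$-parameters) is the interpretation of a basic predicate of $\mathcal Z$, while stable embeddedness of $Z$ in $\CN$---which I would establish from the o-minimality of $\mathcal Z$ via canonical parameters in $\mathcal Z^{eq}$---ensures that such subsets are actually $\CN$-definable using parameters from $Z$ alone. The outcome is the comparison
\[
\acl^{\CN}(A,\bar a)\cap Z \;=\; \acl^{\mathcal Z}(A,\bar a)
\]
for $A\subseteq Z$ and $\bar a\in Z^k$. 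The $\supseteq$ direction is immediate: a finite $\CN$-$(A,\bar a)$-definable subset of $Z$ is obtained from a basic predicate of $\mathcal Z$ applied to parameters in $A\cup\{\bar a\}$. For $\subseteq$, a finite $\mathcal Z$-$(A,\bar a)$-definable set translates, upon expansion of basic predicates, into an $\CN$-definable set with only the same $Z$-parameters, the outside-$Z$ data being absorbed into the predicate itself.

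Granting the comparison, non-triviality transfers as follows. Given $\CN$-non-triviality witnessed by $(A,a_1,\dots,a_n,b)$, where $A$ may a priori contain parameters outside $Z$, stable embeddedness lets me replace the outside-$Z$ parameters by canonical codes in $\mathcal Z^{eq}$ (and, after naming a small base, by tuples in $Z$), producing $A'\subseteq Z$ with $b\in \acl^{\mathcal Z}(A',\bar a)\setminus\bigcup_i\acl^{\mathcal Z}(A',a_i)$. Conversely, any $\mathcal Z$-witness is at once a $\CN$-witness via the inclusion of $\acl^{\mathcal Z}$ in $\acl^{\CN}$ on $Z$. For richness, an $\CN$-definable family of plane curves $\{\theta_c:c\models\psi\}$ on $Z$ is encoded by $\Theta=\{(x,y,c):\psi(c)\wedge\theta_c(x,y)\}$, which after canonical reparametrisation of $c$ is a subset of some $Z^{2+m}$, hence a basic predicate of $\mathcal Z$; the family is therefore $\mathcal Z$-definable, and vice versa. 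The conditions ``almost normal at $p$'' and ``$\{f\in\CF:(b,b)\in f\}$ is infinite'' are about finite intersections and cardinalities of $\CN$-definable subsets of $Z^n$, preserved by the translation. The independence clause $b\thind_{A_0}A$ transfers because $\tho$-rank of a formula on $Z$ depends only on the lattice of $\CN$-definable subsets of $Z^n$, which agrees with the lattice of $\mathcal Z$-definable ones; the same lattice yields the same \th-rank computations in either structure.

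The main obstacle is establishing the stable embeddedness of $Z$ rigorously. The crucial input is the o-minimality of $\mathcal Z$: canonical parameters of $\CN$-definable subsets of $Z^n$ lie a priori in $\CN^{eq}$, but by elimination of imaginaries available in the o-minimal $\mathcal Z$ (working at worst in $\mathcal Z^{eq}$ and using the linear order on $Z$), they are interdefinable with tuples from $Z$. This is what permits the replacement of outside-$Z$ parameters by $Z$-parameters in the non-triviality argument and the reparametrisation of families of plane curves in the richness argument.
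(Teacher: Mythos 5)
Your proposal hinges on establishing that $Z$ is stably embedded in $\CN$, and this is precisely what the paper's authors explicitly do not know how to prove: the Remark immediately following Lemma \ref{embedded} states it is ``not clear\dots whether Lemma \ref{embedded} can be strengthened to\dots assure that a closed interval in an o-minimal set definable in a super-rosy theory of finite rank is stably embedded.'' Moreover, the route you sketch to stable embeddedness is circular. You invoke elimination of imaginaries for the o-minimal $\mathcal Z$ to conclude that the canonical parameter of an $\CN$-definable $X\subseteq Z^n$ is coded by a tuple from $Z$; but o-minimal EI applies to $\mathcal Z$-definable families, and what needs proving is exactly that the $\CN$-definable family $\{X_{e'}: e'\models \tp^{\CN}(e)\}$ (with $e$ a priori outside $Z$) can be reparametrised $\mathcal Z$-definably over $Z$-parameters --- that is, stable embeddedness itself. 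Until you know that, there is no $\mathcal Z$-definable family to which EI can be applied. The claimed identity $\acl^{\CN}(A,\bar a)\cap Z=\acl^{\mathcal Z}(A,\bar a)$ has the same tension: if $\mathcal Z$ carries predicates for all $\CN$-definable subsets of $Z^n$ with arbitrary parameters, then the $\supseteq$ inclusion already fails over $\emptyset$ (finite sets defined over outside parameters are baked into the language), while if $\mathcal Z$ carries only $\emptyset$-$\CN$-definable predicates then ``every $\CN$-definable subset of $Z^n$ is a basic predicate'' is not available without stable embeddedness.

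The paper's proof avoids the global question entirely and proves only the single instance needed: for a normal $\CN$-definable family of plane curves in $Z^2$ whose parameter has $\tho$-rank $n$, it applies Theorem \ref{coordinatization} to that parameter to get a chain $a_1,\dots,a_n$ of rank-$1$ steps over a base $B$, and then, from $d\in\acl(c,a_1,\dots,a_n)\setminus\acl(c,a_1,\dots,a_{n-1})$ for a generic $(c,d)\in Z^2$ on the curve, exchanges to obtain $a_n\in\acl(c,d,a_1,\dots,a_{n-1})$ and inducts downward to replace the outside parameter by a tuple of elements of $Z$. This is a local, dimension-counting argument (after Peterzil--Starchenko, Lemma 2.3 of \cite{PeSt}) tailored to families of plane curves of the right rank; it is deliberately weaker than stable embeddedness, and only the direction $\CN$-rich $\Rightarrow$ $\mathcal Z$-rich requires work. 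You would need to find a genuinely new argument for stable embeddedness --- answering the paper's open question --- for your route to go through; the EI sketch as given does not do it.
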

\begin{proof}
We only need to prove the left to right direction. Let $C(x,y;\bar
z)$ be any normal $\CN$-definable family of plane curves in $Z^2$.
It will suffice to show that if $\tho((\exists^{\infty}
x,y)C(x,y;\bar z))=n$ then there exists a $\mathcal Z$-definable
subfamily of $C(x,y,;\bar z)$ of o-minimal dimension $n$. We may
assume (otherwise the claim is trivial) that $n>0$ and that if
$(\exists^{\infty} x,y)C(x,y;\a)$ and $\uth(a/\0)=n$ then $C(x,y)$
is not an almost constant curve (namely, for generic $(c,d)\models
C(x,y;\a)$ both $d\thind \a$ and $c\thind \a)$.

We may assume that $C(x,y;\bar z)$ is $\0$-definable (in $\mathcal
Z$); let $\bar a$ be such that $(\exists^{\infty}x,y)C(x,y;\a)$
and $\tho(a/\0)=n$. By coordinatisation there exists some
$B\subseteq N$ and $a_1,\dots,a_n$ such that
\begin{itemize}
 \item $a\thind B$,
\item $\uth(a_1/B)=1$,
\item $\uth(a_{i+1}/Ba_1,\dots,a_i)=1$ for all $i\ge 1$ and
\item $a_n=\a$.
\end{itemize}
We may also assume that all the $\CN$-definable subsets of $Z$
defined over $B$ are $\mathcal Z$-atomic, so without loss of
generality $B=\0$ as well. Let $(c,d)\models C(x,y,\a)$ for some
generic $c\in Z$, so $d\in Z$ and $d\in \acl(c,a_1,\dots,a_n)$.
Adding constants to the language, we may minimise $n$ to assure
that $d\notin \acl(c,a_1,\dots,a_{n-1})$ so - as
$\uth(a_n/a_1,\dots,a_{n-1})=1$ and $a_n\thind_{a_1,\dots,a_{n-1}}
c$ - we conclude that $a_n\in
\acl(c,d,a_1\dots,a_{n-1})$. By induction on $n$, it follows that
we can find $\bar c, \bar d\subseteq Z$ such that $C(x,y;\a)$ is
definable over $\bar c, \bar d$. Since the same is true of any
$a'\equiv a$, we get the desired conclusion.
\end{proof}

\begin{remark}
The proof of the previous lemma is based on the proof of Lemma 2.3
of \cite{PeSt}. It is not clear to us, however, whether Lemma
\ref{embedded} can be strengthened to give the full result of
Peterzil and Starchenko and assure that a closed interval in an
o-minimal set definable in a super-rosy theory of finite rank is
stably embedded.
\end{remark}

We can now proceed to proving the Theorem \ref{main}:

\medskip

Assume first that $p$ is locally modular. If it is stable
then by Theorem \ref{mainstable} it is seriously stable and
seriously minimal, and the result now follows from the analogous
statement in the stable context (see Theorem 5.1.1 of \cite{Pi})
and the fact that stable sets in dependent theories are stably
embedded. So we may assume that $p$ is unstable.

By Remark \ref{easy facts}(4) every non-algebraic extension of $p$
is non-trivial. In particular, $p$ has an almost o-minimal
extension $q$ (by Proposition \ref{o-min}). Being non-trivial, so
is every non-algebraic almost extension of $q$. So there is a
finite equivalence relation $\sim$ such that $q/\sim$ is a generic
type in some definable o-minimal set $\mathcal Z$. By Lemma
\ref{embedded} $q/\sim$ is non-trivial, and since $p$ were not
rich, neither is $q/\sim$. The result now follows from the
Trichotomy theorem for o-minimal structures.

\medskip

Now, let $p$ be a non locally modular type. If $p$ is stable then
by Theorem \ref{mainstable} it is seriously stable and by
Buechler's dichotomy it is strongly minimal. So we may assume that
$p$ is unstable. In that case, let $p_0:=p\upharpoonright
\acl(\0)$ and $a\models p$ any realization. By Proposition
\ref{o-min}, $p_0$ has an almost o-minimal extension $q_0$. Using
automorphisms we may assume that $a\models q_0$. Because
$p\supseteq q_0$ it must be that $q_0$ is not locally modular. Let
$\sim$ be a definable equivalence relation with finite classes
such that $q:=q_0/\sim$ is o-minimal. Since $q_0$ is not locally
modular neither is $q$, and it is not locally modular also as a an
o-minimal type. By Remark \ref{rich} $q$ is rich (as an o-minimal
type). So by the Trichotomy Theorem for o-minimal structures there
is a definable real closed field, of which $q$ is an extension.
Thus, $p/\sim$ is an almost extension of $p$ extending a definable
real closed field, as required.

\medskip

\noindent This finishes the proof of Theorem \ref{main}.\qed

\medskip

We can now sum up our findings by:

\begin{corollary}
Let $\CM$ be a structure definable in an o-minimal theory. Then:
\begin{enumerate}
\item  $\CM$ interprets a real closed field if and only if there
is a minimal non-locally modular (non trivial) global type.

\item Assuming Zilber's Trichotomy conjecture $\CM$ interprets a
pure algebraically closed field if and only if it has a minimal
non-locally modular stable type.
\end{enumerate}
\end{corollary}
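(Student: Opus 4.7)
Both forward directions are immediate: the generic $1$-type of an interpreted real closed field (respectively, a pure algebraically closed field) over $\CM$ is minimal, non-trivial and non-locally-modular -- and, in the ACF case, stable -- providing the required global type. For the converse directions, since field interpretability is elementary, I would first pass to an $\aleph_1$-saturated elementary extension and then invoke Theorem \ref{main}.

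For the converse of (1), let $p$ be a minimal, non-trivial, non-locally-modular global $1$-type. If $p$ is unstable, Theorem \ref{main} says $p$ is almost o-minimal; let $q$ be an o-minimal almost extension of $p$. By Remark \ref{easy facts}(4) $q$ inherits non-triviality from $p$, and non-local-modularity likewise transfers to $q$. Remark \ref{rich} then identifies $q$ as rich, so the \emph{rich} clause of Theorem \ref{main} supplies an almost extension of $p$ that extends a definable real closed field, which is thereby interpretable in $\CM$.

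For the converse of (2), assume Zilber's Trichotomy conjecture and let $p$ be a minimal non-locally-modular stable type. Theorem \ref{mainstable} (via Theorem \ref{main}) gives that $p$ is seriously stable and strongly minimal; hence $p$ is generic on a definable strongly minimal non-locally-modular set $S$. Since $\CM$ is itself definable in an o-minimal theory, $S$ is a strongly minimal non-locally-modular set interpretable in an o-minimal structure, and Zilber's conjecture then delivers a pure algebraically closed field interpretable in $S$, and so in $\CM$. The same chain -- strongly minimal structure yielding a field through Zilber, together with the RCF sitting naturally inside the ambient o-minimal structure -- handles any stable sub-case of (1) in which a stable minimal non-locally-modular type is produced.

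The main obstacle lies in the unstable case of (1): transferring richness and non-local-modularity from the $\CM$-type $p$ to its o-minimal almost extension $q$, so that the Peterzil-Starchenko trichotomy (Theorem \ref{trichotomy}) applies, and then recovering the resulting RCF as an object interpretable within $\CM$ itself (rather than only in the ambient o-minimal structure). The key auxiliary tool here is Lemma \ref{embedded}, which equates richness of types in $\CM$ with richness of the induced o-minimal types, allowing Theorem \ref{trichotomy} to deliver the RCF we need.
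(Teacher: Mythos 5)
Your forward directions and the unstable sub-case of the converse of (1) are essentially correct and match the paper's approach (via Theorem~\ref{main}, Lemma~\ref{embedded}, Remark~\ref{rich} and the Peterzil--Starchenko trichotomy), although the appeal back to ``the rich clause of Theorem~\ref{main}'' is mildly circular: what you have established is that the o-minimal almost-reduct $q$ is rich \emph{as an o-minimal type}, not that $p$ itself is rich, and the paper's proof of Theorem~\ref{main} accordingly applies Theorem~\ref{trichotomy} to $q$ directly rather than looping back to the rich clause.

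The serious gap is in your treatment of the stable sub-case of (1). You write that the chain ``strongly minimal $\Rightarrow$ Zilber $\Rightarrow$ field, together with the RCF sitting naturally inside the ambient o-minimal structure'' handles the stable case of (1). This cannot work: Zilber's Trichotomy conjecture is an extra hypothesis present only in part (2) of the corollary, so you are not entitled to invoke it in (1); and even granting an ACF from Zilber, the phrase ``the RCF sitting inside the ambient o-minimal structure'' does not produce an RCF \emph{interpretable in $\CM$} --- it is interpretable in the larger o-minimal structure, which is not what the corollary asserts. Worse, this sub-case cannot be repaired at all: take $\CM$ to be a pure algebraically closed field of characteristic zero, which is definable in a real closed field and hence in an o-minimal theory. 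Its generic $1$-type is a minimal, non-trivial, non-locally-modular global type, yet $\CM$ is stable and therefore cannot interpret any real closed field. So the stable case of (1) does not produce an RCF; one must assume the witnessing type is unstable (as the forward direction of (1) indeed gives), and what Theorem~\ref{main} supplies in the stable non-locally-modular case is only strong minimality, feeding into part (2). Finally, in part (2) the conjecture as quoted in the paper produces an algebraically closed field, not a \emph{pure} one; your argument should say where the purity comes from (e.g.\ from the stable embeddedness established in Theorem~\ref{mainstable} and the standard facts about minimal groups of finite Morley rank).
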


 \bibliographystyle{abbrv}
 \bibliography{all}
\end{document}